\documentclass[11pt,a4paper]{amsart}
\usepackage{amssymb,amsfonts,amsmath,mathrsfs}
\usepackage[left=2cm,right=2cm,top=2cm,height=250mm]{geometry}
\usepackage{color}
\usepackage{verbatim}
%\usepackage[colorlinks=true,linkcolor=blue,citecolor=red, pdfborder={0 0 0}]{hyperref}
%%%%%%%%%%%%%%%%%%%%%%%%%%%%%%%%%%%%%%%%%%%%%%%%%%%%%%%%%%%%%%%%%%%%%%
\newtheorem{proposition}{Proposition}[section]
\newtheorem{theorem}[proposition]{Theorem}
\newtheorem{corollary}[proposition]{Corollary}
\newtheorem{lemma}[proposition]{Lemma}

\theoremstyle{definition}
\newtheorem{definition}[proposition]{Definition}

\newtheorem{remark}[proposition]{Remark}

\numberwithin{equation}{section}

\def\R{\Bbb R}
\def\Dx{\Delta_x}
\def\Nx{\nabla_x}
\def\Dt{\partial_t}
\def\({\left(}
\def\){\right)}
\def\eb{\varepsilon}
\def\Cal{\mathcal}
\def\Bbb{\mathbb}

\begin{document}
\title[Quintic wave equation]{Attractors for  damped quintic wave equations in  bounded domains}
\author[] {Varga Kalantarov${}^1$, Anton Savostianov${}^2$, and Sergey Zelik${}^2$}

\begin{abstract}  The dissipative wave equation with a critical quintic nonlinearity in smooth bounded  three dimensional domain is considered. Based on the recent extension of the Strichartz estimates to the case of bounded domains, the existence of a compact global attractor for the solution semigroup of this equation is established. Moreover, the smoothness of the obtained attractor is also shown.
\end{abstract}

\subjclass[2000]{35B40, 35B45}
\keywords{quintic wave equation,  global attractor, smoothness}
\thanks{
This work is partially supported by the Russian Ministry of Education and Science (contract no.
8502).}
\address{${}^1$Department of mathematics, Ko{\c c} University,
\newline\indent Rumelifeneri Yolu, Sariyer 34450\newline\indent
Sariyer, Istanbul, Turkey}
\address{${}^2$ University of Surrey, Department of Mathematics, \newline
Guildford, GU2 7XH, United Kingdom.}

\email{vkalantarov@ku.edu.tr}
\email{a.savostianov@surrey.ac.uk}
\email{s.zelik@surrey.ac.uk}
\maketitle
\tableofcontents
\section{Introduction}\label{s0}
We consider the following damped wave equation:
\begin{equation}\label{0.eqmain}
\begin{cases}
\Dt^2 u+\gamma\Dt u-\Dx u+f(u)=g,\\
u\big|_{t=0}=u_0,\ \ \Dt u\big|_{t=0}=u_0'
\end{cases}
\end{equation}
in a bounded smooth domain $\Omega$ of $\R^3$ endowed by the Dirichlet boundary conditions. Here $\gamma$ is a fixed strictly positive constant, $\Dx$ is a Laplacian with respect to the variable $x=(x^1,x^2,x^3)$, the nonlinearity $f$ is assumed to have a quintic growth rate  as $u\to \infty$:
\begin{equation}
\label{0.f5}
f(u)\sim u^5
\end{equation}
and to satisfy some natural assumptions, see Section \ref{s5} for the details, and
the initial data $\xi_u(0):=(u_0,u_0')$ is taken from  the standard energy space $\Cal{E}$:
$$
\Cal E:=H^{1}_0(\Omega)\times L^2(\Omega),\ \ \|\xi_u\|^2_{\Cal E}:=\|\Nx u\|^2_{L^2}+\|\Dt u\|^2_{L^2}.
$$
Dispersive or/and dissipative semilinear wave equations of the form \eqref{0.eqmain}  model various oscillatory processes
in many areas of  modern mathematical physics including  electrodynamics, quantum mechanics, nonlinear elasticity,  etc. and are of a big permanent interest, see \cite{lions,BV,tem,CV,straus,tao,sogge-book} and references therein.
\par
To the best of our knowledge, the global well-posedness of the quintic wave equations in the whole space ($\Omega=\R^3$) has been first obtained by
 Struwe \cite{St}  in the class of radially symmetric solutions and by Grillakis \cite{Grill}   for the non radially symmetric case and smooth initial data. Their proof is strongly based on the explicit formulas for the solutions of the wave equation in $\R^3$ as well as on the so called Morawetz-Pohozhaev identity.
 \par
 The global unique solvability in $\Omega=\R^3$ for the initial data in the energy space has been verified by
 Shatah and Struwe \cite{SS} (see also \cite{SS1} and \cite{kap1,kap2,kap3}). This well-posedness is obtained in the class of solutions which possess (together with the energy estimate) some extra space-time regularity 
(say, $u\in L^4(t,t+1;L^{12}(\Omega))$ or $u\in L^8((t,t+1)\times\Omega)$ or $(u,\Dt u)\in L^4(t,t+1;\dot W^{1/2,4}(\Omega)\times\dot W^{-1/2,4}(\Omega))$) which follow from the Strichartz type estimates. In the present paper we refer to the analogues of such solutions in bounded domains   as {\it Shatah-Struwe} solutions, see Section \ref{s1} for more details.
\par
Again to the best of our knowledge, even in the case of the whole space $\Omega=\R^3$ or in the case where $\Omega$ is a compact manifold without boundary, the global attractors for equations of the type \eqref{0.eqmain} have been constructed only for the {\it sub-quintic} case $f(u)\sim u|u|^{4-\eb}$, $\eb>0$, see \cite{feireisl} and \cite{kap4}, and their existence in the quintic case was a longstanding  open problem.
\par
The case of bounded domains looked even more delicate since the Strichartz type estimates have been not known for that case till recently and based purely on the energy estimates, one can verify the global well-posedness only for the cases of {\it cubic} or {\it sub-cubic} growth rates of the nonlinearity $f$. Therefore, for a long time, exactly the cubic growth rate of the nonlinearity $f$ has been considered as a critical one for the case of bounded domains, see \cite{BV,HR,CV,Lad,ZelDCDS,ZelCPAA, PataZel} and references therein. In particular, the existence of a compact global attractor for that case has been known only for the nonlinearity of the cubic growth rate and, for faster growing nonlinearities, only the versions of weak trajectory attractors (without compactness and uniqueness) have been available, see \cite{CV,ZelDCDS} and references therein.
\par
However, due to the recent progress in Strichartz estimates, see \cite{sogge1,plan1}, the suitable versions of Strichartz estimates are now available for the case of bounded domains as well. Moreover, using also the proper generalization of Morawetz-Pohozhaev identity to the case of bounded domains, the result of Shatah and Struwe on the global well-posedness of quintic wave equations is now extended to the case of smooth bounded domains, see \cite{plan1,plan2}. Thus, it becomes more natural to refer (similar to the case $\Omega=\R^3$)
 to the {\it quintic} growth rate of the non-linearity $f$ as the {\it critical} one and treat the sub-quintic case as a sub-critical one. We will follow this terminology throughout of the paper.
\par
The main aim of the present paper is to develop the attractor theory for the semilinear wave equation \eqref{0.eqmain} in bounded domains for the nonlinearities of the {\it quintic} and sub-quintic (but super-cubic) growth rates. Note from the very beginning that our results in the sub-quintic case are  more or less straightforward extensions of the results \cite{feireisl} to the case of bounded domains based on the new Strichartz estimates. So, we give the analysis of this case only for the completness (see Section \ref{s3}) and are mainly concentrated on the most interesting case of the critical quintic growth rate.
\par
The case of quintic growth rate is indeed much more delicate since the global well-posedness theorem mentioned above gives only the existence and uniqueness of the  solutions with extra space-time regularity $u\in L^4(t,t+1;L^{12}(\Omega))$, but does not give any control of this norm in terms of the initial data and, in particular, does not give any information on the behavior of such norm as $t\to\infty$. By this reason, the control of this norm may be a priori lost when passing to the limit $t\to\infty$. As a result, even starting from the regular Shatah-Struwe solutions, we may a priori lose the extra space-time regularity on the attractor. Since the uniqueness in the classes of solutions weaker than the Shatah-Struwe ones is also not known, this is a crucial difficulty which (again a priori) may destroy the theory.
\par
 To overcome this problem, we verify (in Section \ref{s2}) that any Shatah-Struwe solution can be obtained as a limit of Galerkin approximations and utilize the results obtained in \cite{ZelDCDS} on the weak trajectory attractors of the Galerkin solutions. Namely, based  on the finiteness of the dissipation integral, it is shown there that even in the case of supercritical growth rate of $f$, every complete solution  $u(t)$, $t\in\R$, belonging to the weak attractor becomes {\it smooth} for sufficiently large negative times, see Section \ref{s4}. Combining this result with the global solvability in the class of Shatah-Struwe solutions, we verify that, in the quintic case, the weak attractor consists of smooth solutions which, in particular, satisfy the energy identity. Using then the so-called energy method, see \cite{ball,rosa}, we finally establish the existence of a compact global attractor for the quintic wave equation \eqref{0.eqmain}, see Section \ref{s5}.
 \par
 Thus, the following theorem is the main result of the paper (see Section \ref{s5} for more details).
 \begin{theorem}\label{Th0.main} Let the quintic non-linearity $f$ satisfy assumptions \eqref{2.extradis} and \eqref{00.3} with $p=3$ and let $g\in L^2(\Omega)$. Then, the (Shatah-Struwe) solution semigroup $S(t):\Cal E\to\Cal E$ associated with equation \eqref{0.eqmain} possesses a global attractor $\Cal A$ in $\Cal E$ which is a bounded set in the more regular space
 $$
 \Cal E_1:=[H^2(\Omega)\cap H^1_0(\Omega)]\times H^1_0(\Omega).
 $$
\end{theorem}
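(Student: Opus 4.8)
The plan is to follow the road-map of the introduction: establish dissipativity of the Shatah--Struwe semigroup, reduce the validity of the energy \emph{identity} on limit trajectories to the smoothing results of Sections~\ref{s2} and~\ref{s4}, run the energy method to obtain the attractor in $\mathcal E$, and finally bootstrap its regularity up to $\mathcal E_1$. By the global well-posedness of Shatah--Struwe solutions (recalled above, cf.\ \cite{plan1,plan2}) the formula $S(t):\xi_u(0)\mapsto\xi_u(t)$ defines a semigroup on $\mathcal E$, continuous by the uniqueness and stability of these solutions; multiplying \eqref{0.eqmain} by $\partial_t u+\alpha u$ for small $\alpha>0$ and using the dissipativity assumption \eqref{2.extradis} to bound $\int_\Omega F(u)\,dx$ and $\int_\Omega f(u)u\,dx$ from below gives the standard estimate $\|\xi_u(t)\|_{\mathcal E}^2\le Q(\|\xi_u(0)\|_{\mathcal E})e^{-\beta t}+C(1+\|g\|_{L^2}^2)$, so that $S(t)$ has a bounded absorbing set $\mathcal B\subset\mathcal E$ and it suffices to treat $S(t)$ on $\mathcal B$.

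The essential difficulty is that the extra Strichartz regularity $u\in L^4(t,t+1;L^{12})$ of a Shatah--Struwe solution is not controlled by $\|\xi_u(0)\|_{\mathcal E}$ and may be lost as $t\to\infty$; without it the energy identity --- the engine of the energy method --- survives on limit trajectories only as an inequality. To remove this obstruction I would invoke Sections~\ref{s2} and~\ref{s4}: every Shatah--Struwe solution is a limit of Galerkin approximations, the Galerkin scheme possesses a weak trajectory attractor, and --- using the finiteness of the dissipation integral $\int_\R\|\partial_t u\|_{L^2}^2\,dt<\infty$ --- every complete bounded trajectory on that attractor is smooth for all sufficiently large negative times. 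Propagating this regularity forward via the Shatah--Struwe solvability shows that every complete bounded trajectory of $S(t)$ is a smooth Shatah--Struwe solution with Strichartz norm bounded in terms of its $\mathcal E$-bound, and hence satisfies the energy identity
\[
E(\xi_u(t_2))-E(\xi_u(t_1))=-\gamma\int_{t_1}^{t_2}\|\partial_t u(s)\|_{L^2}^2\,ds,\qquad E(\xi_u):=\tfrac12\|\xi_u\|_{\mathcal E}^2+\int_\Omega F(u)\,dx-\int_\Omega gu\,dx .
\]

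With this in hand I would run the energy method of \cite{ball,rosa}. Given $\xi_n\in\mathcal B$ and $t_n\to\infty$, the functions $v_n(t):=S(t_n+t)\xi_n$ are Shatah--Struwe solutions uniformly bounded in $\mathcal E$ on $[-t_n+T_0,\infty)$, so a diagonal/compactness argument extracts a subsequence converging weakly in $\mathcal E$, locally uniformly in $t$, to a complete bounded trajectory $v$. Each $v_n$ satisfies the above energy identity on bounded intervals and, by the previous paragraph, so does $v$; comparing these identities and using weak lower semicontinuity, the weak convergence $v_n(0)\rightharpoonup v(0)$ in the Hilbert space $\mathcal E$, and convergence of the lower-order terms, one obtains $\|v_n(0)\|_{\mathcal E}\to\|v(0)\|_{\mathcal E}$, hence $S(t_n)\xi_n\to v(0)$ strongly in $\mathcal E$. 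Thus $S(t)$ is asymptotically compact on $\mathcal B$ and $\mathcal A:=\omega(\mathcal B)$ is a compact global attractor in $\mathcal E$.

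For the smoothness, $\mathcal A$ is strictly invariant, so each $\xi\in\mathcal A$ equals $\xi_u(0)$ for a complete bounded trajectory $u$, which by the second paragraph is a smooth Shatah--Struwe solution with uniformly bounded Strichartz norm. To upgrade this to a uniform $\mathcal E_1$-bound I would bootstrap with the splitting $u=v+w$, where $v$ solves the homogeneous damped linear wave equation with data $\xi_u(s)$ (so $\|\xi_v(t)\|_{\mathcal E_1}\to0$ as $s\to-\infty$ for fixed $t$, since $\xi_u(s)$ is already regular for $s\ll0$) and $w$ solves $\partial_t^2 w+\gamma\partial_t w-\Dx w=g-f(u)$ with zero data at time $s$; combining the Strichartz estimates for the linear equation (fixed-time energy estimates being insufficient at the quintic exponent) with the growth bound \eqref{00.3} for $p=3$ and the space-time regularity already available for $u$ first yields a small gain $\xi_u(t)\in H^{1+s_0}\times H^{s_0}$ uniformly on $\mathcal A$, and a second iteration --- now with $f(u)\in L^2$ uniformly --- gives $\xi_u(t)\in\mathcal E_1$ uniformly. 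I expect the principal obstacle to be exactly the interplay of the second and third paragraphs, i.e.\ forcing the limit trajectories of the critical quintic flow to satisfy the energy identity (for which the detour through the Galerkin approximations, the weak trajectory attractor, and its backward smoothing property is indispensable), with the critical-exponent bootstrap of the last paragraph as the second delicate point.
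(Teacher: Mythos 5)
Your first three paragraphs (dissipativity, the detour through Galerkin solutions, the backward smoothing on the weak trajectory attractor, and the energy method for asymptotic compactness) follow the paper's own route, and the only omission there is minor: to identify the smooth forward Shatah--Struwe extension with the original complete Galerkin trajectory you need the weak--strong uniqueness result (Theorem \ref{Th4.uni}), which "propagating forward" silently uses. The genuine problems are in the regularity part. In your second paragraph you assert that every complete bounded trajectory is a Shatah--Struwe solution "with Strichartz norm bounded in terms of its $\Cal E$-bound", and in the last paragraph you rely on this uniform bound. That assertion does not follow from backward smoothing plus global solvability: Theorem \ref{Th2.SSgwp} only gives \eqref{2.badSS} with a function $Q(\xi_0,T)$ that is not controlled as $T\to\infty$, and the paper explicitly points out (Remark \ref{Rem5.problems}) that a bound of the Strichartz norm by the energy norm, i.e.\ \eqref{5.key}, is not known even in this autonomous setting. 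What is true --- and what the paper proves --- is the a posteriori uniform bound \eqref{5.str} on the kernel $\Cal K$, but it is obtained only \emph{after} the attractor is known to be compact in $\Cal E$, by running the smallness-by-compactness argument of Proposition \ref{Prop2.exist}/Corollary \ref{Cor1.comp} with data on the compact set $\Cal A$, using shift invariance of $\Cal K$ and the Lipschitz continuity of the solution map in the Strichartz norm (Corollary \ref{Cor5.stri}). Your asymptotic-compactness step survives (the energy identity only needs local-in-time Strichartz regularity of the limit trajectory), but your $\Cal E_1$-step must be preceded by this a posteriori derivation rather than by the unjustified claim.

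Even granting the uniform bound \eqref{5.str}, your final bootstrap does not close at the critical exponent. With only $\sup_T\|u\|_{L^4(T,T+1;L^{12})}\le C$ one gets $\|f(u)\|_{L^1(t,t+1;W^{1,6/5})}\le C(1+\|u\|^4_{L^4(L^{12})})\|\Nx u\|_{L^\infty(L^2)}$, and $W^{1,6/5}(\Omega)\subset L^2(\Omega)$ is exactly borderline in 3D, so $f(u)\in L^1(L^2)$ with \emph{no} positive Sobolev gain; the interpolation used in Corollary \ref{Cor3.comp} gives $\delta=\frac{3\kappa}{10+3\kappa}=0$ when $\kappa=0$, so there is no "small $s_0>0$" to iterate from, and a Gronwall estimate with coefficient $\|u(t)\|^4_{L^{12}}$ only yields bounds growing like $e^{CT}$. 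This is precisely why the paper does not argue as in the subcritical case: it uses the \emph{compactness} of $\Cal K\big|_{[0,1]}$ in $L^4(0,1;L^{12})$ (Corollary \ref{Cor5.stri}) to split $u=\bar u+\tilde u$ with $\tilde u$ arbitrarily small in the Strichartz norm and $\bar u$ uniformly bounded in $\Cal E_1$, differentiates the equation ($v=\Dt u$, as in Proposition \ref{Prop2.smooth}), and closes a Gronwall inequality with a small integrable coefficient $l_\eb(t)$, taking the uniform backward bound \eqref{4.bbb} as the starting $\Cal E_1$-datum and recovering $H^2$ from elliptic regularity with $f'\ge -K$. Some such smallness mechanism is indispensable; uniform boundedness of the Strichartz norm alone cannot produce the $\Cal E_1$-bound in the quintic case.
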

The paper is organized as follows. The preliminary things, including the key Strichartz estimates for the linear equation and various types of energy solutions of \eqref{0.eqmain} are discussed in Section \ref{s1}. The key properties of the Shatah-Struwe solutions including the local and global existence, uniqueness and further regularity are collected in Section \ref{s2}. Section \ref{s3} is devoted to the relatively simple sub-critical case when the nonlinearity $f$ grows slower than a quintic polynomial and the analogue of Theorem \ref{Th0.main} for that case is obtained there.
The brief exposition of the trajectory attractor theory for the critical and supercritical wave equations developed in \cite{ZelDCDS} is given in Section \ref{s4}. Finally, the existence of a compact global attractor for the quintic wave equation is proved in Section \ref{s5}.

\section{Preliminaries: Strichartz type estimates and types of energy solutions}\label{s1}
In this section, we introduce the key concepts and technical tools which will be used throughout of the paper. We start with the  estimates for the solutions of the following linear equation:
%$$
\begin{equation}\label{1.linear}
\Dt^2 v+\gamma\Dt v-\Dx v=G(t),\ \ \xi_u\big|_{t=0}=\xi_0,\ \ u\big|_{\partial\Omega}=0.
\end{equation}
%$$
The next proposition is a classical energy estimate for the linear equation \eqref{1.linear}.
\begin{proposition}\label{Prop1.linen} Let $\xi_0\in\Cal E$, $G\in L^1(0,T;L^2(\Omega))$ and let $v(t)$ be a solution of equation \eqref{1.linear} such that $\xi_v\in C(0,T;\Cal E)$. Then the following estimate holds:
%$$
\begin{equation}\label{1.linen}
\|\xi_v(t)\|_{\Cal E}\le C\(\|\xi_0\|_{\Cal E}e^{-\beta t}+\int_0^te^{-\beta(t-s)}\|G(s)\|_{L^2}\,ds\),
\end{equation}
%$$
where the positive constants $C$ and $\beta$ depend on $\gamma>0$, but are independent of $t$, $\xi_0$ and $G$.
\end{proposition}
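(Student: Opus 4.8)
The plan is to run the classical multiplier (energy) argument for the damped wave operator. First I would introduce, for a small parameter $\alpha\in(0,\gamma)$ to be fixed later, the modified energy
$$
E_\alpha(t):=\tfrac12\|\Dt v(t)\|_{L^2}^2+\tfrac12\|\Nx v(t)\|_{L^2}^2+\alpha(\Dt v(t),v(t))+\tfrac{\alpha\gamma}2\|v(t)\|_{L^2}^2,
$$
where $(\cdot,\cdot)$ denotes the $L^2(\Omega)$ inner product. Since $v(t)\in H^1_0(\Omega)$, the Poincaré inequality $\|v\|_{L^2}\le C_\Omega\|\Nx v\|_{L^2}$ together with Cauchy--Schwarz gives, for $\alpha$ small enough (depending only on $\gamma$ and $C_\Omega$), a two-sided bound $c_1\|\xi_v(t)\|_{\Cal E}^2\le E_\alpha(t)\le c_2\|\xi_v(t)\|_{\Cal E}^2$ with $c_1,c_2>0$ independent of $t$.

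The core computation is to multiply \eqref{1.linear} by $\Dt v+\alpha v$ and integrate over $\Omega$; after integrating by parts in the $-\Dx v$ term (the boundary terms vanishing by the Dirichlet condition) one obtains the identity
$$
\frac{d}{dt}E_\alpha(t)+(\gamma-\alpha)\|\Dt v(t)\|_{L^2}^2+\alpha\|\Nx v(t)\|_{L^2}^2=(G(t),\Dt v(t)+\alpha v(t)).
$$
Shrinking $\alpha$ further if necessary, the dissipative terms on the left-hand side control $2\beta E_\alpha(t)$ for some $\beta=\beta(\gamma)>0$ (again via the equivalence above and Poincaré to absorb the cross term and the $\|v\|_{L^2}^2$ term), while the right-hand side is bounded by $\|G(t)\|_{L^2}\bigl(\|\Dt v(t)\|_{L^2}+\alpha\|v(t)\|_{L^2}\bigr)\le C\|G(t)\|_{L^2}\sqrt{E_\alpha(t)}$. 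This yields the differential inequality $\frac{d}{dt}E_\alpha+2\beta E_\alpha\le C\|G\|_{L^2}\sqrt{E_\alpha}$, that is, $\frac{d}{dt}\sqrt{E_\alpha}+\beta\sqrt{E_\alpha}\le\tfrac C2\|G\|_{L^2}$ (the non-smoothness of $\sqrt\cdot$ at the origin being handled in the standard way, e.g.\ by replacing $E_\alpha$ with $E_\alpha+\delta$ and letting $\delta\to0^+$). Gronwall's lemma then gives $\sqrt{E_\alpha(t)}\le\sqrt{E_\alpha(0)}\,e^{-\beta t}+\tfrac C2\int_0^te^{-\beta(t-s)}\|G(s)\|_{L^2}\,ds$, and invoking the equivalence $E_\alpha\asymp\|\xi_v\|_{\Cal E}^2$ once more produces \eqref{1.linen}.

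The only step that really requires care is that, for a solution merely assumed to satisfy $\xi_v\in C(0,T;\Cal E)$ with $G\in L^1(0,T;L^2(\Omega))$, the multiplier identity above is a priori only formal, since the individual terms such as $(\Dt^2 v,\Dt v)$ need not make classical sense. I would justify it in the usual way: mollify \eqref{1.linear} in time (or approximate $\xi_0$ and $G$ by smooth data), derive the identity for the regularised problem where every term is legitimate, and pass to the limit, which is harmless here because \eqref{1.linear} is linear and a crude energy bound already controls the difference of two such solutions. I expect this regularisation/limit-passage, rather than the Gronwall estimate, to be the main (though routine) technical point. As an alternative one could deduce \eqref{1.linen} from the exponential decay $\|e^{t\mathbb A}\|_{\Cal E\to\Cal E}\le Ce^{-\beta t}$ of the $C_0$-semigroup generated by the damped wave operator $\mathbb A$, combined with Duhamel's formula and Minkowski's integral inequality, but the semigroup decay is proved by precisely the same energy computation, so this route offers no real simplification.
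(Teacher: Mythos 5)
Your proposal is correct and follows essentially the same route as the paper, which likewise derives \eqref{1.linen} by multiplying \eqref{1.linear} by $\Dt v+\alpha v$ for a small $\alpha>0$ and applying the Gronwall inequality (referring to \cite{tem,CV} for the details you spell out). Your added care about justifying the multiplier identity for data merely in $\Cal E\times L^1(0,T;L^2(\Omega))$ via regularisation is the standard completion of that sketch and raises no issues.
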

Indeed, estimate \eqref{1.linen} follows in a standard way by multiplying \eqref{1.linear} by $\Dt v+\alpha v$ (for some $\alpha>0$) and applying the Gronwall inequality, see e.g., \cite{tem,CV} for the details.
\par

The next proposition is however  much more delicate and follows from the recently proved Strichartz type estimates for wave equations in bounded domains, see \cite{sogge1} (see also \cite{plan1,plan2}).

\begin{proposition}\label{Prop1.str} Let the assumptions of Proposition \ref{Prop1.linen} hold. Then, $v\in L^4(0,T;L^{12}(\Omega))$ and the following estimate holds:
%$$
\begin{equation}\label{1.linstr}
\|v\|_{L^4(0,T;L^{12}(\Omega))}\le C_T(\|\xi_0\|_{\Cal E}+\|G\|_{L^1(0,T;L^2(\Omega))}),
\end{equation}
%$$
where $C$ may depend on $T$, but is independent of $\xi_0$ and $G$.
\end{proposition}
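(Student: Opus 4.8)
The plan is to treat the damping term $\gamma\Dt v$ as an admissible perturbation and to reduce the statement to the Strichartz estimate for the \emph{undamped} Dirichlet wave equation, which is precisely the content of \cite{sogge1} (see also \cite{plan1,plan2}). Concretely, I would rewrite \eqref{1.linear} in the form
\[
\Dt^2 v-\Dx v=\widetilde G(t),\qquad \widetilde G:=G-\gamma\Dt v,
\]
and first check that $\widetilde G$ again belongs to $L^1(0,T;L^2(\Omega))$ with norm controlled by the data. For this it is crucial to use \emph{only} the linear energy estimate of Proposition \ref{Prop1.linen}, not the Strichartz norm itself: since $\xi_v\in C(0,T;\Cal E)$ we have $\Dt v\in C(0,T;L^2(\Omega))$, and \eqref{1.linen} gives $\sup_{t\in[0,T]}\|\Dt v(t)\|_{L^2}\le\sup_{t\in[0,T]}\|\xi_v(t)\|_{\Cal E}\le C\left(\|\xi_0\|_{\Cal E}+\|G\|_{L^1(0,T;L^2)}\right)$ with $C$ independent of $T$; integrating in time,
\[
\|\widetilde G\|_{L^1(0,T;L^2)}\le\|G\|_{L^1(0,T;L^2)}+\gamma T\sup_{t\in[0,T]}\|\Dt v(t)\|_{L^2}\le C_T\left(\|\xi_0\|_{\Cal E}+\|G\|_{L^1(0,T;L^2)}\right).
\]

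Next I would apply the Strichartz estimate of \cite{sogge1} to $\Dt^2 v-\Dx v=\widetilde G$ for the energy-level admissible exponent pair $(q,r)=(4,12)$ in space dimension three (note $\tfrac1q+\tfrac3r=\tfrac12$; this is exactly the pair entering the Shatah--Struwe theory in $\R^3$, which is why the quintic growth rate is critical), namely
\[
\|v\|_{L^4(0,T;L^{12}(\Omega))}\le C_T\left(\|\xi_0\|_{\Cal E}+\|\widetilde G\|_{L^1(0,T;L^2(\Omega))}\right),
\]
and then combine it with the bound on $\widetilde G$ above to obtain \eqref{1.linstr}. One routine technical point arises: the cited estimate is an a priori bound for sufficiently smooth solutions, so I would first establish \eqref{1.linstr} for smooth approximating data $\xi_0^n\to\xi_0$ in $\Cal E$ and $G^n\to G$ in $L^1(0,T;L^2)$, and pass to the limit --- convergence of the corresponding solutions $v^n$ in $C(0,T;\Cal E)$ follows from \eqref{1.linen}, the uniform Strichartz bounds allow one to extract a weakly convergent subsequence in the reflexive space $L^4(0,T;L^{12}(\Omega))$, uniqueness of the linear energy solution identifies this limit with $v$, and \eqref{1.linstr} survives by weak lower semicontinuity of the norm.

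The hard part is not really inside this argument: the genuine difficulty --- the Strichartz estimate on a bounded domain, with its loss of derivatives for general exponents and the delicate analysis near the boundary --- is entirely packed into \cite{sogge1,plan1,plan2} and is used here as a black box. Within the reduction, the only point that needs attention is that the perturbative treatment of $\gamma\Dt v$ must not cost any regularity beyond the energy class; this is guaranteed precisely because the bound on $\Dt v$ comes from the energy estimate of Proposition \ref{Prop1.linen}, which is independent of the Strichartz machinery, so there is no circularity. It also remains to check that the pair $(4,12)$ lies in the exponent range for which \cite{sogge1} yields the loss-free estimate on three-dimensional bounded domains, which it does.
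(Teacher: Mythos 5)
Your argument is exactly the paper's: the authors also reduce the damped case to the $\gamma=0$ estimate of \cite{sogge1} by treating $\gamma\Dt v$ as part of the right-hand side and controlling its $L^1(0,T;L^2)$-norm through the energy estimate \eqref{1.linen}. Your write-up just fills in the routine details (the $L^1$ bound on the shifted source and the approximation/weak-limit step) that the paper leaves to the reader.
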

Indeed, for $\gamma=0$ this estimate is established in \cite{sogge1} and the case $\gamma\ne0$ is reduced to the case $\gamma=0$ due to the control of the $L^2$-norm of $\Dt v$ via energy estimate \eqref{1.linen}.

\begin{remark}\label{Rem1.lindis} Combining energy estimate \eqref{1.linen} with the Strichartz estimate \eqref{1.linstr}, we get  a bit stronger {\it dissipative} version of \eqref{1.linstr}:
%$$
\begin{equation}\label{1.disstr}
\|\xi_v(t)\|_{\Cal E}+\|v\|_{L^4(\max\{0,t-1\},t;L^{12}(\Omega))}\le C\(\|\xi_0\|_{\Cal E}e^{-\beta t}+\int_0^te^{-\beta(t-s)}\|G(s)\|_{L^2}\,ds\),
\end{equation}
%$$
where positive constant $\beta$ and $C$ are independent of $v$ and $t\ge0$.
\par
Note also that, due to the interpolation inequality
%$$
\begin{equation}\label{1.int}
\|v\|_{L^{\frac4\theta}(0,T;L^{\frac{12}{2-\theta}}(\Omega))}\le C\|v\|_{L^4(0,T;L^{12}(\Omega))}^\theta\|v\|_{L^\infty(0,T;H^1(\Omega))}^{1-\theta}
\end{equation}
%$$
and energy estimate \eqref{1.linen}, we have the control of the $L^{4/\theta}(L^{12/(2-\theta)})$-norm of the solution $v$ for all $\theta\in[0,1]$. Most important for what follows will be the case $\theta=\frac45$ which controls the $L^5(L^{10})$-norm of the solution.
\end{remark}
The next elementary fact will be  used below for verifying the local existence of weak solutions.
\begin{corollary}\label{Cor1.comp} Let $K\subset\Cal E\times L^1(0,T;L^2(\Omega))$ be a compact set. Then, for every $\eb>0$ there is $T(\eb)>0$ such that
%$$
\begin{equation}\label{1.strcomp}
\|v\|_{L^4(0,T(\eb);L^{12}(\Omega))}\le \eb
\end{equation}
%$$
for all solutions $v$ of problem \eqref{1.linear} with $(\xi_0,G)\in K$.
\end{corollary}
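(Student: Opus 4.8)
The plan is to exploit the linearity of \eqref{1.linear} together with the compactness of $K$ and the estimate \eqref{1.linstr}. First I would observe that for a fixed initial datum--forcing pair $(\xi_0,G)$, the Strichartz norm $\|v\|_{L^4(0,T;L^{12}(\Omega))}$ tends to $0$ as $T\to 0^+$: indeed, splitting $v=v_1+v_2$ where $v_1$ solves the homogeneous equation with data $\xi_0$ and $v_2$ solves the inhomogeneous equation with zero data and forcing $G$, estimate \eqref{1.linstr} applied on $(0,T)$ gives $\|v_2\|_{L^4(0,T;L^{12})}\le C_T\|G\|_{L^1(0,T;L^2)}$, and since $G\in L^1(0,T_0;L^2)$ the tail $\|G\|_{L^1(0,T;L^2)}\to 0$ as $T\to 0^+$ (absolute continuity of the integral); meanwhile for $v_1$ one has $\|v_1\|_{L^4(0,T;L^{12})}\le\big(\int_0^T\|v_1(s)\|_{L^{12}}^4\,ds\big)^{1/4}$, and since $v_1\in L^4(0,T_0;L^{12})$ by Proposition \ref{Prop1.str}, this also tends to $0$ as $T\to 0^+$. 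Hence the map $(\xi_0,G)\mapsto \|v\|_{L^4(0,T;L^{12})}$ is, for each fixed pair, small for small $T$; the remaining issue is to make $T(\eb)$ uniform over $K$.

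To get uniformity I would use a covering argument. Fix $\eb>0$. For each $(\xi_0,G)\in K$ there is, by the previous paragraph, a radius $T_{(\xi_0,G)}>0$ with $\|v\|_{L^4(0,T_{(\xi_0,G)};L^{12})}\le \eb/2$. Now I claim the functional $(\xi_0,G)\mapsto \|v\|_{L^4(0,T;L^{12})}$ is Lipschitz in $(\xi_0,G)$ with a constant depending only on $T$: if $v,\tilde v$ are the solutions with data $(\xi_0,G),(\tilde\xi_0,\tilde G)$, then $v-\tilde v$ solves \eqref{1.linear} with data $\xi_0-\tilde\xi_0$ and forcing $G-\tilde G$, so \eqref{1.linstr} yields
\begin{equation}\label{1.lip}
\|v-\tilde v\|_{L^4(0,T;L^{12})}\le C_T\(\|\xi_0-\tilde\xi_0\|_{\Cal E}+\|G-\tilde G\|_{L^1(0,T;L^2)}\).
\end{equation}
Fixing any $T_*>0$ (say $T_*$ equal to the time horizon on which $K$ lives, or just $T_*=1$ if $K\subset\Cal E\times L^1(0,T;L^2)$ with $T\ge 1$), \eqref{1.lip} shows the functional is continuous on $K$ for any $T\le T_*$, uniformly in such $T$ with the single constant $C_{T_*}$. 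Cover $K$ by finitely many balls $B_1,\dots,B_N$ (in the $\Cal E\times L^1(0,T;L^2)$ metric) of radius $\eb/(4C_{T_*})$ centered at points $(\xi_0^{(i)},G^{(i)})\in K$; for each center pick $T_i>0$ with $\|v^{(i)}\|_{L^4(0,T_i;L^{12})}\le \eb/2$, and set $T(\eb):=\min\{T_1,\dots,T_N,T_*\}$. Then for any $(\xi_0,G)\in K$, choosing the ball $B_i$ containing it, \eqref{1.lip} on $(0,T(\eb))$ and monotonicity of the norm in $T$ give $\|v\|_{L^4(0,T(\eb);L^{12})}\le \|v^{(i)}\|_{L^4(0,T_i;L^{12})}+C_{T_*}\cdot\frac{\eb}{4C_{T_*}}<\eb$, which is \eqref{1.strcomp}.

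I expect the only genuine subtlety is the first step---the claim that the Strichartz norm on $(0,T)$ vanishes as $T\to 0^+$ for a fixed pair---which rests on two soft facts: absolute continuity of the Lebesgue integral for the $L^1(L^2)$ forcing, and the fact that $v\in L^4(0,T_0;L^{12})$ by Proposition \ref{Prop1.str} so its restriction to shrinking intervals has vanishing norm. Everything else (the Lipschitz bound \eqref{1.lip} from linearity plus \eqref{1.linstr}, and the finite cover from compactness of $K$) is routine. One small bookkeeping point: $C_T$ in Proposition \ref{Prop1.str} is nondecreasing in $T$ (or can be taken so), which is what lets a single constant $C_{T_*}$ control \eqref{1.lip} for all $T\le T_*$; this should be remarked but needs no real argument.
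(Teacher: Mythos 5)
Your argument is correct and is exactly the paper's intended proof: the paper disposes of the corollary in one line as an immediate consequence of the Strichartz estimate \eqref{1.linstr} and the Hausdorff criterion, i.e.\ precisely your finite $\eb$-net covering of $K$ combined with the linearity of \eqref{1.linear} (which gives your Lipschitz bound) and the vanishing of the $L^4(0,T;L^{12})$-norm on shrinking intervals for each fixed pair $(\xi_0,G)$. Your write-up simply supplies the routine details the authors leave to the reader, including the harmless remark that $C_T$ may be taken nondecreasing in $T$.
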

Indeed, this assertion is an immediate corollary of estimate \eqref{1.linstr} and the Hausdorff criterium.
\begin{remark} It is not difficult to show, using e.g. the scaling arguments that the assertion of Corollary \ref{Cor1.comp} is false in general if the set $K$ is only bounded in $\Cal E\times L^1(0,T;L^2(\Omega))$.
\end{remark}
We now turn to the nonlinear problem \eqref{0.eqmain} with the nonlinearity of quintic growth rate:
%$$
\begin{equation}\label{1.f5}
|f'(u)|\le C(1+|u|^4)
\end{equation}
%$$
and discuss several classes of weak solutions for it. The most straightforward definition is the following one.
\begin{definition}\label{Def1.simple} A function $v(t)$ is a weak (energy) solution of problem \eqref{0.eqmain} if $\xi_v\in L^\infty(0,T;\Cal E)$ and equation \eqref{0.eqmain} is satisfied in the sense of distributions. The latter means that
%$$
\begin{equation}\label{1.distr}
-\int_0^T(u_t,\phi_t)dt-\gamma\int_0^T(u,\phi_t)dt+\int_0^T(\nabla u,\nabla \phi)dt+\int_0^T(f(u),\phi)dt=\int_0^T(g,\phi)\,dt,
\end{equation}
%$$
for any $\phi\in C_0^\infty((0,T)\times\Omega)$. Here and below $(u,v)$ stands for the usual inner product in $L^2(\Omega)$. Then, due to the growth restriction \eqref{1.f5},
$$
f(u)\in L^\infty(0,T;H^{-1}(\Omega))
$$
and from \eqref{0.eqmain}, we conclude that $\Dt^2 u\in L^\infty(0,T;H^{-1}(\Omega))$. Thus,
$$
\xi_u(t)\in C(0,T;\Cal E_{-1}),\ \ \ \Cal E_{-1}:=L^2(\Omega)\times H^{-1}(\Omega)
$$
and the initial condition $\xi_u(0)=(u_0,u_0')$ is well-defined.
\end{definition}
However, these solutions are extremely difficult to work with. Indeed, most part of estimates related with equation \eqref{0.eqmain} are based on energy type estimates and this requires multiplication of \eqref{0.eqmain} by $\Dt u$, but the regularity of energy solutions is not enough to justify this multiplication if $f(u)$ has faster than cubic growth rate. Thus, to the best of our knowledge even the basic energy estimate is not known for such solutions if $f$ grows faster than $u^3$.
\par
At least two alternative ways to overcome this problem has been used in a literature. One of them consists of requiring additionally a weak solution to satisfy most important energy equalities or inequalities (see \cite{CV,MirZel,plan3} and reference therein). The other one poses the extra condition that a weak solution is obtained as a limit of {\it smooth} solutions of the properly chosen approximation problems. Then the desired estimates are obtained by passing to the limit from the analogous estimates for the approximating solutions, see e.g., \cite{ZelDCDS,MirZel} and reference therein. In this paper we will use the so-called Galerkin approximations for that purposes.
\par
Let $\lambda_1\le\lambda_2\le\cdots$ be the eigenvalues of the operator $-\Dx$ with homogeneous  Dirichlet boundary conditions and $e_1,e_2,\cdots$ be the corresponding eigenfunctions. Then, they form an orthonormal base in $L^2(\Omega)$ and since the domain $\Omega$ is smooth, they are also smooth: $e_i\in C^\infty(\Omega)$. Let $P_N:L^2(\Omega)\to L^2(\Omega)$ be the orthoprojector to the linear subspace spanned by the first $N$ eigenfunctions $\{e_1,\cdots,e_N\}$. Then, the Galerkin approximations to problem \eqref{0.eqmain} are defined as follows:
%$$
\begin{equation}\label{1.galerkin}
\begin{cases}
\Dt^2u_N+\gamma\Dt u_N-\Dx u_N+P_Nf(u_N)=P_Ng,\ \ u_N\in P_NL^2(\Omega),\\
\xi_{u_N}(0)=\xi_0^N\in [P_NL^2(\Omega)]^2.
\end{cases}
\end{equation}
%$$
Remind that \eqref{1.galerkin} is a system of ODEs of order $2N$ with smooth (at least $C^1$) nonlinearity, so it is {\it locally} uniquely solvable and under some natural dissipativity assumptions on $f$ (e.g., \eqref{2.extradis}) the blow up is impossible and the solution is globally defined as well. Moreover, since all of the eigenvectors $e_i$ are smooth, the solutions $u_N(t,x)$ are $C^\infty$-smooth in $x$ and give indeed the desired smooth approximations to \eqref{0.eqmain}. This justifies the following definition

\begin{definition}\label{Def1.Gal} A weak solution $u(t)$, $t\in[0,T]$ (in the sense of Definition \ref{Def1.simple}) is called {\it Galerkin} (weak) solution of problem \eqref{0.eqmain} if it can be obtained as a weak-star limit in $L^\infty(0,T;\Cal E)$ of the Galerkin approximation solutions  $u_N$ of problems \eqref{1.galerkin}:
%$$
\begin{equation}\label{1.gallim}
\xi_u=\lim_{N\to\infty}\xi_{u_n},
\end{equation}
%$$
where the limit is taken in the weak-star topology of $L^\infty(0,T;\Cal E)$. Note that this convergence implies only that
%$$
\begin{equation}\label{1.weak}
\xi_{u_N}(0)\rightharpoonup\xi_u(0)
\end{equation}
%$$
in $\Cal E$ and the strong convergence of the initial data in the energy space is not assumed, see \cite{ZelDCDS} for more details.
\end{definition}
Although the Galerkin solutions are a priori more friendly than the general weak solutions, their uniqueness is known only for the non-linearities growing not faster than $u^3$, so for faster growing nonlinearities, one should use the so-called {\it trajectory} attractors for study their long-time behavior, see \cite{ZelDCDS,MirZel} and also Section \ref{s4} below for more details.
\par
The uniqueness and global well-posedness problem for the case $\Omega=\R^3$ has been resolved by Shatah and Struwe \cite{SS} (see also \cite{kap1,kap2}) in the class of weak solutions satisfying additionally some space-time regularity estimate (e.g., $u\in L^4(0,T;L^{12}(\Omega))$). This result is strongly based on Strichartz estimates for the linear wave equations as well as the Morawetz identity for the nonlinear equation. The analogues of that results for the case of bounded domains have been recently obtained in \cite{sogge1,plan1}, so analogously to the case $\Omega=\R^3$, one can give the following definition (see \cite{sogge1,plan1}).

\begin{definition}\label{Def1.SS} A weak solution $u(t)$, $t\in[0,T]$ is a {\it Shatah-Struwe} solution of problem \eqref{0.eqmain} if the following additional regularity holds:
%$$
\begin{equation}\label{1.str-reg}
u\in L^4(0,T;L^{12}(\Omega)).
\end{equation}
%$$
\end{definition}
\begin{remark}\label{Rem1.equiv} As we will see below, the introduced Shatah-Struwe solutions is a natural class of solutions where the global well-posedness, dissipativity and asymptotic smoothness can be established. However, to verify the existence of a global attractor we will essentially use the Galerkin solutions as an intermediate technical tool.
\par
Note also that the ideal situation where all three introduced above classes of weak solutions are in fact {\it equivalent} is not a priori excluded although, to the best of our knowledge, that is rigorously proved only for the nonlinearities growing not faster than $u^3$. Some results in this direction for quintic nonlinearities and $\Omega=\R^3$ are  obtained in \cite{plan3}.
\end{remark}
\section{Properties of Shatah-Struwe solutions}\label{s2}
The aim of this section is to discuss the well-posednes, dissipativity and smoothness of Shatah-Struwe solutions of problem \eqref{0.eqmain}. Although most of these results are not new or follow in a straightforward way from the known results, they are crucial for what follows, so for the convenience of the reader, we give their proofs here.
\par
We start with the local existence result.

\begin{proposition}\label{Prop2.exist} Let $g\in L^2$ and the nonlinearity $f$ satisfy the growth assumption \eqref{1.f5}. Then, for any initial data $\xi_0\in\Cal E$, there exists $T=T(\xi_0)>0$ such that problem \eqref{0.eqmain} possesses a Shatah-Struwe solution $u(t)$ on the interval $t\in[0,T]$.
\end{proposition}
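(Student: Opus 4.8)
The plan is to prove local existence by a fixed-point argument built on the linear Strichartz estimate (Proposition \ref{Prop1.str}) together with the dissipative energy estimate (Proposition \ref{Prop1.linen}). Set up the iteration on the complete metric space
\[
X_T:=\{u:\ \xi_u\in C(0,T;\Cal E),\ u\in L^4(0,T;L^{12}(\Omega)),\ \|\xi_u\|_{L^\infty(0,T;\Cal E)}+\|u\|_{L^4(0,T;L^{12})}\le R\},
\]
where $R$ is fixed (say $R=2C\|\xi_0\|_{\Cal E}+2C\|g\|_{L^2}+1$ with $C$ the constant from \eqref{1.linstr}) and $T>0$ is to be chosen small. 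Define the map $\Cal T:u\mapsto v$ where $v$ solves the linear problem \eqref{1.linear} with right-hand side $G(t):=g-f(u(t))$ and initial data $\xi_0$. First I would check that $\Cal T$ maps $X_T$ into itself: by \eqref{1.f5} and the mean value theorem $|f(u)|\le C(1+|u|^5)$, and since $u\in X_T$ has $u\in L^\infty(0,T;H^1_0)\hookrightarrow L^\infty(0,T;L^6)$ together with $u\in L^4(0,T;L^{12})$, interpolation (as in \eqref{1.int} with $\theta=4/5$) gives $u\in L^5(0,T;L^{10})$, hence $|u|^5\in L^1(0,T;L^2)$ with a norm bounded by $CT^{?}R^5$ — more precisely $\||u|^5\|_{L^1(0,T;L^2)}=\|u\|_{L^5(0,T;L^{10})}^5$ and the extra smallness in $T$ comes from Hölder in time: $\|u\|_{L^5(0,T;L^{10})}\le T^{1/5-\cdots}$ is not automatic, so one uses absolute continuity of the integral instead (see below). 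Also $\|g\|_{L^1(0,T;L^2)}\le T\|g\|_{L^2}$ is small. Thus by \eqref{1.linstr} and \eqref{1.linen}, $\|\xi_v\|_{L^\infty(0,T;\Cal E)}+\|v\|_{L^4(0,T;L^{12})}\le C\|\xi_0\|_{\Cal E}+C\|g\|_{L^1(0,T;L^2)}+C\|f(u)\|_{L^1(0,T;L^2)}\le R$ for $T$ small.

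Next I would verify the contraction property. Given $u_1,u_2\in X_T$ with images $v_1,v_2$, the difference $w:=v_1-v_2$ solves \eqref{1.linear} with zero initial data and $G=-(f(u_1)-f(u_2))$. Writing $f(u_1)-f(u_2)=\big(\int_0^1 f'(su_1+(1-s)u_2)\,ds\big)(u_1-u_2)$ and using \eqref{1.f5}, $\|f(u_1)-f(u_2)\|_{L^2}\le C(1+|u_1|^4+|u_2|^4)|u_1-u_2|$ pointwise; by Hölder with exponents chosen so that $|u_i|^4$ lands in $L^{10/4}$-type spaces and $|u_1-u_2|$ in $L^{10}$, one gets $\|f(u_1)-f(u_2)\|_{L^1(0,T;L^2)}\le C(1+\|u_1\|_{L^5(L^{10})}^4+\|u_2\|_{L^5(L^{10})}^4)\|u_1-u_2\|_{L^5(0,T;L^{10})}$, where the time integrability again is handled by absolute continuity. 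Then \eqref{1.linstr} and \eqref{1.linen} applied to $w$ give $\|\xi_w\|_{L^\infty(0,T;\Cal E)}+\|w\|_{L^4(0,T;L^{12})}\le \kappa(T)\,\big(\|u_1-u_2\|_{L^\infty(0,T;\Cal E)}+\|u_1-u_2\|_{L^4(0,T;L^{12})}\big)$ with $\kappa(T)\to0$ as $T\to0$; shrinking $T$ makes $\Cal T$ a contraction on $X_T$. The Banach fixed point theorem then yields a unique $u\in X_T$ with $\Cal T u=u$, which is by construction a weak solution in the sense of Definition \ref{Def1.simple} (the distributional identity \eqref{1.distr} follows from the corresponding identity for the linear problem \eqref{1.linear}) and lies in $L^4(0,T;L^{12}(\Omega))$, i.e.\ it is a Shatah-Struwe solution.

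The main obstacle, and the point needing care, is extracting smallness in $T$ from the nonlinear term. A crude Hölder-in-time bound $\|u\|_{L^5(0,T;L^{10})}\le T^{1/20}\|u\|_{L^4(0,T;L^{12})}^{4/5}\|u\|_{L^\infty(0,T;H^1)}^{1/5}$ is in fact available from \eqref{1.int} combined with $L^5((0,T))\hookrightarrow$ weighted $L^4$, but cleaner is to note that $u\in L^5(0,T_0;L^{10})$ for the (fixed) existence time $T_0$ of the corresponding linear problem, hence $s\mapsto\int_0^s\|u(r)\|_{L^{10}}^5\,dr$ is absolutely continuous and $\to0$ as $s\to0$; this is exactly the mechanism of Corollary \ref{Cor1.comp}. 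One subtlety is that $X_T$ must be shown nonempty and closed — closedness under the $C(0,T;\Cal E)\cap L^4(0,T;L^{12})$ norm is standard, and nonemptiness follows since the solution of the linear equation with $G=g$ already lies in $X_T$ for $T$ small by the same estimates. A second routine-but-necessary check is that the fixed point, a priori only a distributional solution with $\xi_u\in C(0,T;\Cal E)$, genuinely has $\xi_u\in C(0,T;\Cal E)$ (not merely $L^\infty$): this is inherited from the linear theory, since $\Cal T u$ solves the linear equation with $L^1(0,T;L^2)$ data and such solutions are continuous into $\Cal E$. Once these are in place the proposition follows.
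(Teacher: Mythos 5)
Your scheme does not close in the critical case, and the point where it breaks is exactly the one you flag as ``needing care''. In the self-map step the function $u$ you plug into $f$ is an \emph{arbitrary} element of the ball $X_T$, so all you can say is $\|f(u)\|_{L^1(0,T;L^2)}\le C\bigl(T+\|u\|^5_{L^5(0,T;L^{10})}\bigr)\le C(T+C^5R^5)$ with $R\sim\|\xi_0\|_{\Cal E}$; since $R^5\gg R$ for large data, $\Cal T$ does not map $X_T$ into $X_T$ however small $T$ is. Neither of your two proposed rescues works. The ``crude H\"older-in-time bound'' $\|u\|_{L^5(0,T;L^{10})}\le T^{1/20}\|u\|^{4/5}_{L^4(0,T;L^{12})}\|u\|^{1/5}_{L^\infty(0,T;H^1)}$ is false: the space interpolation forces $\theta=4/5$ exactly, which consumes the full $L^4_t(L^{12})$ norm and leaves no room for a positive power of $T$ -- this is precisely what ``critical'' means here (compare \eqref{3.nice}, where the factor $t^{\kappa/5}$ appears only when $\kappa>0$). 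And the absolute-continuity argument applies to a \emph{fixed} function (e.g.\ the solution of the linear problem with the given data), not uniformly over all elements of $X_T$; by scaling, the $L^4(0,T;L^{12})$ norms of elements of a fixed ball do not become uniformly small as $T\to0$, which is exactly the content of the remark after Corollary \ref{Cor1.comp}. The same defect kills the contraction step: your Lipschitz constant is $C\bigl(1+\|u_1\|^4_{L^5(L^{10})}+\|u_2\|^4_{L^5(L^{10})}\bigr)\sim C(1+R^4)$ times $\|u_1-u_2\|_{L^5(L^{10})}$, and the latter is controlled by the $X_T$-norm of $u_1-u_2$ with no factor tending to $0$ as $T\to0$, so $\kappa(T)\not\to0$. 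Note also that, had your argument worked, it would yield a lifespan $T=T(\|\xi_0\|_{\Cal E},\|g\|_{L^2})$, which Remark \ref{Rem2.difficult} states is \emph{not} true in the quintic case -- a strong sign that any such proof must contain an error.

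The repair is the one the paper uses (at the level of Galerkin approximations): split $u=v+w$, where $v$ solves the \emph{linear} problem with the given data $(\xi_0,g)$, so that for this fixed datum the Strichartz norm $\|v\|_{L^4(0,T;L^{12})}$ can be made $\le\eb$ by choosing $T=T(\eb)$ small (absolute continuity for a fixed function, or Corollary \ref{Cor1.comp} for the compact family $\{(P_N\xi_0,P_Ng)\}$ when uniformity in $N$ is needed), and then run the smallness argument for the remainder $w$ only, whose Strichartz and energy norms start from zero: the paper closes this with the continuity inequality $Y(t)\le C(t+\eb^4)+CY(t)^5$, $Y(0)=0$; a contraction for $w$ in a ball of small radius $\eb$ would work equally well. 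Your iteration, by contrast, puts the full (large) solution into a ball of radius $R\sim\|\xi_0\|_{\Cal E}$ in the Strichartz norm, and at criticality such a large ball cannot be stable or contractive. A secondary structural remark: the paper constructs the solution as a limit of Galerkin approximations on purpose, since this fact is reused later (Corollary \ref{Cor2.SSGalerkin} and Section \ref{s5}); a pure fixed-point construction, even if repaired as above, would not by itself provide that link.
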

\begin{proof} We construct the desired solution $u$ by passing to the limit $N\to\infty$ in the Galerkin approximations \eqref{1.galerkin}. To this end, it suffices  to obtain a {\it uniform} with respect to $N$ estimate for the $L^4(0,T;L^{12})$-norm of the solutions $u_N(t)$. To obtain such an estimate, we fix the initial data $\xi_{u_N}(0)=\xi_0^N:=P_N\xi_0$. This guarantees that
$$
\xi_0^N\to\xi_0
$$
{\it strongly} in $\Cal E$. Then, we split the solution $u_N=v_N+w_N$ where $v_N$ solves the linear problem
%$$
\begin{equation}
\Dt^2 v_N+\gamma\Dt v_N-\Dx v_N=P_Ng,\ \ \xi_{u_N}(0)=P_N\xi_0
\end{equation}
%$$
and $w_N$ is a reminder which satisfies
%$$
\begin{equation}\label{2.wN}
\Dt^2w_N+\gamma\Dt w_N-\Dx w_n=-P_N f(v_N+w_N),\ \ \xi_{w_N}(0)=0.
\end{equation}
%$$
Note that the set of data $\{(\xi_0^N,P_Ng)\}_{N=1}^\infty$ is a compact set in $\Cal E\times L^1(0,1;L^2(\Omega))$. Therefore, due to Corollary \ref{Cor1.comp}, for any $\eb>0$, there exists $T=T(\eb)>0$ (which is independent of $\eb$) such that
%$$
\begin{equation}\label{2.Nstr}
\|\xi_{v_N}(t)\|_{\Cal E}\le C,\ \ \|v_N\|_{L^4(0,t;L^{12}(\Omega))}\le \eb,\ \ \ t\le T(\eb),
\end{equation}
%$$
where $C$ is independent of $N$. Then, due to the growth restriction \eqref{1.f5} and interpolation inequality \eqref{1.int} with $\theta=4/5$, we have
%$$
\begin{multline}\label{3.pnf}
\|P_Nf(v_N+w_N)\|_{L^1(0,t;L^{2}(\Omega))}\le\\\le C(t+\|v_N\|_{L^5(0,t;L^{10}(\Omega))}^5+\|w_N\|_{L^5(0,t;L^{10}(\Omega))}^5)\le C(t+\eb^4+\|w_N\|_{L^5(0,t;L^{10}(\Omega))}^5).
\end{multline}
%$$
Applying now estimate \eqref{1.disstr} to equation \eqref{2.wN} and using interpolation inequality \eqref{1.int} with $\theta=4/5$ again, we end up with
%$$
\begin{multline}\label{2.wNgood}
\|\xi_{w_N}(t)\|_{\Cal E}+\|w_N\|_{L^5(0,t;L^{10}(\Omega))}+\|w_N\|_{L^4(0,t;L^{12}(\Omega))}\le\\\le C(t+\eb^4)+C(\|\xi_{w_N}(t)\|_{\Cal E}+\|w_N\|_{L^5(0,t;L^{10}(\Omega))})^5.
\end{multline}
%$$
Thus, denoting $Y_N(t):=\|\xi_{w_N}(t)\|_{\Cal E}+\|w_N\|_{L^5(0,t;L^{10}(\Omega))}$, we end up with the inequality
$$
Y_N(t)\le C(t+\eb^4)+CY_N(t)^5,\ \ t\le T(\eb),\ \ Y_N(0)=0,
$$
where the constant $C$ is independent of $N$, $t$ and $\eb$. Moreover, obviously $Y_N(t)$ is a continuous function of $t$. Then, the last inequality gives
$$
Y_N(t)\le 2C(t+\eb^4)
$$
if $\eb$ and $T(\eb)$ is chosen in such way that
$$
C(2C(t+\eb^4))^5\le C(t+\eb^4),\ \ t\le T(\eb).
$$
Thus, fixing $\eb$ and $T=T(\eb)$ being small enough to satisfy the last inequality, we get the uniform estimate
$$
\|\xi_{w_N}\|_{C(0,T;\Cal E)}+\|w_N\|_{L^5(0,T;L^{10}(\Omega))}\le C_1
$$
which together with  \eqref{2.Nstr} and \eqref{2.wNgood}, gives the desired uniform estimate
$$
\|\xi_{u_N}\|_{L^\infty(0,T;\Cal E)}+\|u_N\|_{L^4(0,T;L^{12}(\Omega))}\le C_2.
$$
Passing then in a standard way to the weak limit $N\to\infty$, we end up with a Shatah-Struwe solution of \eqref{0.eqmain} and finish the proof of the proposition.
\end{proof}
\begin{remark}\label{Rem2.difficult} Obviously, the lifespan $T=T(\xi_0)$ of a Shatah-Struwe solution $u$ depends a priori on the initial data $\xi_0$ and since it is greater than zero for all $\xi_0\in\Cal E$, one may expect that $T$ depends on the $\Cal E$-norm of $\xi_0$ only:
%$$
\begin{equation}\label{2.false}
T=T(\|\xi_0\|_{\Cal E}).
\end{equation}
%$$
In that case, the {\it global} solvability problem would be reduced to the control of the energy norm of a solution $u(t)$. Since such a control follows immediately from the energy estimate (see below), the global solvability would also become immediate.
\par
Unfortunately, \eqref{2.false} {\it is not true} in the critical quintic case (as we will see in the next section, it is indeed true in the subcritical case) and, by this reason, one needs a lot of extra efforts in order to establish the desired global solvability.
\par
However, \eqref{2.false} remains true even in the critical quintic case under the extra assumption that the energy norm $\|\xi_0\|_{\Cal E}$ is small:
$$
\|\xi_0\|_{\Cal E}\le\eb_1\ll1.
$$
Indeed, in that case the key estimates \eqref{2.Nstr}, follow directly from the smallness of the energy norm of $\xi_0$ and estimate \eqref{1.disstr} and no compactness arguments of Corollary \ref{Cor1.comp} are required. This simple observation not only leads for the global solvability for small initial data in the case where the smallness of the energy of a solution follows from the energy estimate, but also plays a crucial role in the proof of global solvability for all initial data via the non-concentration of the energy norm, see \cite{plan1,SS}.
\end{remark}
At the next step, we check that a Shatah-Struwe solution satisfies the energy equality.
\begin{proposition}\label{Prop2.eneq} Let $g\in L^2(\Omega)$, the nonlinearity $f$ satisfy the growth restriction \eqref{1.f5} and $u(t)$, $t\in[0,T]$ be a Shatah-Struwe solution of equation \eqref{0.eqmain}. Then the functions $t\to\|\xi_u(t)\|_{\Cal E}$ and $t\to (F(u(t)),1)$ are
absolutely continuous and the following energy identity
%$$
\begin{equation}\label{2.energy}
\frac d{dt}\(\frac12\|\xi_u(t)\|^2_{\Cal E}+(F(u(t)),1)-(g,u(t))\)+\gamma\|\Dt u(t)\|^2_{L^2}=0
\end{equation}
%$$
holds for almost all $t\in[0,T]$. In particular, $\xi_u\in C([0,T],\Cal E)$.
\end{proposition}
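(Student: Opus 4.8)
The plan is to make the formal energy computation --- multiply \eqref{0.eqmain} by $\Dt u$ --- rigorous; the only obstruction for a general weak solution is that the pairing $(f(u),\Dt u)$ need not be defined, and it is precisely the Strichartz regularity \eqref{1.str-reg} that removes it. First I would record the relevant integrability. By the interpolation inequality \eqref{1.int} with $\theta=\tfrac45$ together with the energy bound $\xi_u\in L^\infty(0,T;\Cal E)$, any Shatah--Struwe solution satisfies $u\in L^5(0,T;L^{10}(\Omega))$; since \eqref{1.f5} gives $|f(u)|\le C(1+|u|^5)$ and $|F(u)|\le C(1+|u|^6)$, this yields $f(u)\in L^1(0,T;L^2(\Omega))$ (because $\|f(u(t))\|_{L^2}\le C(1+\|u(t)\|_{L^{10}}^5)$), $f(u)\Dt u\in L^1((0,T)\times\Omega)$ (since also $\Dt u\in L^\infty(0,T;L^2)$), and $F(u)\in L^\infty(0,T;L^1(\Omega))$ (using $H^1_0\hookrightarrow L^6$).

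Next I would mollify in the time variable: with a standard mollifier $\rho_h$ put $u_h:=\rho_h*u$ (working on a compact subinterval and extending $u$, which is harmless). Then $u_h\in C^\infty([0,T];H^1_0(\Omega))$ and it solves $\Dt^2u_h+\gamma\Dt u_h-\Dx u_h=g-\rho_h*f(u)$, so multiplying by $\Dt u_h$ and integrating over $\Omega\times[s,t]$ gives the energy identity for $u_h$. Letting $h\to0$ --- using that $\xi_{u_h}(\tau)\to\xi_u(\tau)$ in $\Cal E$ and $\Dt u_h(\tau)\to\Dt u(\tau)$ in $L^2(\Omega)$ for a.e.\ $\tau$, that $\Dt u_h\to\Dt u$ in $L^2((0,T)\times\Omega)$ and $\rho_h*f(u)\to f(u)$ in $L^1(0,T;L^2(\Omega))$, and that $\|\Dt u_h\|_{L^\infty(0,T;L^2)}$ is bounded --- one obtains
\[
\tfrac12\|\xi_u(t)\|^2_{\Cal E}-\tfrac12\|\xi_u(s)\|^2_{\Cal E}+\gamma\int_s^t\|\Dt u\|^2_{L^2}\,d\tau=\int_s^t(g,\Dt u)\,d\tau-\int_s^t(f(u),\Dt u)\,d\tau
\]
for a.e.\ $0\le s\le t\le T$.

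It then remains to recognise the two terms on the right as exact derivatives. The one with $g$ is immediate: $t\mapsto(g,u(t))$ is Lipschitz, with $\tfrac{d}{dt}(g,u(t))=(g,\Dt u(t))$, because $\Dt u\in L^\infty(0,T;L^2)$. For the nonlinear term I would mollify once more and use the classical chain rule $\tfrac{d}{dt}(F(u_h(t)),1)=(f(u_h(t))\Dt u_h(t),1)$ (legitimate since $u_h$ is $C^1$ in $t$ with values in $L^{10}(\Omega)$), then pass to the limit, using $u_h\to u$ in $L^5(0,T;L^{10}(\Omega))$ and $u_h(\tau)\to u(\tau)$ in $L^6(\Omega)$ for a.e.\ $\tau$, to get $(F(u(t)),1)-(F(u(s)),1)=\int_s^t(f(u),\Dt u)\,d\tau$; the integrand being in $L^1(0,T)$, the map $t\mapsto(F(u(t)),1)$ is absolutely continuous. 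Combining, the functional $E(t):=\tfrac12\|\xi_u(t)\|^2_{\Cal E}+(F(u(t)),1)-(g,u(t))$ satisfies $E(t)-E(s)=-\gamma\int_s^t\|\Dt u\|^2_{L^2}\,d\tau$, hence \eqref{2.energy} holds a.e.; moreover $E$ is actually Lipschitz since $\|\Dt u\|^2_{L^2}\in L^\infty(0,T)$. The absolute continuity of $t\mapsto\|\xi_u(t)\|_{\Cal E}$ follows from the a.e.\ bound $\big|\tfrac{d}{dt}\|\xi_u(t)\|_{\Cal E}\big|\le\gamma\|\xi_u(t)\|_{\Cal E}+\|f(u(t))\|_{L^2}+\|g\|_{L^2}$, whose right-hand side lies in $L^1(0,T)$ (this also controls the derivative near the zeros of the norm). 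Finally, since $\xi_u\in L^\infty(0,T;\Cal E)$ and $\xi_u\in C([0,T];\Cal E_{-1})$, the map $\xi_u$ is weakly continuous into $\Cal E$, and weak continuity together with continuity of the $\Cal E$-norm upgrades this to $\xi_u\in C([0,T];\Cal E)$.

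The step I expect to be the main obstacle is the passage to the limit in the nonlinear terms at the critical quintic exponent --- verifying $\rho_h*f(u)\to f(u)$ and $f(u_h)\to f(u)$ in $L^1(0,T;L^2(\Omega))$, and the chain-rule limit for $F(u_h)$. These depend in an essential way on $u\in L^5(0,T;L^{10}(\Omega))$, equivalently on \eqref{1.str-reg}, and genuinely fail with only the energy regularity, since then the superposition operators $u\mapsto f(u)$ and $u\mapsto F(u)$ are not continuous in the relevant spaces. They do follow from the elementary bounds $|f(a)-f(b)|\le C(1+|a|^4+|b|^4)|a-b|$ and $|F(a)-F(b)|\le C(1+|a|^5+|b|^5)|a-b|$ combined with Hölder's inequality (with the exponents dictated by the $L^{10}$- and $L^2$-norms) and dominated convergence, passing to a subsequence if necessary.
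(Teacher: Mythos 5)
Your preparatory estimates (interpolation to $u\in L^5(0,T;L^{10})$, hence $f(u)\in L^1(0,T;L^2)$, $f(u)\Dt u\in L^1((0,T)\times\Omega)$) and the regularize--multiply--pass-to-the-limit scheme are exactly the content of the paper's argument; the only difference of device is that you mollify in time, while the paper projects in space, working with $u_N:=P_Nu$ which solves the projected equation, and this difference is not innocent, as explained below.

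The genuine gap is in the final upgrade from ``almost every $t$'' to the actual pointwise conclusions. Time mollification only gives $\xi_{u_h}(\tau)\to\xi_u(\tau)$ in $\Cal E$ and $u_h(\tau)\to u(\tau)$ in $L^6$ at Lebesgue points, so your energy balance and the identity $(F(u(t)),1)-(F(u(s)),1)=\int_s^t(f(u),\Dt u)\,d\tau$ are established only for a.e.\ $s,t$; this shows that $t\mapsto\|\xi_u(t)\|^2_{\Cal E}$ and $t\mapsto(F(u(t)),1)$ agree a.e.\ with absolutely continuous functions, which is weaker than the assertion of the proposition (and weaker than what is used later, where the energy equality is invoked at specific times). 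Your two attempts to close this are not valid as written: an a.e.\ derivative bound with $L^1$ majorant does not imply absolute continuity unless the function is already known to be AC (think of the Cantor function), and the concluding sentence invokes ``continuity of the $\Cal E$-norm'', which is precisely what has not been proved at the exceptional times; note also that Fatou/lower semicontinuity cannot rescue the $F$-term, since under \eqref{1.f5} alone $F$ has no sign, and $u_h(t)\to u(t)$ holds only weakly in $H^1_0$ at a bad time $t$, which gives no convergence of $(F(u_h(t)),1)$ or of the norm. This is exactly where the paper's choice of regularization does the work: since $\xi_u\in L^\infty(0,T;\Cal E)\cap C([0,T];\Cal E_{-1})$ is weakly continuous into $\Cal E$, one has $\xi_u(\tau)\in\Cal E$ for \emph{every} $\tau$, hence $P_N\xi_u(\tau)\to\xi_u(\tau)$ strongly in $\Cal E$ and $P_Nu(\tau)\to u(\tau)$ in $L^6$ at every single $\tau\in[0,T]$; multiplying the projected equation by $\Dt u_N$ and passing to the limit (using $P_Nf(u)\to f(u)$ in $L^1(\tau,t;L^2)$) then yields the integrated identities for all pairs $\tau\le t$, from which absolute continuity of both functions, the a.e.\ identity \eqref{2.energy}, and $\xi_u\in C([0,T];\Cal E)$ follow without circularity. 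Replacing your time mollification by this spectral truncation (or adding an argument of equivalent strength at the exceptional times) closes the gap; the rest of your proof is correct.
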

\begin{proof} Indeed, due to the definition of a Shatah-Struwe solution, growth restriction \eqref{1.f5} and interpolation inequality \eqref{1.int}, we have
%$$
\begin{equation}\label{2.f12}
\|f(u)\|_{L^1(0,T;L^2(\Omega))}\le C(1+\|u\|_{L^5(0,T;L^{10}(\Omega))})
\le C_T.
\end{equation}
%$$
Therefore, since $\Dt u\in L^\infty(0,T;L^2(\Omega))$, $f(u)\Dt u\in L^1([0,T]\times\Omega)$, then approximating the function $u$ by smooth ones and arguing in a standard way, we  see that for every $0\le\tau\le t\le T$,
$$
(F(u(t)),1)-(F(u(\tau)),1)=\int_\tau^t(f(u(s)),\Dt u(s))\,ds
$$
and, consequently, $t\to (F(u(t)),1)$ is absolutely continuous and
%$$
\begin{equation}\label{2.fabs}
\frac d{dt}(F(u(t)),1)=(f(u(t)),\Dt u(t))
\end{equation}
%$$
for almost all $t\ge0$.
\par
We are now ready to finish the proof of energy equality. To this end, we  take $u_N(t):=P_Nu(t)$, where $P_N$ is the orthoprojector on the first $N$ eigenvalues of the Laplacian. Then, this function solves
$$
\Dt^2 u_N+\alpha\Dt u_N+\gamma\Dt u_N-\Dx u_N=-P_N f(u)+P_Ng.
$$
Multiplying this equation by $\Dt u_N$ and integrating in space and time, we get the following analogue of energy equality:
%$$
\begin{multline}\label{2.nenergy}
\frac12\|\xi_{u_N}(t)\|^2_{\Cal E}-\frac12\|\xi_{u_N}(\tau)\|^2_{\Cal E}-(g,u_N(t))+(g,u_N(\tau))+\\+\int_\tau^t\gamma\|\Dt u_N(s)\|^2_{L^2}\,ds=-\int_\tau^t(P_Nf(u(s)),\Dt u_N(s))\,ds.
\end{multline}
%$$
Since, obviously, $\xi_{u_N}(t)\to\xi_u(t)$, $\xi_{u_N}(\tau)\to\xi_u(\tau)$ and $\Dt u_N\to\Dt u$ in $L^2(\tau,t;L^2(\Omega))$ strongly and
$P_Nf(u)\to f(u)$ weakly-star in $L^1(\tau,t;L^2(\Omega))$, we may pass to the limit $N\to\infty$ in \eqref{2.nenergy} and with the help of \eqref{2.fabs}  obtain that
%$$
\begin{equation}\label{2.eqv}
E(u(t))-E(u(\tau))+\gamma\int_\tau^t\|\Dt u(s)\|^2_{L^2}\,ds=0,
\end{equation}
%$$
where
$$
E(u):=\frac12\|\xi_u\|^2_{\Cal E}+(F(u),1)-(g,u).
$$
It remains to note that \eqref{2.eqv} is equivalent to \eqref{2.energy} and the energy equality is proved. The continuity of $\xi_u(t)$ as a $\Cal E$-valued function follows in a standard way from the energy equality.
\end{proof}
\begin{corollary}\label{Cor2.uni} Let the assumptions of Proposition \ref{Prop2.eneq} hold. Then the Shatah-Struwe solution $u(t)$, $t\in[0,T]$ is unique.
\end{corollary}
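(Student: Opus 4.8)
The plan is to compare two Shatah-Struwe solutions $u_1$ and $u_2$ of \eqref{0.eqmain} with the same initial data $\xi_0\in\Cal E$, and show their difference $w:=u_1-u_2$ vanishes. The function $w$ solves the linear-in-$w$ inhomogeneous wave equation
\begin{equation*}
\Dt^2 w+\gamma\Dt w-\Dx w=-(f(u_1)-f(u_2)),\qquad \xi_w(0)=0,
\end{equation*}
so by the dissipative energy/Strichartz estimate \eqref{1.disstr} (or just \eqref{1.linen} on a finite interval) we have, for $t\le T$,
\begin{equation*}
\|\xi_w(t)\|_{\Cal E}\le C\int_0^t\|f(u_1(s))-f(u_2(s))\|_{L^2}\,ds.
\end{equation*}
First I would estimate the right-hand side. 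By the mean value theorem and the growth bound \eqref{1.f5}, $|f(u_1)-f(u_2)|\le C(1+|u_1|^4+|u_2|^4)|w|$, and applying H\"older in space with exponents chosen so that $|u_i|^4$ lands in $L^{3}$ (i.e. $u_i\in L^{12}$) and $w\in L^{2}$... wait, $\frac{4}{12}+\frac12=\frac56\ne\frac12$, so instead I pair $|u_i|^4\in L^{3}$ with $w\in L^{6}$, giving $\frac13+\frac16=\frac12$. Thus
\begin{equation*}
\|f(u_1(s))-f(u_2(s))\|_{L^2}\le C\big(1+\|u_1(s)\|_{L^{12}}^4+\|u_2(s)\|_{L^{12}}^4\big)\|w(s)\|_{L^6},
\end{equation*}
and by Sobolev embedding $H^1_0\hookrightarrow L^6$ in three dimensions, $\|w(s)\|_{L^6}\le C\|\nabla w(s)\|_{L^2}\le C\|\xi_w(s)\|_{\Cal E}$.

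Combining the two displays, I get
\begin{equation*}
\|\xi_w(t)\|_{\Cal E}\le C\int_0^t \phi(s)\,\|\xi_w(s)\|_{\Cal E}\,ds,\qquad \phi(s):=1+\|u_1(s)\|_{L^{12}}^4+\|u_2(s)\|_{L^{12}}^4.
\end{equation*}
The crucial point is that the Shatah-Struwe regularity \eqref{1.str-reg} gives $u_i\in L^4(0,T;L^{12})$, hence $\|u_i(\cdot)\|_{L^{12}}^4\in L^1(0,T)$, so $\phi\in L^1(0,T)$. Gronwall's inequality in the integral form (valid for $L^1$ weights) then forces $\|\xi_w(t)\|_{\Cal E}\equiv 0$ on $[0,T]$, i.e. $u_1\equiv u_2$. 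The continuity $\xi_w\in C([0,T],\Cal E)$ established in Proposition \ref{Prop2.eneq} is what makes the Gronwall argument clean, since it guarantees the quantity being estimated is finite and continuous.

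I expect the only real subtlety to be checking that one does not actually need the sharper Strichartz-type norm for $w$ itself (unlike in the existence proof, where the smallness of the $L^4(L^{12})$ norm of the linear part was essential): here the energy estimate \eqref{1.linen} alone suffices because the nonlinear term is genuinely linear in $w$, and integrability of the weight $\phi$ comes for free from the already-assumed regularity of $u_1,u_2$. One should double-check the H\"older bookkeeping --- the natural pairing is $L^3\times L^6$ in space, matched against the $L^4$-in-time integrability of $\|u_i\|_{L^{12}}^4$ --- but this is routine. No Morawetz-type identity or energy equality is needed for uniqueness itself; only for the well-posedness package as a whole.
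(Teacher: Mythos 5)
Your proof is correct and follows essentially the same route as the paper: the same H\"older estimate $\|f(u_1)-f(u_2)\|_{L^2}\le C(1+\|u_1\|_{L^{12}}^4+\|u_2\|_{L^{12}}^4)\|w\|_{L^6}$ with $H^1_0\subset L^6$, followed by Gronwall with the $L^1$-in-time weight supplied by the $L^4(0,T;L^{12})$ regularity of the two solutions. The only difference is cosmetic: the paper multiplies the difference equation by $\Dt w$ (justifying this as in Proposition \ref{Prop2.eneq}) to obtain a differential inequality, whereas you invoke the linear estimate \eqref{1.linen} in Duhamel form, which is legitimate here since $f(u_1)-f(u_2)\in L^1(0,T;L^2(\Omega))$ and $\xi_w\in C([0,T],\Cal E)$, and the concluding Gronwall step is the same.
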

\begin{proof} Indeed, let $u(t)$ and $v(t)$ be two Shatah-Struwe solutions of equation \eqref{0.eqmain} on the interval $t\in[0,T]$ and let $w(t)=u(t)-v(t)$. Then this function solves
%$$
\begin{equation}\label{2.dif}
\Dt^2 w+\gamma\Dt w-\Dx w+[f(u)-f(v)]=0.
\end{equation}
%$$
Multiplying this equation by $\Dt w$ and integrating over $x\in\Omega$ (which is justified exactly as in Proposition \ref{Prop2.eneq}), we end up with
%$$
\begin{equation}\label{2.difeq}
\frac12\frac d{dt}\|\xi_w(t)\|^2_{\Cal E}+\gamma\|\Dt w(t)\|^2_{L^2}+(f(u)-f(v),\Dt w)=0.
\end{equation}
%$$
Using the growth restriction \eqref{1.f5}, the Sobolev embedding $H^1_0\subset L^6$ and the H\"older inequality, we estimate the last term at the left-hand side of \eqref{2.difeq} as follows:
%$$
\begin{multline}\label{2.SSdif}
|(f(u)-f(v),\Dt w)|\le C((1+|u|^4+|v|^4)|w|,|\Dt w|)\le\\\le C(1+\|u\|_{L^{12}}^4+\|v\|^4_{L^{12}})\|w\|_{L^6}\|\Dt w\|_{L^2}\le C(1+\|u\|_{L^{12}}^4+\|v\|^4_{L^{12}})\|\xi_w\|^2_{\Cal E}.
\end{multline}
%$$
Since the $L^4(0,T;L^{12}(\Omega))$-norms of $u$ and $v$ are finite by the definition of the Shatah-Struwe solutions, inserting the obtained estimate into equality \eqref{2.difeq} and applying the Gronwall inequality, we see that $\xi_w(t)\equiv0$ and the corollary is proved.
\end{proof}
\begin{corollary}\label{Cor2.dis} Let the assumptions of Proposition \ref{Prop2.eneq} hold and let, in addition, the nonlinearity $f$ satisfies the following dissipativity assumption:
%$$
\begin{equation}\label{2.fdis}
f(u)u\ge -C,\ \ u\in\R.
\end{equation}
%$$
Then the Shatah-Struwe solution $u(t)$, $t\in[0,T]$ of problem \eqref{0.eqmain} satisfies the following dissipative estimate:
%$$
\begin{equation}\label{2.disest}
\|\xi_u(t)\|_{\Cal E}\le Q(\|\xi_u(0)\|_{\Cal E})e^{-\alpha t}+Q(\|g\|_{L^2}),\ \ t\in[0,T],
\end{equation}
%$$
where the monotone function $Q$ and positive constant $\alpha$ are independent of $t$, $T$ and $u$.
\end{corollary}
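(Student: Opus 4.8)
The plan is to upgrade the energy identity of Proposition~\ref{Prop2.eneq} --- which by itself only yields that the energy is nonincreasing --- into a genuine dissipative estimate by the classical device of additionally testing equation~\eqref{0.eqmain} with $\alpha u$ for a small $\alpha>0$, which injects coercive damping into the full norm $\|\xi_u\|_{\Cal E}$ and not merely into $\|\Dt u\|_{L^2}$. Accordingly I introduce the modified energy
$$
\mathcal L_\alpha(u):=\tfrac12\|\xi_u\|^2_{\Cal E}+(F(u),1)-(g,u)+\alpha(\Dt u,u)+\tfrac{\alpha\gamma}{2}\|u\|^2_{L^2}.
$$
Integrating the dissipativity assumption~\eqref{2.fdis} shows $F(u)\ge-\eb u^2-C_\eb$ for every $\eb>0$, so by the Poincar\'e and Young inequalities one gets the lower bound $\tfrac14\|\xi_u\|^2_{\Cal E}-C(1+\|g\|^2_{L^2})\le\mathcal L_\alpha(u)$ for $\alpha,\eb$ small, while the upper bound of $\mathcal L_\alpha(u(0))$ in terms of $\|\xi_u(0)\|_{\Cal E}$ will follow from the Sobolev embedding $H^1_0(\Omega)\subset L^6(\Omega)$ together with the polynomial bound $|F(u)|\le C(1+|u|^6)$ implied by~\eqref{1.f5}. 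The multiplication by $\Dt u$ is already licensed by Proposition~\ref{Prop2.eneq}; multiplication by $\alpha u$ is justified by the identical $P_N$-truncation argument, and is actually immediate for this term because $u\in L^\infty(0,T;H^1_0)\subset L^\infty(0,T;L^6)$ forces $f(u)\in L^\infty(0,T;L^{6/5})$, hence $(f(u),u)\in L^\infty(0,T)$, with no appeal to Strichartz regularity needed here.

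Carrying out the computation, I add the energy identity~\eqref{2.energy} (which reads $\tfrac{d}{dt}E(u)=-\gamma\|\Dt u\|^2_{L^2}$) to the relation produced by testing with $\alpha u$; using $(\Dt^2 u,u)=\tfrac{d}{dt}(\Dt u,u)-\|\Dt u\|^2_{L^2}$ and $(-\Dx u,u)=\|\Nx u\|^2_{L^2}$, the contributions $\pm\alpha\gamma(\Dt u,u)$ cancel and one arrives at
$$
\tfrac{d}{dt}\mathcal L_\alpha(u)+(\gamma-\alpha)\|\Dt u\|^2_{L^2}+\alpha\|\Nx u\|^2_{L^2}+\alpha(f(u)u,1)=\alpha(g,u).
$$
Choosing $\alpha\le\gamma/2$, estimating $\alpha(g,u)$ by $\tfrac{\alpha}{4}\|\Nx u\|^2_{L^2}+C\|g\|^2_{L^2}$, and using the dissipativity of $f$ to dominate the relevant combination of $-\alpha(f(u)u,1)$ with the potential term of $\beta\mathcal L_\alpha$ by a constant, I obtain, after absorbing the lower-order cross terms for $\alpha$ small, a differential inequality
$$
\tfrac{d}{dt}\mathcal L_\alpha(u)+\beta\,\mathcal L_\alpha(u)\le C(1+\|g\|^2_{L^2}),\qquad\text{a.e. }t\in[0,T],
$$
with $\beta>0$ and $C$ independent of $u,t,T$. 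Gronwall's lemma then gives $\mathcal L_\alpha(u(t))\le\mathcal L_\alpha(u(0))e^{-\beta t}+\tfrac C\beta(1+\|g\|^2_{L^2})$; combining this with the two-sided bounds on $\mathcal L_\alpha$ recorded above, taking square roots, and relabelling the monotone function $Q$ yields~\eqref{2.disest}, the decay rate there being $\beta/2$.

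The routine parts are the justification of the $\alpha u$-multiplication and the Gronwall bookkeeping; the point that needs real care is the treatment of the potential term $(F(u),1)$ sitting inside $\mathcal L_\alpha$. Since $F$ may grow like $u^6$, one cannot absorb $\beta(F(u),1)$ into the negative $\|\Nx u\|^2_{L^2}$-term, and the differential inequality closes only because the dissipation $f(u)u$ controls $F(u)$ from above in the sense $f(u)u\ge\kappa F(u)-C$ with $\kappa>0$ --- a property built into the standing structural hypotheses on $f$ (it is precisely this, rather than the bare bound $f(u)u\ge-C$, that is used when forming $-\alpha(f(u)u,1)+\beta(F(u),1)$ with $\beta=\alpha\kappa$). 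Keeping track of the constants there, so that the final bound separates cleanly into a term decaying from the initial data and a term depending only on $\|g\|_{L^2}$, is the crux of the argument.
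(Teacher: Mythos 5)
Your route is exactly the paper's: the entire proof given there is the single remark that \eqref{2.disest} follows by multiplying \eqref{0.eqmain} by $\Dt u+\beta u$ for a small $\beta>0$ (the multiplication being licensed by Proposition \ref{Prop2.eneq}) and applying a Gronwall-type inequality, with details delegated to \cite{BV,CV,ZelDCDS}. Your modified energy $\mathcal{L}_\alpha$, the identity you derive for it, the lower bound via $F(u)\ge-\eb u^2-C_\eb$, and the justification of the multiplication by $\alpha u$ through the $P_N$-truncation together with $f(u)\in L^\infty(0,T;L^{6/5}(\Omega))$ are a faithful and correctly executed version of that scheme.

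The one point that needs to be said differently concerns the inequality $f(u)u\ge\kappa F(u)-C$, on which, as you yourself observe, the closing of the differential inequality really rests. This is \emph{not} among the hypotheses of the corollary, which assumes only the growth bound \eqref{1.f5} and the sign condition \eqref{2.fdis}, and it does not follow from them: one can take $f\ge0$ on $\R_+$ (so $f(u)u\ge0$), within the growth restriction \eqref{1.f5}, which vanishes on longer and longer intervals going to infinity while $F$ has already grown like $|u|^6$ before each such interval; at points of these intervals $f(u)u=0$ while $F(u)\sim|u|^6$, so no bound $f(u)u\ge\kappa F(u)-C$ holds. Under \eqref{2.fdis} alone the dissipation term $-\alpha(f(u)u,1)\ge-C$ gives no handle on $\beta(F(u),1)$, which may be of order $\|u\|_{L^6}^6$, so the inequality $\frac d{dt}\mathcal{L}_\alpha+\beta\mathcal{L}_\alpha\le C(1+\|g\|^2_{L^2})$ does not close as written. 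Thus, strictly speaking, you prove \eqref{2.disest} under an additional structural hypothesis; calling it ``built into the standing structural hypotheses'' is accurate only for the settings the paper actually works in later, namely \eqref{2.extradis}, whose second condition gives $f(u)u\ge 4F(u)-C$ (Theorem \ref{Th2.SSgwp} and Section \ref{s5}), or \eqref{00.3}, whose third condition $f'(v)\ge-K+\delta|v|^{p+1}$ yields an even stronger coercive link. In fairness, the paper's one-line proof and the references it cites operate under hypotheses of exactly this kind, so you have reconstructed the intended argument and put your finger precisely on the extra condition it needs; just state explicitly that it is an extra assumption relative to \eqref{2.fdis}, not a consequence of it.
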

Indeed, the energy estimate \eqref{2.disest} follows in a standard way by multiplication of equation \eqref{0.eqmain} by $\Dt u+\beta u$, where $\beta$ is a properly chosen positive constant, and integration in $x$ (the validity of that is verified in Proposition \ref{Prop2.eneq}) followed by application of the Gronwall type inequality, see \cite{BV,CV,ZelDCDS} for more details.
\par
The next corollary shows that Shatah-Struwe solutions can be obtained as a limit of Galerkin approximations.

\begin{corollary}\label{Cor2.SSGalerkin} Let the assumptions of Proposition \ref{Prop2.eneq} hold, the nonlinearity $f$ satisfy the dissipativity assumption \eqref{2.fdis} and let $u(t)$ be a Shatah-Struwe solution of problem \eqref{0.eqmain}. Assume also that the initial data $\xi_{u_N}(0)\in P_N\Cal E$ for the Galerkin approximations $u_N(t)$ are chosen in such way that
$$
\xi_{u_N}(0)\to\xi_u(0)
$$
strongly in $\Cal E$. Then, the Galerkin solutions $u_N(t)$ converge to the solution $u(t)$:
%$$
\begin{equation}\label{2.SSstrong}
\xi_{u_N}(t)\to\xi_u(t)
\end{equation}
%$$
strongly in $\Cal E$ for every $t\in[0,T]$. In particular, any Shatah-Struwe solution is a Galerkin solution of problem \eqref{0.eqmain}.
\end{corollary}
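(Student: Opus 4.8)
The plan is to compare each Galerkin solution $u_N$ directly with the given Shatah--Struwe solution $u$: I would estimate the difference $\theta_N:=u_N-u$ by the Strichartz estimate on short time intervals, run a bootstrap closing on the smallness of $\|\xi_{\theta_N}\|_{\Cal E}$, and iterate over the finitely many intervals covering $[0,T]$. As preliminaries: since $f$ satisfies \eqref{2.fdis}, the standard energy estimate for \eqref{1.galerkin} rules out blow-up, so each $u_N$ is defined on all of $[0,T]$ with $\xi_{u_N}\in C([0,T];\Cal E)$ (and, for fixed $N$, $u_N\in C([0,T];L^{12}(\Omega))$ since $P_NL^2(\Omega)$ is finite dimensional); from Proposition~\ref{Prop2.eneq}, $\xi_u\in C([0,T];\Cal E)$ and, by \eqref{2.f12}, $f(u)\in L^1(0,T;L^2(\Omega))$. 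Since the $e_i$ are orthogonal in both $L^2(\Omega)$ and $H^1_0(\Omega)$, we have $\|P_N\|_{\Cal E\to\Cal E}=1$ and $P_N\xi_u(t)\to\xi_u(t)$ in $\Cal E$ uniformly on $[0,T]$. Finally, since $s\mapsto\|u(s)\|^4_{L^{12}}$ is integrable, there is a step length $\delta\in(0,1]$, depending only on $u$, with $\|u\|^4_{L^4(a,a+\delta;L^{12}(\Omega))}\le\eb_0$ whenever $0\le a\le a+\delta\le T$, where $\eb_0$ is a small constant fixed below.

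Subtracting \eqref{0.eqmain} from \eqref{1.galerkin} and using $u_N\in P_NL^2(\Omega)$, the difference $\theta_N=u_N-u$ solves
\[
\Dt^2\theta_N+\gamma\Dt\theta_N-\Dx\theta_N=-(I-P_N)g-(I-P_N)f(u)-P_N\bigl[f(u_N)-f(u)\bigr],
\]
with $\xi_{\theta_N}(0)=\xi_{u_N}(0)-\xi_u(0)\to0$ in $\Cal E$. The first two terms on the right tend to $0$ in $L^1(0,T;L^2(\Omega))$ as $N\to\infty$ (because $g\in L^2(\Omega)$, $f(u)\in L^1(0,T;L^2(\Omega))$ and $\|I-P_N\|_{L^2\to L^2}\le1$, using dominated convergence for the $f(u)$ term). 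For the last term I would use \eqref{1.f5}, the pointwise bound $|u_N|\le|\theta_N|+|u|$, the embedding $H^1_0\subset L^6$, and H\"older in time to obtain, on any $[\sigma,t]$ with $t-\sigma\le\delta$,
\[
\bigl\|P_N[f(u_N)-f(u)]\bigr\|_{L^1(\sigma,t;L^2)}\le C\Bigl(\sup_{[\sigma,t]}\|\xi_{\theta_N}\|_{\Cal E}\Bigr)\bigl(\delta+\|\theta_N\|^4_{L^4(\sigma,t;L^{12})}+\|u\|^4_{L^4(\sigma,t;L^{12})}\bigr).
\]

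Now I would run the bootstrap. Fixing $\sigma=k\delta$ and setting $Z_N(t):=\sup_{[\sigma,t]}\|\xi_{\theta_N}\|_{\Cal E}+\|\theta_N\|_{L^4(\sigma,t;L^{12}(\Omega))}$ (continuous in $t$), applying \eqref{1.linen} and \eqref{1.linstr} to the difference equation on $[\sigma,t]$ and inserting the bounds above together with $\|u\|^4_{L^4(\sigma,t;L^{12})}\le\eb_0$ gives
\[
Z_N(t)\le C\|\xi_{\theta_N}(\sigma)\|_{\Cal E}+o_N(1)+C(\delta+\eb_0)Z_N(t)+CZ_N(t)^5,
\]
with $o_N(1)\to0$ uniformly in $k$. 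Choosing $\delta,\eb_0$ so small that $C(\delta+\eb_0)\le\frac12$, this reduces to $Z_N(t)\le D_{N,k}+2CZ_N(t)^5$, $D_{N,k}:=2C\|\xi_{\theta_N}(\sigma)\|_{\Cal E}+2o_N(1)$. Since $Z_N(\sigma)=\|\xi_{\theta_N}(\sigma)\|_{\Cal E}$ is small whenever $D_{N,k}$ is, the usual continuity argument yields $Z_N(t)\le 2D_{N,k}$ on $[\sigma,\sigma+\delta]$ for all large $N$. Starting from $\|\xi_{\theta_N}(0)\|_{\Cal E}\to0$ and iterating over the finitely many steps $[k\delta,(k+1)\delta]$ covering $[0,T]$ — the smallness of $\|\xi_{\theta_N}(k\delta)\|_{\Cal E}$ at each step being supplied by the previous one — we obtain $\|\theta_N\|_{L^4(0,T;L^{12})}\to0$ and $\sup_{t\in[0,T]}\|\xi_{u_N}(t)-\xi_u(t)\|_{\Cal E}\to0$, which is \eqref{2.SSstrong}. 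Choosing in particular $\xi_{u_N}(0)=P_N\xi_u(0)$ exhibits $u$ as a $C([0,T];\Cal E)$-limit (hence a weak-star $L^\infty(0,T;\Cal E)$-limit) of Galerkin approximations, so $u$ is a Galerkin solution.

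The main obstacle is the phenomenon of Remark~\ref{Rem2.difficult}: since $\|u_N\|_{L^4(0,T;L^{12})}$ cannot be controlled by the energy norm alone, a single bootstrap on the whole of $[0,T]$ cannot be closed, and one must instead exploit the \emph{a priori} finiteness of $\|u\|_{L^4(0,T;L^{12})}$ of the given Shatah--Struwe solution to fix a uniform step length $\delta$, propagating the strong convergence from one subinterval to the next. A minor technical reason for taking $\theta_N=u_N-u$ rather than $u_N-P_Nu$ is that the latter would require estimating $\|P_Nu\|_{L^{12}}$, whereas $P_N$ is not uniformly bounded on $L^{12}(\Omega)$; with $\theta_N=u_N-u$ the projectors appear only in the harmless $L^2$-bounded terms $(I-P_N)g$ and $(I-P_N)f(u)$.
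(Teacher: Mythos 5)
Your argument is correct, but it follows a genuinely different route from the paper. You prove \eqref{2.SSstrong} by a direct quantitative perturbation estimate: you write the equation for $\theta_N=u_N-u$ (the sign in front of $(I-P_N)f(u)$ is off, but harmless since you only use that this consistency term vanishes in $L^1(0,T;L^2)$), apply the energy and Strichartz bounds \eqref{1.linen}, \eqref{1.linstr} on subintervals of a length $\delta$ fixed through the absolute continuity of $t\mapsto\|u(t)\|^4_{L^{12}}$ for the \emph{given} Shatah--Struwe solution, and close a bootstrap on $Z_N$ interval by interval; the dissipativity \eqref{2.fdis} enters only to rule out blow-up of the Galerkin ODEs. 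The paper instead argues softly: it extracts a weak-star limit $\bar u$ of the Galerkin approximations, identifies $\bar u$ with $u$ on a short initial interval via the construction of Proposition \ref{Prop2.exist} and the uniqueness of Corollary \ref{Cor2.uni}, introduces the maximal coincidence time $T^*$, and upgrades weak to strong convergence of $\xi_{u_N}(T^*)$ by passing to the limit in the Galerkin energy identity \eqref{2.uN}, using weak lower semicontinuity, the Fatou lemma (this is where \eqref{2.fdis} is used there), and the energy \emph{equality} satisfied by $u$, and then continues past $T^*$. Your approach buys more: it yields convergence uniformly in $t$ (i.e.\ in $C(0,T;\Cal E)$, which the paper only states in Remark \ref{Rem2.uniform}) together with $\|u_N-u\|_{L^4(0,T;L^{12})}\to0$, and it avoids the energy-method machinery altogether; the paper's softer argument, on the other hand, is exactly the template reused in Section \ref{s5} for asymptotic compactness, where no quantitative Strichartz control of the limiting trajectory is available a priori, which is presumably why the authors present it here.
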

\begin{proof} Indeed, due to the energy estimate \eqref{2.disest} for the Galerkin approximations $u_N(t)$, we know that the $L^\infty(0,T;\Cal E)$-norms of these solutions are uniformly bounded. Thus, we may assume without loss of generality, that $u_N(t)\to\bar u(t)$ weakly star in
$L^\infty(0,T;\Cal E)$, where $\bar u(t)$ is a weak energy solution of equation \eqref{0.eqmain}. Moreover, arguing in a standard way, we see that
%$$
\begin{equation}\label{2.weak}
\xi_{u_N}(t)\rightharpoonup\xi_{\bar u}(t)
\end{equation}
%$$
in $\Cal{E}$ for every $t\in[0,T]$. In addition, from the proof of Proposition \ref{Prop2.exist}, we know that $\bar u(t)$ is  a Shatah-Struwe solution for $t\in[0,T_1]$ for some small, but positive $T_1$. Then, by Corollary \ref{Cor2.uni}, $u(t)=\bar u(t)$, $t\in[0,T_1]$. Introduce the time
$$
T^*:=\sup\{t\in[0,T],\ u(s)=\bar u(s),\ s\le t\}.
$$
We need to prove that $T^*=T$. To this end, we note that
$$
\xi_{u_N}(T^*)\to\xi_u(T^*).
$$
The weak convergence follows from \eqref{2.weak} and to verify the strong convergence, we check that
%$$
\begin{equation}\label{2.strong}
\|\xi_{u_N}(T^*)\|_{\Cal E}\to\|\xi_{u}(T^*)\|_{\Cal E}.
\end{equation}
%$$
Assume that \eqref{2.strong} is wrong, then without loss of generality, we may assume that
%$$
\begin{equation}\label{2.wrong}
\|\xi_{u_N}(T^*)-\xi_u(T^*)\|_{\Cal E}\ge\eb_0>0.
\end{equation}
%$$
Then, we want to pass to the limit in the energy equality
%$$
\begin{multline}\label{2.uN}
\frac12\|\xi_{u_N}(T^*)\|^2_{\Cal E}+(F(u_N(T^*)),1)-(g,u_N(T^*))+\gamma\int_0^{T^*}\|\Dt u_N(t)\|^2_{L^2}\,dt=\\=\frac12\|\xi_{u_N}(0)\|^2_{\Cal E}+(F(u_N(0)),1)-(g,u_N(0))
\end{multline}
%$$
for Galerkin approximations $u_N$.
Indeed, since we have the {\it strong} convergence $\xi_{u_N}(0)\to\xi_u(0)$, the right-hand side of \eqref{2.uN} tends to the analogous expression for $u$. To pass to the limit in the left hand side, we use the inequality
%$$
\begin{equation}\label{2.weakxi}
\|\xi_{u}(T^*)\|_{\Cal E}\le \liminf_{N\to\infty}\|\xi_{u_N}(T^*)\|_{\Cal E}
\end{equation}
%$$
which is valid due to the weak convergence \eqref{2.weak}, and
%$$
\begin{equation}\label{2.weakF}
(F(u(T^*),1)\le \liminf_{N\to\infty}(F(u_N(T^*),1),
\end{equation}
%$$
due to the fact that $u_N(T^*)\to u(T^*)$ almost everywhere, assumption \eqref{2.fdis} and the Fatou lemma. Thus,
%$$
\begin{multline}\label{2.uNeq}
\frac12\|\xi_{u}(T^*)\|^2_{\Cal E}+(F(u(T^*),1)-(g,u(T^*))+\gamma\int_0^{T^*}\|\Dt u(t)\|^2_{L^2}\,dt\le\\\le\frac12\|\xi_{u}(0)\|^2_{\Cal E}+(F(u(0),1)-(g,u(0)).
\end{multline}
%$$
On the other hand, $u(t)$ is a Shatah-Struwe solution and, by this reason it satisfies the energy equality. Thus, the inequality in \eqref{2.uNeq} is actually the equality which is possible only when both \eqref{2.weakxi} and \eqref{2.weakF} are also equalities.
\par
 In particular, for some subsequence $N_k$, we have $\|\xi_{u_{N_k}}(T^*)\|_{\Cal E}\to\|\xi_{u}(T^*)\|_{\Cal E}$ and together with \eqref{2.weak}, we have the strong convergence $\xi_{u_{N_k}}(T^*)\to\xi_u(T^*)$ which contradicts \eqref{2.wrong}. Thus, the {\it strong} convergence $\xi_{u_N}(T^*)\to\xi_{u}(T^*)$ is proved.
\par
Finally, using this strong convergence and arguing as in Proposition \ref{Prop2.exist}, we see that $\bar u(t)$ is a Shatah-Struwe solution on the interval $[T^*,T^*+T_2]$, for some positive $T_2$ and, therefore, should coincide with $u(t)$ on that interval as well. This contradiction shows that, actually, $T^*=T$ and $u(t)=\bar u(t)$ for all $t\in[0,T]$. The convergence \eqref{2.SSstrong} can be then proved based on the energy equality exactly as it was done before for the case $t=T^*$. Corollary \ref{Cor2.SSGalerkin} is proved.
\end{proof}
\begin{remark}\label{Rem2.uniform} Arguing in a bit more accurate way, one can show that, under assumptions of the previous corollary, $\xi_{u_N}\to\xi_u$ strongly in $C(0,T;\Cal E)$ as well.
\end{remark}
We are  ready to state the main result of the section on the global existence of Shatah-Struwe solutions.
\begin{theorem}\label{Th2.SSgwp} Let $g\in L^2(\Omega)$ and the nonlinearity $f$ satisfy
assumptions \eqref{1.f5} and \eqref{2.fdis}. Assume also that the following extra dissipativity assumptions are satisfied
%$$
\begin{equation}\label{2.extradis}
\begin{cases}
1. \ \ F(u)\ge-C+\kappa|u|^6,\ \ \kappa>0,\\
2. \ \ f(u)u-4F(u)\ge-C.
\end{cases}
\end{equation}
%$$
Then, for any $\xi_0\in\Cal E$ there exists a unique Shatah-Struwe solution $u(t)$ defined for all $t\in\R_+$ and this solution satisfies \eqref{2.disest} as well as the following Strichartz type estimate:
%$$
\begin{equation}\label{2.badSS}
\|u\|_{L^4(0,T;L^{12}(\Omega))}\le Q(\xi_0,T),\ \ T\ge0
\end{equation}
%$$
for some function $Q$ monotone increasing in $T$.
\end{theorem}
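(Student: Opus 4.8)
The plan is to reduce the theorem to the local theory already established above (Propositions \ref{Prop2.exist} and \ref{Prop2.eneq}, Corollaries \ref{Cor2.uni} and \ref{Cor2.dis}) combined with the global well-posedness of the quintic wave equation in smooth bounded domains proved in \cite{plan1,plan2}; the two quantitative bounds \eqref{2.disest} and \eqref{2.badSS} are then immediate.

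\textbf{Step 1 (the maximal solution and a continuation criterion).} By Proposition \ref{Prop2.exist} together with the uniqueness of Corollary \ref{Cor2.uni}, every $\xi_0\in\Cal E$ generates a unique Shatah--Struwe solution on a maximal interval $[0,T_{\max})$, $T_{\max}=T_{\max}(\xi_0)\in(0,+\infty]$, with $\|u\|_{L^4(0,T;L^{12}(\Omega))}<\infty$ for each $T<T_{\max}$. The only obstruction to globality is a blow-up of this Strichartz norm: if $\|u\|_{L^4(0,T_{\max};L^{12}(\Omega))}<\infty$, then by \eqref{1.f5} and the interpolation inequality \eqref{1.int} one has $f(u)\in L^1(0,T_{\max};L^2(\Omega))$, so the Duhamel representation and the linear energy estimate \eqref{1.linen} give $\xi_u\in C([0,T_{\max}];\Cal E)$, and applying Proposition \ref{Prop2.exist} with initial time $T_{\max}$ extends the solution, contradicting maximality.

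\textbf{Step 2 ($T_{\max}=+\infty$).} Assume $T_{\max}<\infty$. By Corollary \ref{Cor2.dis} (which needs only \eqref{1.f5} and \eqref{2.fdis}) the energy $\|\xi_u(t)\|_{\Cal E}$ stays bounded on $[0,T_{\max})$, and the first inequality in \eqref{2.extradis} turns this into an a priori bound for $\|u(t)\|_{L^6}$, so nothing in the energy functional degenerates at $T_{\max}$. To exclude the blow-up of the Strichartz norm one uses the non-concentration of energy: via the Morawetz--Pohozhaev (energy-flux) identity adapted to the Dirichlet problem on a smooth bounded domain --- here the second inequality in \eqref{2.extradis}, $f(u)u-4F(u)\ge-C$, is precisely the structural hypothesis giving the correct sign of the interior term --- one shows that, for every $x_0\in\overline\Omega$,
\[
\lim_{t\to T_{\max}^{-}}\int_{B(x_0,\,T_{\max}-t)\cap\Omega}\big(\tfrac12|\Nx u|^2+\tfrac12|\Dt u|^2+F(u)\big)(t,x)\,dx=0 .
\]
Given this, one rescales and localizes the solution near each point of the slice $\{t=T_{\max}\}$ so that the localized data have arbitrarily small energy, and then applies the small-data branch of the local theory --- recall from Remark \ref{Rem2.difficult} that for small energy data the lifespan and the Strichartz norm are controlled by the energy norm alone, directly from \eqref{1.disstr}. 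This produces a uniform lower bound for the existence time of the Strichartz norm past $T_{\max}$, the desired contradiction. This argument is the bounded-domain version --- carried out in detail in \cite{plan1,plan2} --- of Struwe \cite{St}, Grillakis \cite{Grill} and Shatah--Struwe \cite{SS}, and we refer there for the technical details. Hence $T_{\max}=+\infty$.

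\textbf{Step 3 (the estimates) and the main difficulty.} Once the solution is global, \eqref{2.disest} is exactly Corollary \ref{Cor2.dis} on $\R_+$, and \eqref{2.badSS} holds with $Q(\xi_0,T):=\|u\|_{L^4(0,T;L^{12}(\Omega))}$, which is finite for each finite $T$ because $u|_{[0,T]}$ is a Shatah--Struwe solution, and monotone in $T$ since it is an $L^4_t$-norm over a growing time interval. The single genuinely hard point is the non-concentration estimate of Step 2 --- the Morawetz--Pohozhaev identity for the Dirichlet problem and the control of its boundary terms --- which is the content of \cite{plan1,plan2}; Steps 1 and 3 are routine manipulations with the energy identity, estimate \eqref{1.disstr} and Gronwall's lemma.
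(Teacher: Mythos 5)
Your proposal is correct and follows essentially the same route as the paper: the local theory, uniqueness and energy estimate of Section \ref{s2} reduce everything to ruling out blow-up of the Strichartz norm, and the key non-concentration step via the Morawetz--Pohozhaev identity adapted to bounded domains is delegated to \cite{plan1,plan2}, exactly as the paper does (which likewise only observes that the damped, forced, general quintic case follows by repeating those arguments verbatim). Your additional remarks on the continuation criterion and on which of the two conditions in \eqref{2.extradis} enters where are consistent with this scheme and do not change the argument.
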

Indeed, it only remains to prove the global solvability for \eqref{0.eqmain} in the class of Shatah-Struwe solutions.
The proof of this fact was given in \cite{plan1} for the particular case $f(u)=u^5$ and $\gamma=0$, $g=0$ and is based on proving the energy non-concentration for $u(t)$ via the Morawetz type identities adapted to the case of bounded domains. The general case can be treated by repeating verbatim   the arguments of \cite{plan1} and is left  to the reader. Note also that the extra dissipativity assumptions \eqref{2.extradis} do not allow the function $f(u)$ to grow slower than $u^5$, however, it is actually not a big restriction since the most difficult is exactly the case of critical quintic growth rate and as we will see in the next section, we do not  need  Theorem \ref{Th2.SSgwp} to treat the subcritical case.
\par
We are now ready to define the solution semigroup $S(t):\Cal E\to\Cal E$ associated with equation \eqref{0.eqmain}:
%$$
\begin{equation}\label{2.semigroup}
S(t)\xi_0:=\xi_u(t),
\end{equation}
%$$
where $u(t)$ is a unique Shatah-Struwe solution of \eqref{0.eqmain}. Then, according to Theorem \ref{Th2.SSgwp}, this semigroup is well-defined
(and even locally Lipschitz continuous in $\Cal E$, see Corollary \ref{Cor2.uni}) and is dissipative:
%$$
\begin{equation}\label{2.disSS}
\|S(t)\xi_0\|_{\Cal E}\le Q(\|\xi_0\|_{\Cal E})e^{-\alpha t}+Q(\|g\|_{L^2}).
\end{equation}
%$$
\begin{remark}\label{Rem2.problems} The long time behavior of the solution semigroup $S(t)$ will be studied in the next sections. However, it worth to note here that we have the dissipative estimate for the {\it energy} norm  only and Theorem \ref{Th2.SSgwp} gives us
no control of the Strichartz norm as $T\to\infty$ (it's proof is a typical proof ad absurdum which gives no bounds on the function $Q$ in estimate \eqref{2.badSS}). By this reason, the Strichartz estimate may a priori disappear when passing to the limit $t\to\infty$ and the attractor may consist not only of Shatah-Struwe solutions. The proof that it is actually not the case is one the main tasks of the present paper.
\end{remark}
We conclude this section by one more result which shows that a Shatah-Struwe solution is more regular if the initial data is smoother.

\begin{proposition}\label{Prop2.smooth} Let the assumptions of Theorem \ref{Th2.SSgwp} hold and let, in addition, the non-linearity satisfy the following condition:
%$$
\begin{equation}\label{2.technical}
f'(u)\ge-K
\end{equation}
%$$
and the initial data be more smooth, i.e.,
%$$
\begin{equation}\label{2.inism}
\xi_0\in\Cal E_1:=[H^2(\Omega)\cap H^1_0(\Omega)]\times H^1_0(\Omega).
\end{equation}
%$$
Then, the corresponding Shatah-Struwe solution is more regular as well:
%$$
\begin{equation}\label{2.smSS}
 \xi_u(t)=(u(t),u_t(t))\in\Cal E_1
\end{equation}
%$$
for all $t\ge0$.
\end{proposition}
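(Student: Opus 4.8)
The plan is to differentiate the equation formally in time, set $\theta=\Dt u$, and show that $\theta$ together with $\Dt\theta$ stays bounded in $\Cal E$; elliptic regularity for the stationary Laplacian then upgrades this to $\xi_u(t)\in\Cal E_1$. Concretely, $\theta$ solves the linear (in $\theta$) equation
\begin{equation}\label{2.thetaeq}
\Dt^2\theta+\gamma\Dt\theta-\Dx\theta+f'(u)\theta=0,\ \ \xi_\theta(0)=(u_0',u_0''),
\end{equation}
where $u_0''=\Dx u_0-f(u_0)+g\in L^2(\Omega)$ precisely because of the assumed extra smoothness \eqref{2.inism} and the fact that $f(u_0)\in L^2$ (which follows from $H^2\subset L^\infty$ in dimension three together with the quintic growth \eqref{1.f5}). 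So the initial data $\xi_\theta(0)\in\Cal E$ is well defined with norm controlled by $\|\xi_0\|_{\Cal E_1}$.

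Next I would multiply \eqref{2.thetaeq} by $\Dt\theta+\beta\theta$ for a small $\beta>0$ and integrate over $\Omega$. The only genuinely nonlinear term is $(f'(u)\theta,\Dt\theta+\beta\theta)$. The sign condition \eqref{2.technical}, $f'(u)\ge-K$, lets me write $\beta(f'(u)\theta,\theta)\ge-\beta K\|\theta\|_{L^2}^2$, which is harmlessly absorbed, and for the top-order piece I use $(f'(u)\theta,\Dt\theta)=\frac12\frac d{dt}(f'(u)\theta,\theta)-\frac12(f''(u)\Dt u\,\theta,\theta)$; incorporating $\frac12(f'(u)\theta,\theta)$ into the energy functional (again bounded below up to $-\frac K2\|\theta\|_{L^2}^2$ by \eqref{2.technical}) reduces matters to controlling $(f''(u)\theta^2,\theta)=(f''(u)(\Dt u)^3,1)$. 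Here I would use the growth bound $|f''(u)|\le C(1+|u|^3)$ (a consequence of \eqref{1.f5}) and Hölder to estimate this by $C(1+\|u\|_{L^{12}}^3)\|\theta\|_{L^4}^3$; interpolating $\|\theta\|_{L^4}$ between $\|\nabla\theta\|_{L^2}$ and $\|\theta\|_{L^2}$ and using Young's inequality turns it into a small multiple of $\|\nabla\theta\|_{L^2}^2$ plus $C(1+\|u\|_{L^{12}}^{12})\|\xi_\theta\|_{\Cal E}^2$. The resulting differential inequality
$$
\frac d{dt}\mathcal Y(t)+\alpha\mathcal Y(t)\le C(1+\|u(t)\|_{L^{12}}^{12})\mathcal Y(t),\qquad \mathcal Y(t)\simeq\|\xi_\theta(t)\|_{\Cal E}^2,
$$
combined with Gronwall and the fact that $\|u\|_{L^4(0,T;L^{12})}<\infty$ by Theorem \ref{Th2.SSgwp} (so $\int_0^T\|u\|_{L^{12}}^{12}$ need not be finite — I will instead keep the term on the left as $\int(1+\|u\|_{L^{12}}^4)^3$, i.e. split off the $L^4_tL^{12}_x$ exponent carefully, using $\|\theta\|_{L^4}^3\le \epsilon\|\nabla\theta\|_{L^2}^{?}\cdots$), gives $\xi_\theta\in L^\infty(0,T;\Cal E)$ for every $T$, hence for all $t\ge0$. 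Finally, from the equation itself $\Dx u=\Dt^2 u+\gamma\Dt u+f(u)-g$; the right-hand side is in $L^2(\Omega)$ (using $\Dt^2 u=\Dt\theta\in L^2$, $\Dt u\in L^2$, and $f(u)\in L^2$ since $u\in H^1_0$ with the energy bound and, once we know $u\in L^\infty$, the quintic growth), so elliptic regularity with Dirichlet data yields $u(t)\in H^2\cap H^1_0$, i.e. $\xi_u(t)\in\Cal E_1$.

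The subtlety worth flagging is that the approximation by Galerkin (or smooth) solutions must be invoked to legitimise differentiating the equation in $t$ and multiplying \eqref{2.thetaeq} by $\Dt\theta$, exactly as in Propositions \ref{Prop2.eneq} and \ref{Cor2.uni}; one performs the estimate on $u_N$, obtains the bound uniform in $N$ thanks to the uniform control of $\|u_N\|_{L^4(0,T;L^{12})}$ coming from Corollary \ref{Cor2.SSGalerkin}, and passes to the limit. The main obstacle is the cubic term $(f''(u)(\Dt u)^3,1)$: it is genuinely critical, and the estimate only closes because the dangerous factor $\|u\|_{L^{12}}$ appears with the exponent that matches the Strichartz integrability $L^4_t L^{12}_x$ — so the bookkeeping on the exponents in the interpolation/Young step is the delicate point, and it is precisely here that the global Strichartz bound \eqref{2.badSS}, rather than just the energy estimate, is indispensable.
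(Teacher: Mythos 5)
Your overall strategy (differentiate in time, derive an energy estimate for $\theta=\Dt u$, then use elliptic regularity, all justified through Galerkin approximations) is the same as the paper's, but your treatment of the nonlinear term opens a genuine gap. Rewriting $(f'(u)\theta,\Dt\theta)$ as $\tfrac12\frac d{dt}(f'(u)\theta,\theta)-\tfrac12(f''(u)(\Dt u)^3,1)$ trades a manageable term for an uncontrollable one: with $|f''(u)|\le C(1+|u|^3)$, H\"older gives $C(1+\|u\|_{L^{12}}^3)\|\Dt u\|_{L^4}^3$, and interpolation $\|\Dt u\|_{L^4}\le\|\Dt u\|_{L^2}^{1/4}\|\Dt u\|_{L^6}^{3/4}$ produces $\|\nabla\Dt u\|_{L^2}^{9/4}$ --- an exponent strictly larger than $2$ in the unknown higher norm, which no Young-type splitting can lower, since the only a priori information on $\Dt u$ is the $L^\infty_tL^2_x$ energy bound. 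The resulting differential inequality is superlinear (of the type $\mathcal Y'\le a(t)\mathcal Y^{9/8}$ with $a\in L^{4/3}(0,T)$) and yields only a local-in-time bound that may blow up; equivalently, the coefficient $\|u\|_{L^{12}}^{12}$ you aim for would require $u\in L^{12}(0,T;L^{12})$, which \eqref{2.badSS} does not give. The exponent bookkeeping you flag as ``delicate'' is exactly where the argument fails. The repair is not to integrate by parts in time at all: estimate the term directly,
$|(f'(u)\theta,\Dt\theta)|\le C(1+\|u\|_{L^{12}}^4)\|\theta\|_{L^6}\|\Dt\theta\|_{L^2}\le C(1+\|u\|_{L^{12}}^4)\|\xi_\theta\|^2_{\Cal E}$,
which is quadratic in the unknown with a coefficient in $L^1(0,T)$ by the Shatah--Struwe regularity, so Gronwall closes; this is precisely estimate \eqref{2.03} in the paper, and it needs no sign condition on $f'$.

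Your final elliptic step is also circular as written: the energy bound only gives $u(t)\in L^6$, hence $f(u(t))\in L^{6/5}$, and ``$f(u)\in L^2$ once we know $u\in L^\infty$'' presupposes the $H^2$ regularity being proved (the quintic elliptic problem is critical and does not bootstrap from $L^6$). This is exactly where the hypothesis \eqref{2.technical} must be spent: write $\Dx u-f(u)=g-\Dt\theta-\gamma\theta\in L^2(\Omega)$, multiply by $\Dx u$, and use $-(f(u),\Dx u)=(f'(u)\nabla u,\nabla u)\ge-K\|\nabla u\|^2_{L^2}$ to absorb the nonlinearity, which gives $\|u(t)\|_{H^2}^2\le C\bigl(\|g\|^2_{L^2}+\|\xi_\theta(t)\|^2_{\Cal E}+K\|\xi_u(t)\|^2_{\Cal E}\bigr)$, as in \eqref{2.maximal}. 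In your version the sign condition is spent on an unnecessary modification of the energy functional and is missing where it is actually needed.
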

\begin{proof} We give below only the formal proof which can be justified using Galerkin approximations and Corollary \ref{Cor2.SSGalerkin}.
Indeed, let $v(t):=\Dt u(t)$. Then, as not difficult to check using equation \eqref{0.eqmain} and the growth restriction \eqref{1.f5},
 $$
 \xi_v(0)=(\Dt u(0),\Dt^2 u(0)=(u_0',\Dx u_0-f(u_0)-\gamma u_0'+g)\in\Cal E
 $$
 and the function $v$ solves
 %$$
\begin{equation}\label{2.diffeq}
\Dt^2 v+\gamma\Dt v-\Dx v=-f'(u)v,\ \ \xi_v(0)\in\Cal E.
\end{equation}
%$$
Multiplying equation \eqref{2.diffeq} by $\Dt v$, we get
%$$
\begin{equation}\label{2.02}
\frac{1}{2}\frac{d}{dt}\|\xi_v(t)\|^2_{\Cal E}+\gamma\|\Dt v\|^2_{L^2}=-(f'(u)v,\Dt v).
\end{equation}
%$$
Due to the growth restriction \eqref{1.f5}, the term on the right hand side of \eqref{2.02} obeys the estimate, see \eqref{2.SSdif},
%$$
\begin{equation}\label{2.03}
|(f'(u)v,\Dt v)|\le C((1+|u|^4)|v|,|\Dt v|)\le
C(1+\|u\|^4_{L^{12}})\|\xi_v\|^2_{\Cal E}.
\end{equation}
%$$
Substituting the above estimate to \eqref{2.02} and using Gronwall inequality one gets
%$$
\begin{equation}
\|\xi_v(t)\|^2_\Cal{E}\leq \|\xi_v(0)\|^2_\Cal{E} \operatorname{exp}\(CT+\int_0^T\|u(s)\|^4_{L^{12}}ds\),\quad 0\leq t\leq T.
\end{equation}
%$$
The fact that $\xi_v(t)\in\Cal E$, in turn, implies that $\xi_u(t)\in\Cal E_1$. Indeed, the fact that $v=\Dt u\in H^1$ is immediate and we only need to check that $u\in H^2$. To this end, we rewrite equation \eqref{0.eqmain} in the form
%$$
\begin{equation}\label{2.ell}
\Dx u(t)-f(u(t))=g-\Dt v(t)-\gamma v(t):=g_v(t)\in L^2(\Omega)
\end{equation}
%$$
and, multiplying this elliptic equation by $\Dx u$ in $L^2(\Omega)$ and using the additional assumption \eqref{2.technical}, we end up with
%$$
\begin{equation}\label{2.maximal}
\|u(t)\|_{H^2}^2\le C\|\Dx u(t)\|_{L^2}^2\le C\|g\|^2_{L^2}+\|\xi_v(t)\|^2_{\Cal E}+K\|\xi_u(t)\|^2_{\Cal E}.
\end{equation}
%$$
Thus, Proposition \ref{Prop2.smooth} is proved.
\end{proof}

\begin{remark}\label{Rem2.technical} The extra assumption \eqref{2.technical} on the non-linearity $f$ is not essential and is introduced only in order to avoid the technicalities related with the maximal regularity estimate for the critical elliptic equation \eqref{2.ell}. Indeed, under this extra assumption, it is immediate as we have seen. In the general case when $f$ satisfies only the growth restriction \eqref{1.f5} it is also true, but its proof is much more delicate and requires, to use  e.g., the localization in space technique which we did not want to discuss here.
\par
Note also that after obtaining the $H^2$-estimate for $u(t)$, the growth rate of $f$ becomes not essential due to the embedding $H^2\subset C$, so the further regularity of the solution $u(t)$ (if the initial data is more smooth) can be obtained by usual bootstrapping arguments.
\end{remark}

\section{Asymptotic compactness and attractors: the subcritical case}\label{s3}
The aim of this section is to consider the subcritical case where the nonlinearity $f$ satisfies
%$$
\begin{equation}\label{3.feasy}
|f'(u)|\le C(1+|u|^{4-\kappa})
\end{equation}
%$$
for some $0<\kappa\leq 4$. In that case, the local existence result of Proposition \ref{Prop2.exist} can be improved as follows.
\begin{proposition}\label{Prop3.subcrit} Let the assumptions of Proposition \ref{Prop2.exist} hold and let, in addition, assumption \eqref{3.feasy} be satisfied. Then, for every $\xi_0\in\Cal E$, there exists $T=T(\|\xi_0\|_{\Cal E})>0$ such that equation \eqref{0.eqmain} possesses a Shatah-Struwe solution $u(t)$ on the interval $t\in[0,T]$ and the following estimate holds:
%$$
\begin{equation}\label{3.main}
\|u\|_{L^4(0,T;L^{12}(\Omega))}\le Q(\|\xi_0\|_{\Cal E})
\end{equation}
%$$
for some monotone function $Q$ which is independent of $u$.
\end{proposition}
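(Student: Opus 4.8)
The plan is to revisit the local existence argument of Proposition~\ref{Prop2.exist}, but now replacing the compactness argument of Corollary~\ref{Cor1.comp} (which gives a lifespan depending on the whole initial datum, not just its norm) by a direct fixed-point / continuity estimate that exploits the sub-quintic gap $\kappa>0$. As before I would split the Galerkin solution as $u_N=v_N+w_N$, where $v_N$ solves the linear problem with data $P_N\xi_0$ and $P_Ng$, and $w_N$ is the nonlinear remainder with zero data satisfying \eqref{2.wN}. The key new point is that, by the dissipative Strichartz estimate \eqref{1.disstr} and energy estimate \eqref{1.linen}, the linear part obeys
\begin{equation}
\|\xi_{v_N}(t)\|_{\Cal E}+\|v_N\|_{L^4(0,t;L^{12}(\Omega))}\le C\|\xi_0\|_{\Cal E}+C\|g\|_{L^2}=:R_0
\end{equation}
for \emph{all} $t\ge0$, with $C$ independent of $N$ and $t$. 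So the linear Strichartz norm is already controlled by $\|\xi_0\|_{\Cal E}$ on a time interval of any length; no smallness of $T$ is needed for $v_N$.

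**The remainder estimate.**
Next I would estimate the nonlinear remainder. Under \eqref{3.feasy} one has $|f(u)|\le C(1+|u|^{5-\kappa})$, so interpolating as in \eqref{1.int} (now with an exponent strictly better than $\theta=4/5$) gives, for $t\le T\le 1$,
\begin{equation}
\|P_Nf(v_N+w_N)\|_{L^1(0,t;L^2(\Omega))}\le C\big(1+\|v_N\|_{L^{5-\kappa}(0,t;L^{10-2\kappa}(\Omega))}^{5-\kappa}+\|w_N\|_{L^{5-\kappa}(0,t;L^{10-2\kappa}(\Omega))}^{5-\kappa}\big),
\end{equation}
and by Hölder in time the $L^{5-\kappa}_t$-norms are bounded by $T^{\delta}$ times the $L^5_t$-norms for some $\delta=\delta(\kappa)>0$. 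Feeding this and \eqref{1.disstr} back into the equation for $w_N$ and writing $Y_N(t):=\|\xi_{w_N}(t)\|_{\Cal E}+\|w_N\|_{L^5(0,t;L^{10}(\Omega))}$, one arrives at an inequality of the form
\begin{equation}
Y_N(t)\le C T^{\delta}\big(1+R_0^{5-\kappa}+Y_N(t)^{5-\kappa}\big),\qquad Y_N(0)=0,
\end{equation}
with $C$ depending only on $\gamma$ and the constant in \eqref{3.feasy}, and $Y_N$ continuous in $t$. Choosing $T=T(R_0)=T(\|\xi_0\|_{\Cal E})$ small enough (depending only on $R_0$, hence only on $\|\xi_0\|_{\Cal E}$ and $\|g\|_{L^2}$) makes the map $y\mapsto CT^\delta(1+R_0^{5-\kappa}+y^{5-\kappa})$ a contraction-type barrier on an interval $[0,2CT^\delta(1+R_0^{5-\kappa})]$, so by continuity $Y_N(t)\le 2CT^\delta(1+R_0^{5-\kappa})$ on $[0,T]$, uniformly in $N$. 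Combining with the bound $R_0$ on $v_N$ yields the uniform estimate $\|u_N\|_{L^4(0,T;L^{12}(\Omega))}\le Q(\|\xi_0\|_{\Cal E})$.

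**Passage to the limit.**
Finally I would pass to the weak-star limit $N\to\infty$ in $L^\infty(0,T;\Cal E)$, exactly as at the end of the proof of Proposition~\ref{Prop2.exist}: the uniform bounds on $\|\xi_{u_N}\|_{L^\infty(0,T;\Cal E)}$ and on $\|u_N\|_{L^4(0,T;L^{12})}$ let us extract a limit $u$ which is a weak energy solution, and the $L^4L^{12}$ bound is preserved in the limit by lower semicontinuity, so $u$ is a Shatah--Struwe solution on $[0,T]$ satisfying \eqref{3.main}. The one subtlety worth flagging — the main obstacle, such as it is — is the bookkeeping of the interpolation exponents: one must check that for every $\kappa\in(0,4]$ there genuinely is a positive power $T^\delta$ in front of the nonlinear terms (equivalently, that $5-\kappa<\frac{4}{\theta}$ for the admissible $\theta$), which is precisely where the sub-quintic assumption is used and where the quintic borderline case fails. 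Everything else is a routine repetition of the arguments already carried out for Proposition~\ref{Prop2.exist}.
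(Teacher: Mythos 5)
Your proposal is correct and follows essentially the same route as the paper: the same Galerkin splitting $u_N=v_N+w_N$, the same use of the sub-quintic gap via H\"older in time to produce a factor $T^{\kappa/5}$ (your $T^\delta$) so that only the size $\|\xi_0\|_{\Cal E}$ of the linear part matters, the same continuity/barrier argument for $Y_N$, and the same weak-star passage to the limit. The only deviations are cosmetic (you place the $T^\delta$ factor also in front of the $w_N$ term and work in $L^{10-2\kappa}$ rather than using $L^{10}\hookrightarrow L^{2(5-\kappa)}$ on the bounded domain), which if anything slightly simplifies the closing of the barrier estimate.
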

\begin{proof} The proof of this statement is similar to the one of Proposition \ref{Prop2.exist}, but we need to check that now the lifespan $T$ depends only on the energy norm of $\xi_0$. To this end, we note that, due to \eqref{3.feasy}, estimate \eqref{3.pnf} can be improved as follows
%$$
\begin{multline}\label{3.nice}
\|P_Nf(v_N+w_N)\|_{L^1(0,t;L^2(\Omega))}\le C\(t+\|v_N\|_{L^{5-\kappa}(0,t;L^{10}(\Omega))}^{5-\kappa}+\|w_N\|_{L^{5-\kappa}(0,t;L^{10}(\Omega))}^{5-\kappa}\)\le\\\le C\(t+t^{\kappa/5}\|v_N\|_{L^{5}(0,t;L^{10}(\Omega))}^{5-\kappa}\)+C\|w_N\|_{L^{5}(0,t;L^{10}(\Omega))}^{5-\kappa}.
\end{multline}
%$$
We see that the first term at the left-hand side of \eqref{3.nice} can be made small  by decreasing $t$ and we need not to make the $L^5(L^{10})$-norm of $v_N$ small. Thus, we may use estimate \eqref{1.distr} and interpolation \eqref{1.int}, to see that
$$
\|v_N\|_{L^5(0,1;L^{10}(\Omega))}\le C\|\xi_0\|_{\Cal E}
$$
and, for every $\eb>0$, we may find $T=T(\eb,\|\xi_0\|_{\Cal E})$ such that
$$
C\(t+t^{\kappa/5}\|v_N\|_{L^{5}(0,t;L^{10}(\Omega))}^{5-\kappa}\)\le \eb,\ \ t\le T(\eb,\|\xi_0\|_{\Cal E}).
$$
Arguing then exactly as in the end of the proof of Proposition \ref{Prop2.exist}, we establish the existence of the desired Shatah-Struwe solution $u$ as well as estimate \eqref{3.main}. Proposition \ref{Prop3.subcrit} is proved.
\end{proof}
As has been already noted in Remark \ref{Rem2.difficult}, the control of the lifespan of the {\it local} Shatah-Struwe
solution in terms of the energy norm together with the control of energy norm due to the energy estimate allows us to extend the local solution for all time and prove the existence of a {\it global} Shatah-Struwe solution $u(t)$ of problem \eqref{0.eqmain}. Namely, the following statement holds.
\begin{corollary}\label{Cor3.main} Let the assumptions of Proposition \ref{Prop3.subcrit} hold and let, in addition, the nonlinearity $f$ satisfy the dissipativity assumption \eqref{2.fdis}. Then, for every $\xi_0\in\Cal E$, there exists a unique global Shatah-Struwe solution $u(t)$ of problem \eqref{0.eqmain} and the following dissipative estimate holds:
%$$
\begin{equation}\label{3.gooddis}
\|\xi_u(t)\|_{\Cal E}+\|u\|_{L^4(t,t+1;L^{12}(\Omega))}\le Q(\|\xi_u(0)\|_{\Cal E})e^{-\alpha t}+Q(\|g\|_{L^2}),
\end{equation}
%$$
where the positive constant $\alpha$ and the monotone function $Q$ are independent of $u$ and $t$.
\end{corollary}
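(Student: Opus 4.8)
The plan is to combine the local existence result of Proposition \ref{Prop3.subcrit} with the dissipative energy estimate, using a standard continuation argument. First I would check global existence: given $\xi_0\in\Cal E$, Proposition \ref{Prop3.subcrit} produces a Shatah-Struwe solution on $[0,T]$ with $T=T(\|\xi_0\|_{\Cal E})>0$, and since \eqref{3.feasy} implies the growth restriction \eqref{1.f5}, Corollary \ref{Cor2.uni} gives uniqueness and Corollary \ref{Cor2.dis} (whose hypotheses hold, as \eqref{2.fdis} is assumed here and \eqref{1.f5} follows from \eqref{3.feasy}) gives the dissipative bound \eqref{2.disest} on $[0,T]$. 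The key point is that the lifespan bound depends \emph{only} on the energy norm: since \eqref{2.disest} shows $\|\xi_u(t)\|_{\Cal E}\le Q(\|\xi_0\|_{\Cal E})e^{-\alpha t}+Q(\|g\|_{L^2})=:R$ is controlled uniformly on any existence interval, one can iterate the local existence theorem with steps of fixed length $T(R')$ where $R'$ majorizes the energy norm on all of $\R_+$. A routine maximality argument (let $T^*$ be the supremum of existence times; if $T^*<\infty$, the energy norm is bounded near $T^*$, so one can restart from a time slightly before $T^*$ and extend past it, a contradiction) then yields a global solution; uniqueness on each finite interval patches to global uniqueness.

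Next I would establish the dissipative estimate \eqref{3.gooddis}. The energy part $\|\xi_u(t)\|_{\Cal E}\le Q(\|\xi_u(0)\|_{\Cal E})e^{-\alpha t}+Q(\|g\|_{L^2})$ is exactly \eqref{2.disest}, now valid for all $t\ge0$ by the global existence just obtained. For the Strichartz part, the idea is to localize: on any window $[t,t+1]$ the solution is, after a time shift, a Shatah-Struwe solution with initial data $\xi_u(t)$ whose energy norm is bounded by $R(t):=Q(\|\xi_u(0)\|_{\Cal E})e^{-\alpha t}+Q(\|g\|_{L^2})$. Applying the local estimate \eqref{3.main} from Proposition \ref{Prop3.subcrit} on this window (here one uses that $T(\|\xi_u(t)\|_{\Cal E})$ can be taken $\ge 1$ once the energy norm is below a fixed threshold, and otherwise one subdivides $[t,t+1]$ into a bounded number of subintervals depending only on $R(t)$) gives $\|u\|_{L^4(t,t+1;L^{12})}\le Q_1(R(t))$. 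Finally I would absorb this into the claimed form: since $R(t)\le Q(\|\xi_u(0)\|_{\Cal E})+Q(\|g\|_{L^2})$ for all $t$, one gets a bound $Q_2(\|\xi_u(0)\|_{\Cal E},\|g\|_{L^2})$, but to recover the \emph{decaying} structure $Q(\|\xi_u(0)\|_{\Cal E})e^{-\alpha t}+Q(\|g\|_{L^2})$ one splits into the regime $t\le t_0$, where the crude bound $Q_2$ times $e^{\alpha t_0}e^{-\alpha t}$ absorbs it, and $t\ge t_0$ with $t_0$ chosen so that $R(t)\le 2Q(\|g\|_{L^2})$, on which $\|u\|_{L^4(t,t+1;L^{12})}\le Q_1(2Q(\|g\|_{L^2}))$ is an absolute constant; combining the two regimes gives the asserted form (after enlarging $Q$).

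The main obstacle I anticipate is purely bookkeeping rather than conceptual: making the lifespan-versus-energy dependence in Proposition \ref{Prop3.subcrit} interact cleanly with the exponential decay. Specifically, the local existence time $T(\|\xi_0\|_{\Cal E})$ is only known to be positive and (implicitly) monotone decreasing in the energy norm, so to cover a unit window $[t,t+1]$ uniformly one must either know $T(\cdot)$ is bounded below on bounded energy sets (which it is, by inspecting the proof of Proposition \ref{Prop3.subcrit}, where $T(\eb,\|\xi_0\|_{\Cal E})$ depends continuously on $\|\xi_0\|_{\Cal E}$) or partition the window into $\lceil 1/T(R(t))\rceil$ pieces, picking up a constant that still only depends on $R(t)$. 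Either route works; one just has to state it carefully. Everything else — the continuation argument, uniqueness patching, and the two-regime split for the decay — is standard and follows the template in \cite{BV,CV,ZelDCDS}.
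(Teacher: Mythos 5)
Your proposal is correct and follows essentially the same route as the paper: uniqueness from Corollary \ref{Cor2.uni}, the dissipative energy bound from Corollary \ref{Cor2.dis}, global extension because the lifespan in Proposition \ref{Prop3.subcrit} depends only on the energy norm, and the Strichartz part of \eqref{3.gooddis} obtained by applying \eqref{3.main} on shifted time windows together with \eqref{2.disest}. The extra bookkeeping you supply (subdividing the unit window when $T(R(t))<1$ and the two-regime split to recover the decaying form) is exactly what the paper leaves implicit, so there is nothing substantive to add.
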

\begin{proof}
Indeed, the uniqueness is proved in Corollary \ref{Cor2.uni} and the dissipative energy estimate is obtained in Corollary \ref{Cor2.dis}. According to this estimate, the energy norm $\|\xi_u(t)\|_{\Cal E}$ cannot blow up in a finite time and, therefore, due to Proposition \ref{Prop3.subcrit}, the local Shatah-Struwe solution $u(t)$ can be extended globally in time. Finally, the dissipative estimate \eqref{3.gooddis} for the Strichartz norm of $u$ follows from \eqref{3.main} and the dissipative estimate \eqref{2.disest} for the energy norm.
\end{proof}
We are now ready to verify the asymptotic compactness of the solution semigroup $S(t)$ of equation \eqref{0.eqmain}
 in the subcritical case. To this end, we split the solution $u$ as follows: $u(t)=v(t)+w(t)$, where $v(t)$ solves the linear problem
 %$$
 \begin{equation}\label{3.linear}
 \Dt^2 v+\gamma\Dt v-\Dx v=0,\ \ \xi_v\big|_{t=0}=\xi_u\big|_{t=0}
 \end{equation}
%$$
and the remainder $w(t)$ satisfies
%$$
 \begin{equation}\label{3.nonlinear}
 \Dt^2 w+\gamma\Dt w-\Dx w=g-f(u),\ \ \xi_w\big|_{t=0}=0.
 \end{equation}
%$$
Then, due to estimate \eqref{1.disstr},
%$$
\begin{equation}\label{3.lindis}
\|\xi_v(t)\|_{\Cal E}+\|v\|_{L^4(t,t+1;L^{12}(\Omega))}\le Q(\|\xi_u(0)\|_{\Cal E})e^{-\alpha t}
\end{equation}
%$$
and, therefore, the $v$-component is exponentially decaying in the energy and Strichartz norms. As the next corollary shows, the $w$-component
is more regular.
\begin{corollary}\label{Cor3.comp} Let the assumptions of Corollary \ref{Cor3.main} hold. Then, there exists $\delta=\delta(\kappa)>0$  and $\delta<1/2$ such that
$$
\xi_w(t)\in \Cal E_\delta:=H_0^{1+\delta}(\Omega)\times H^\delta(\Omega)
$$
and the following estimate holds:
%$$
\begin{equation}\label{3.nonlindis}
\|\xi_w(t)\|_{\Cal E_\delta}+\|w\|_{L^4(t,t+1;W^{\delta,12}(\Omega))}\le Q(\|\xi_u(0)\|_{\Cal E})e^{-\alpha t}+Q(\|g\|_{L^2}),
\end{equation}
%$$
where the monotone function $Q$ and the positive constant $\alpha$ are independent of $u$ and $t$.
\end{corollary}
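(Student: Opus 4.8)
The plan is to lift the dissipative linear Strichartz estimate \eqref{1.disstr} to the fractional energy scale $\Cal E_\delta$ and to exploit the strict subcriticality \eqref{3.feasy} in order to gain a small amount $\delta=\delta(\kappa)>0$ of regularity for the nonlinear term $f(u)$, which then propagates to $w$ through \eqref{3.nonlinear}. First I would record the $\Cal E_\delta$-analogue of \eqref{1.disstr}: since the Dirichlet Laplacian, and hence the operator $\Dt^2+\gamma\Dt-\Dx$ in \eqref{1.linear}, commutes with $(-\Dx)^{\delta/2}$, and since for $0\le\delta<1/2$ and on a smooth bounded domain $(-\Dx)^{\delta/2}$ realizes the norm equivalences $\|(-\Dx)^{\delta/2}v\|_{L^2}\sim\|v\|_{H^\delta}$ and $\|(-\Dx)^{\delta/2}v\|_{L^{12}}\sim\|v\|_{W^{\delta,12}}$ (on the subspace of functions carrying the Dirichlet condition, where it is meaningful), applying \eqref{1.disstr} to $(-\Dx)^{\delta/2}v$ — which solves \eqref{1.linear} with source $(-\Dx)^{\delta/2}G$ — gives, for every solution $v$ of \eqref{1.linear} and every such $\delta$,
\[
\|\xi_v(t)\|_{\Cal E_\delta}+\|v\|_{L^4(\max\{0,t-1\},t;W^{\delta,12}(\Omega))}\le C\(\|\xi_0\|_{\Cal E_\delta}e^{-\beta t}+\int_0^te^{-\beta(t-s)}\|G(s)\|_{H^\delta}\,ds\).
\]

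The main work is to show that $f(u)$ belongs to $L^1_{loc}(\R_+;H^\delta)$ for a suitable $\delta>0$, with the right dissipative control of its norm. From \eqref{3.feasy} one has the pointwise bound $|f(a)-f(b)|\le C(1+|a|^{4-\kappa}+|b|^{4-\kappa})|a-b|$, so the standard fractional chain rule gives, for $0<\delta<1$,
\[
\|f(u)\|_{H^\delta}\le C\|f'(u)\|_{L^{12/(4-\kappa)}}\|u\|_{W^{\delta,p}}\le C\(1+\|u\|_{L^{12}}^{4-\kappa}\)\|u\|_{W^{\delta,p}},\qquad \tfrac1p:=\tfrac12-\tfrac{4-\kappa}{12}=\tfrac{2+\kappa}{12};
\]
note that $L^{12/(4-\kappa)}$ is exactly the Lebesgue space into which $f'(u)\sim|u|^{4-\kappa}$ is placed by the Strichartz norm of $u$. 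Since $p=12/(2+\kappa)<6$, the fractional Sobolev embedding $H^1_0(\Omega)\subset W^{\delta,p}(\Omega)$ holds for every $\delta<\kappa/4$, so I would fix any $\delta=\delta(\kappa)\in(0,\min\{\kappa/4,1/2\})$. Then $\|f(u(s))\|_{H^\delta}\le C(1+\|u(s)\|_{L^{12}}^{4-\kappa})\|u(s)\|_{H^1}$, and integrating over a unit time interval, using Hölder in time with exponents $\tfrac4{4-\kappa},\tfrac4\kappa$ and the dissipative estimate \eqref{3.gooddis}, we obtain
\[
\int_k^{k+1}\|f(u(s))\|_{H^\delta}\,ds\le Q(\|\xi_u(0)\|_{\Cal E})e^{-\alpha k}+Q(\|g\|_{L^2}).
\]

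It remains to handle the source term $g$, which lies only in $L^2=H^0$ and so does not by itself belong to $H^\delta$. I would remove the stationary part: let $w_\infty\in H^2(\Omega)\cap H^1_0(\Omega)$ solve $-\Dx w_\infty=g$, so $\|w_\infty\|_{H^2}\le C\|g\|_{L^2}$, and set $\hat w:=w-w_\infty$; then $\hat w$ solves $\Dt^2\hat w+\gamma\Dt\hat w-\Dx\hat w=-f(u)$ with $\xi_{\hat w}(0)=(-w_\infty,0)\in\Cal E_1\subset\Cal E_\delta$. Applying the fractional linear estimate above to $\hat w$ at time $t+1$, bounding $\int_0^{t+1}e^{-\beta(t+1-s)}\|f(u(s))\|_{H^\delta}\,ds$ by the convergent sum $\sum_{k\ge0}e^{-\beta(t-k)}\big(Q(\|\xi_u(0)\|_{\Cal E})e^{-\alpha k}+Q(\|g\|_{L^2})\big)$ via the previous step, and finally adding back $w_\infty$ — which is time independent, lies in $\Cal E_\delta$, and satisfies $\|w_\infty\|_{W^{\delta,12}}\le C\|w_\infty\|_{H^2}\le C\|g\|_{L^2}$ since $H^2(\Omega)\subset W^{\delta,12}(\Omega)$ for $\delta<3/4$ — yields \eqref{3.nonlindis} (possibly with a smaller $\alpha$).

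The bulk of the argument — the fractional-calculus identifications, the Gronwall step already incorporated in \eqref{3.gooddis}, the geometric summation — is routine. The one point requiring genuine care, and the only place where the strict inequality $\kappa>0$ is used, is the exponent bookkeeping in the second paragraph: one must check that the power $p=12/(2+\kappa)$ forced by placing $f'(u)\sim|u|^{4-\kappa}$ into the Strichartz-controlled space $L^{12/(4-\kappa)}$ is still $<6$, so that the embedding $H^1_0\subset W^{\delta,p}$ leaves a strictly positive Sobolev exponent $\delta=\delta(\kappa)$; in the critical case this room disappears, which is exactly why the critical case requires the entirely different treatment of Sections \ref{s4}--\ref{s5}.
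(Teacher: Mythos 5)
Your argument is correct and follows the same overall architecture as the paper (subtract the stationary solution $(-\Dx)^{-1}g\in H^2$, lift the dissipative Strichartz estimate \eqref{1.disstr} to the $\Cal E_\delta$-scale, and reduce everything to an $L^1(t,t+1;H^\delta)$ bound on $f(u)$ with dissipative constants), but the key nonlinear estimate is obtained by a genuinely different mechanism. The paper never invokes a fractional chain rule: it bounds $f(u)$ in the two classical spaces $L^1(t,t+1;W^{1,6/5})$ (via $\Nx f(u)=f'(u)\Nx u$, H\"older, and the $L^4(L^{12})\times L^\infty(H^1)$ control coming from \eqref{3.gooddis}) and $L^1(t,t+1;L^{10/(5-\kappa)})$ (via the subcritical growth and the $L^5(L^{10})$ norm), and then interpolates between them to land in $H^\delta$ with $\delta=\frac{3\kappa}{10+3\kappa}$; you instead place $f'(u)$ in $L^{12/(4-\kappa)}$ and $u$ in $W^{\delta,p}$ with $p=12/(2+\kappa)$ via the Christ--Weinstein fractional chain rule and the embedding $H^1_0\subset W^{\delta,p}$ for $\delta<\kappa/4$. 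Your exponent bookkeeping is right, and the value of $\delta(\kappa)$ being different from the paper's is immaterial. The trade-off: the paper's interpolation route uses only the ordinary chain rule and Lebesgue-space H\"older, so nothing need be said about fractional calculus of compositions on a bounded domain, whereas your route is more direct but has two points that deserve a word of justification: (i) the fractional chain rule is standard on $\R^3$, so on $\Omega$ you should pass through an extension operator bounded simultaneously on $H^1$, $L^{12}$ and $W^{\delta,p}$, apply the chain rule to the extension and restrict back; (ii) for the lifted Strichartz bound you only need (and in general only have) the one-sided estimate $\|v\|_{W^{\delta,12}}\le C(\|(-\Dx)^{\delta/2}v\|_{L^{12}}+\|v\|_{L^{12}})$ rather than a norm equivalence on $L^{12}$, while on the $L^2$ scale the equivalences you use are exactly what the restriction $\delta<1/2$ guarantees; note the paper performs the same lifting of \eqref{1.disstr} to $\Cal E_\delta$ silently, so neither of these is a gap relative to the paper's own level of detail.
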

\begin{proof}Indeed, since the function $G:=(-\Dx)^{-1}g\in H^2$, it only remains to verify estimate \eqref{3.nonlindis} for the function $\bar w(t):=w(t)-G$ which solves
$$
\Dt^2\bar w+\gamma\Dt\bar w-\Dx \bar w=-f(u),\ \ \xi_{\bar w}(0)=\xi_u(0)-(G,0).
$$
Moreover, due to estimate \eqref{1.disstr}, we only need to check that
%$$
\begin{equation}\label{3.est}
\|f(u)\|_{L^1(t,t+1;H^\delta(\Omega))}\le Q(\|\xi_u(0)\|_{\Cal E})e^{-\alpha t}+Q(\|g\|_{L^2}).
\end{equation}
%$$
According to the H\"older inequality and estimate \eqref{3.gooddis},
%$$
\begin{multline*}
\|f(u)\|_{L^1(t,t+1;W^{1,6/5}(\Omega))}\le C(1+\|u^4\Nx u\|_{L^1(t,t+1;L^{6/5}(\Omega))})\le\\\le C(1+\|u\|^4_{L^4(t,t+1;L^{12}(\Omega))}\|\Nx u\|_{L^\infty(t,t+1;L^2(\Omega))})\le Q(\|\xi_u(0)\|_{\Cal E})e^{-\alpha t}+Q(\|g\|_{L^2}).
\end{multline*}
%$$
On the other hand, due to the growth restriction \eqref{3.feasy},
$$
\|f(u)\|_{L^1(t,t+1;L^{10/(5-\kappa)}(\Omega))}\le C(1+\|u\|_{L^5(t,t+1;L^{10}(\Omega))})\le Q(\|\xi_u(0)\|_{\Cal E})e^{-\alpha t}+Q(\|g\|_{L^2}).
$$
The interpolation inequality
$$
\|U\|_{H^\delta}\le C\|U\|_{W^{1,6/5}}^{1-\theta}\|U\|_{L^{10/(5-\kappa)}}^\theta,\ \ \theta=\frac{10}{10+3\kappa},\ \ \delta=\frac{3\kappa}{10+3\kappa}
$$
now gives the desired estimate \eqref{3.est} and finishes the proof of the corollary.
\end{proof}
We conclude our study of the subcritical case by establishing the existence of a global attractor for the associated solution semigroup. For the convenience of the reader, we recall the definition of a global attractor, see \cite{tem,BV,CV} for more details.

\begin{definition}\label{Def3.attr} Let $S(t)$ be a semigroup acting on a Banach space $\Cal E$. Then, a set $\Cal A\subset \Cal E$ is a global attractor of $S(t)$ if
\par
1. The set $\Cal A$ is compact in $\Cal E$;
\par
2. The set $\Cal A$ is strictly invariant: $S(t)\Cal A=\Cal A$;
\par
3. It is an attracting set for the semigroup $S(t)$, i.e., for any bounded set $B\subset\Cal E$ and every neighborhood $\Cal O(\Cal A)$ of the set $\Cal A$, there exists a time $T=T(B,\Cal A)$ such that
$$
S(t)B\subset\Cal O(\Cal A),\ \ \forall t\ge T.
$$
\end{definition}
The next theorem can be considered as the main result of this section.
\begin{theorem}\label{Th3.main} Let the assumptions of Corollary \ref{Cor3.main} hold. Then, the solution semigroup $S(t)$
 associated with problem \eqref{0.eqmain} possesses a global attractor $\Cal A$ in the energy phase space $\Cal E$. Moreover,
 the attractor $\Cal A$ is bounded in more regular space:
 %$$
 \begin{equation}\label{3.attrsmooth}
 \Cal A\in\Cal E_\delta ,\ \ \|\Cal A\|_{\Cal E_\delta}\le C
 \end{equation}
 %$$
 for some $\delta>0$.
\begin{proof} Indeed, according to the abstract attractor existence theorem, we need to verify that $S(t)$ is continuous in $\Cal E$ for every fixed $t$ and that it possesses a compact attracting set in $\Cal E$, see \cite{BV}. The first assertion is satisfied due to Corollary \ref{Cor2.uni} and, according to Corollary \ref{Cor3.comp} and estimate \eqref{3.lindis}, the following set
$$
\Cal B:=\{\xi\in\Cal E_\delta,\ \|\xi\|_{\Cal E_\delta}\le R\}
$$
will be the compact attracting set for $S(t)$ in $\Cal E$ if the radius $R$ is large enough. Thus, all assumptions of the abstract attractor existence theorem are verified and the existence of the attractor $\Cal A$ is proved. It remains to recall that $\Cal A\subset\Cal B$, so \eqref{3.attrsmooth} is also verified and the theorem is proved.
\end{proof}
\begin{remark}\label{Rem3.good} We stated in Theorem \ref{Th3.main} only that the attractor $\Cal A$ is bounded in $\Cal E_\delta$ with $\delta=\frac{3\kappa}{3\kappa+10}$. However, using the standard bootstrapping arguments one can easily show that $\Cal A\in \Cal E_1$ and that its actual regularity is restricted only by the regularity of $\Omega$, $g$ and $f$ (if all the above data is $C^\infty$-smooth, the attractor also will be $C^\infty$-smooth). Moreover, since $H^2\subset C$, the growth rate of $f$ becomes not essential and one can establish the finite-dimensionality of $\Cal A$ exactly as in the well-studied case when $f(u)$ grows slower than $u^3$, see e.g., \cite{BV}. Mention also that the generalization to the non-autonomous case, when, say, the external force $g=g(t)$ depends explicitly on time is also straightforward.
\end{remark}

\end{theorem}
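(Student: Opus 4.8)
The plan is to deduce the theorem from the standard abstract criterion: a semigroup $S(t)$ on a Banach space which is continuous on $\Cal E$ for each fixed $t\ge0$ and possesses a compact attracting set has a global attractor, and that attractor is contained in every such attracting set (see \cite{BV,tem}). So only two ingredients are needed — the continuity of $S(t)$ and a compact attracting set — and the regularity assertion \eqref{3.attrsmooth} will then come for free. The continuity (in fact local Lipschitz continuity in $\Cal E$) of $S(t)$ is already furnished by Corollary \ref{Cor2.uni}, so the only remaining task is to produce the compact attracting set.

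For that I would use the decomposition $u=v+w$ of \eqref{3.linear}--\eqref{3.nonlinear}: the component $v$ solves the homogeneous linear damped wave equation with the same initial data and hence decays exponentially in $\Cal E$ by \eqref{3.lindis}, while the nonlinear remainder $w$ stays uniformly bounded in the stronger space $\Cal E_\delta=H^{1+\delta}_0(\Omega)\times H^\delta(\Omega)$, $\delta=\delta(\kappa)>0$, with the dissipative bound \eqref{3.nonlindis} from Corollary \ref{Cor3.comp}. Choosing $R$ large enough (of the order of $Q(\|g\|_{L^2})$), the ball $\Cal B:=\{\xi\in\Cal E_\delta:\ \|\xi\|_{\Cal E_\delta}\le R\}$ is then attracting in $\Cal E$: for a bounded $B\subset\Cal E$ and all $t$ past some $t_0=t_0(B)$, the $w$-part of $S(t)\xi_0$ lies in $\Cal B$ while the $v$-part tends to $0$ in $\Cal E$ uniformly over $\xi_0\in B$, so $\operatorname{dist}_{\Cal E}(S(t)B,\Cal B)\to0$. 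Since $\Cal E_\delta$ embeds compactly into $\Cal E$ by Rellich's theorem, $\Cal B$ is compact in $\Cal E$. The abstract theorem now yields the attractor $\Cal A$, and $\Cal A=\omega(\Cal B)\subset\Cal B$ gives \eqref{3.attrsmooth}.

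I do not expect a real obstacle in assembling these pieces: all the analytic work has been done beforehand, and the only genuinely nontrivial input is the parabolic-type smoothing estimate \eqref{3.nonlindis} for $w$, which is exactly where the subcriticality $|f'(u)|\le C(1+|u|^{4-\kappa})$ enters — it is this gap that lets $f(u)$ be estimated in $L^1(t,t+1;H^\delta(\Omega))$ rather than merely in $L^1(t,t+1;L^2(\Omega))$, and hence produces the extra $\delta$ derivatives on $w$. Granting Corollary \ref{Cor3.comp}, the remaining steps (checking the hypotheses of the abstract attractor theorem and the harmless choice of $R$) are routine. The sharper regularity $\Cal A\in\Cal E_1$ mentioned in Remark \ref{Rem3.good} would then follow by the usual bootstrap starting from $\Cal A$ bounded in $\Cal E_\delta$, but that is not needed for the statement at hand.
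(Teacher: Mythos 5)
Your proposal is correct and follows essentially the same route as the paper: continuity of $S(t)$ from Corollary \ref{Cor2.uni}, the ball $\Cal B$ in $\Cal E_\delta$ as a compact attracting set via the splitting \eqref{3.linear}--\eqref{3.nonlinear} together with \eqref{3.lindis} and Corollary \ref{Cor3.comp}, and then the abstract attractor existence theorem with $\Cal A\subset\Cal B$ yielding \eqref{3.attrsmooth}. No gaps to report.
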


\section{Weak trajectory attractors for critical and supercritical cases}\label{s4}
The aim of this section is to recall the trajectory attractor theory for equation \eqref{0.eqmain} for the case of fast growing nonlinearities developed in \cite{ZelDCDS}. This theory will be essentially used in the next section for proving the dissipativity of Shatah-Struwe solutions in the critical case. Namely, following \cite{ZelDCDS}, we assume that the nonlinearity $f$ satisfies the following conditions:
%$$
\begin{equation}\label{00.3}
\begin{cases}
1.\ \ f\in C^2(\R,\R),  \ \ f(0)=0,\\
2.\ \ |f''(v)|\le C(1+|v|^p),\\
3.\ \ f'(v)\ge -K+\delta|v|^{p+1},
\end{cases}
\end{equation}
%$$
where the exponent $p$ can be  arbitrarily large (of course, we are mainly interested in the case $p\ge3$ since the subcritical case $p<3$ is studied in the previous section). Note that, for the case $p>3$, the energy phase space should be modified:
$$
E:=[H^1_0(\Omega)\cap L^{p+3}(\Omega)]\times L^2(\Omega)
$$
in order to guarantee the finiteness of the energy (since $H^1$ is not embedded into $L^{p+3}$ if $p>3$). We also modify the energy norm for that case as follows:
$$
\|\xi_u(t)\|^2_E:=\|\Dt u(t)\|^2_{L^2}+\|\Nx (t)\|^2_{L^2}+\|u(t)\|^{p+3}_{L^{p+3}}.
$$
In this section, we will work with Galerkin solutions of equation \eqref{0.eqmain}, see Definition \ref{Def1.Gal}.
\begin{proposition}\label{Prop4.Galex} Let the nonlinearity $f$ satisfies assumptions \eqref{00.3} and let $g\in L^2(\Omega)$. Then, for every $\xi_0\in\Cal E$, there exists at least one Galerkin solution $u(t)$, $t\in\R_+$, of problem \eqref{0.eqmain}
\end{proposition}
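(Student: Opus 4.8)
The plan is to prove Proposition \ref{Prop4.Galex} by a Galerkin approximation combined with the energy estimate, exactly in the spirit of the classical existence theory for damped wave equations, but now taking care of the modified energy space $E$ needed for $p>3$. First I would write down the $N$-th Galerkin system \eqref{1.galerkin} with initial data $\xi_0^N:=P_N\xi_0$, which converges strongly to $\xi_0$ in $\Cal E$ (and, since $\xi_0\in\Cal E$ with $u_0\in H^1_0\subset L^6\subset L^{p+3}$ only when $p\le 3$, one has to be slightly careful: for $p>3$ one should take $\xi_0\in E$, or note that $P_N\xi_0\in C^\infty$ so the approximate energies are finite in any case). The Galerkin system is a system of $2N$ ODEs with $C^2$ right-hand side, hence locally uniquely solvable; the point is to show the solution does not blow up.

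The key step is the uniform a priori estimate. Multiplying \eqref{1.galerkin} by $\Dt u_N+\beta u_N$ for a small $\beta>0$ and integrating over $\Omega$ (which is legitimate since $u_N$ is smooth and finite-dimensional), one obtains, using $P_N^2=P_N$ and $(P_Nf(u_N),\Dt u_N+\beta u_N)=(f(u_N),\Dt u_N+\beta u_N)$ since $u_N\in P_NL^2$, the differential inequality
\begin{equation}\label{4.galen}
\frac d{dt}\mathcal{L}_N(t)+\alpha\mathcal{L}_N(t)\le C(1+\|g\|_{L^2}^2),
\end{equation}
where $\mathcal{L}_N(t):=\tfrac12\|\xi_{u_N}(t)\|_E^2+(F(u_N(t)),1)-(g,u_N(t))+\beta(\Dt u_N,u_N)$ is equivalent (using assumption \eqref{00.3}.3, which forces $F(u)\ge -C+\delta'|u|^{p+3}$, and choosing $\beta$ small compared to $\gamma$ and $\lambda_1$) to $\|\xi_{u_N}(t)\|_E^2$ up to additive constants. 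Gronwall's inequality applied to \eqref{4.galen} then yields a bound on $\|\xi_{u_N}\|_{L^\infty(0,T;E)}$ uniform in $N$ for every $T>0$; in particular the solution is global, so each $u_N$ is defined on all of $\R_+$. From the equation one also reads off $\Dt^2 u_N$ bounded in $L^\infty(0,T;H^{-1}+L^{(p+3)/(p+2)})$ uniformly in $N$, using $|f(v)|\le C(1+|v|^{p+2})$ which follows by integrating \eqref{00.3}.2 twice.

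The final step is the passage to the limit. Extracting a subsequence, $\xi_{u_N}\rightharpoonup \xi_u$ weak-star in $L^\infty(0,T;E)$ for each $T$; by a diagonal argument over $T\to\infty$ one gets a limit defined on $\R_+$. By Aubin--Lions (the uniform bounds on $\xi_{u_N}$ and on $\Dt^2 u_N$ give compactness), $u_N\to u$ strongly in $L^2(0,T;L^2(\Omega))$ and a.e., hence $P_Nf(u_N)\rightharpoonup f(u)$ in, say, $L^{(p+3)/(p+2)}((0,T)\times\Omega)$ (using the a.e. convergence, the uniform $L^{p+3}$ bound on $u_N$, and a standard argument for identifying weak limits of nonlinear terms, e.g. Lemma 1.3 in Lions' book). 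One then passes to the limit in the distributional formulation \eqref{1.distr}, checking also $P_Ng\to g$ in $L^2$ and $\xi_{u_N}(0)\rightharpoonup\xi_u(0)$ in $\Cal E$, which is precisely convergence \eqref{1.weak}. Thus $u$ is a weak energy solution obtained as a weak-star limit of Galerkin approximations, i.e.\ a Galerkin solution in the sense of Definition \ref{Def1.Gal}. The only mildly delicate point, and the one I would be most careful about, is justifying the energy estimate \eqref{4.galen} uniformly in $N$ when $p>3$: one must make sure that the term $(F(u_N),1)$ really controls $\|u_N\|_{L^{p+3}}^{p+3}$ from below (this is where \eqref{00.3}.3, rather than a mere dissipativity of $f(u)u$, is used) and that the cross term $\beta(\Dt u_N,u_N)$ can be absorbed; everything else is routine.
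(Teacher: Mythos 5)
Your argument is correct and is exactly the standard Galerkin-plus-energy-estimate proof that the paper itself relies on: the paper omits the proof of Proposition \ref{Prop4.Galex}, calling it standard and referring to \cite{ZelDCDS}, and your multiplication by $\Dt u_N+\beta u_N$, the uniform dissipative bound, and the Aubin--Lions/a.e.-convergence identification of the nonlinear limit reproduce that argument faithfully. Your side remark that for $p>3$ the natural hypothesis is $\xi_0\in E$ (with $P_N\xi_0$ smooth in any case) is also the correct reading of the statement, so no changes are needed.
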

The assertion of this proposition is standard, so its proof is omitted, see, e.g., \cite{ZelDCDS} for more details.
\par
Our next aim is to state the analogue of the energy inequality for Galerkin solutions. We first note that, arguing in a standard way, one derives the following dissipative energy estimate for the Galerkin approximations $u_N(t)$ (which are the solutions of \eqref{1.galerkin}):
%$$
\begin{equation}\label{4.energyN}
\|\xi_{u_N}(t)\|_{E}^2+\int_t^\infty\|\Dt u_N(\tau)\|^2_{L^2}\,d\tau\le C\|\xi_{u_N}(s)\|^2_Ee^{-\alpha(t-s)}+C(1+\|g\|^2_{L^2}),
\end{equation}
%$$
where $0\le s\le t$ and positive constants $C$ and $\alpha$ are independent of $s$, $t$, $N$ and $u_N$, see \cite{ZelDCDS}.
However, since we do not have the strong convergence $\xi_{u_N}(\tau)\to\xi_{u}(\tau)$ in $E$, we cannot pass to the limit in \eqref{4.energyN} at least in a straightforward way, so we cannot guarantee that \eqref{4.energyN} will remain true for the limit Galerkin solution $u(t)$. To overcome this difficulty, we need to introduce, following again \cite{ZelDCDS}, the so-called $M$-energy functional which generalizes the usual energy functional.

\begin{definition}\label{Def4.M} Let the assumptions of Proposition \ref{Prop4.Galex} hold and let $u$ be a Galerkin solution of problem \eqref{0.eqmain}. We define the
functional $M_u(t)$, $t\ge0$, by the following expression:
%$$
\begin{equation}\label{4.M}
M_u(t):=\inf\bigg\{\liminf_{k\to\infty}\|\xi_{u_{N_k}}(t)\|_{E}\,:\ \
\xi_{u_{N_k}}\rightharpoonup\xi_u, \
\xi_{u_{N_k}}(0)\rightharpoonup\xi_u(0)\bigg\},
\end{equation}
%$$
where the external infinum in the right-hand side of \eqref{4.M}
is taken over all sequences of the Galerkin approximations
 $\{\xi_{u_{N_k}}(t)\}_{k=1}^\infty$ which converge weakly-$*$ in $L^\infty_{loc}(\R_+,E)$ to the
given Galerkin solution~$u$.
\end{definition}
The following corollary gives simple, but important properties of the
 $M$-energy functional.
\begin{corollary}\label{Cor4.M} Let the assumptions of Proposition \ref{Prop4.Galex} hold. Then, for every Galerkin solution $u$ of equation \eqref{0.eqmain}, the following estimates hold:
%$$
\begin{equation}\label{4.34}
M_u(t)<\infty,\ \ \|\xi_u(t)\|_{E}\le M_u(t),
\ \
M_{T_hu}(t)\le M_u(t+h), \ h\ge0,
\end{equation}
%$$
where $(T_hu)(t):=u(t+h)$,
and
%$$
\begin{equation}\label{4.Men}
M_u(t)^2+\int_t^\infty\|\Dt u(t)\|_{L^2}^2\,dt\le
 CM_u(s)^2e^{-\alpha(t-s)}+C(1+\|g\|_{L^2}^2),
\end{equation}
%$$
where $t\ge s\ge0$ and constants $\alpha>0$ and $C>0$ are the same as in
\eqref{4.energyN}.
\end{corollary}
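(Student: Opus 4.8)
The plan is to extract each of the four assertions from the corresponding property of the Galerkin approximations, using the definition of $M_u$ together with standard lower semicontinuity of norms under weak convergence. First I would note that $M_u(t)<\infty$: by Proposition \ref{Prop4.Galex} there is at least one sequence $\xi_{u_{N_k}}\rightharpoonup\xi_u$ realizing the Galerkin solution $u$, and for that sequence the dissipative estimate \eqref{4.energyN} (applied with $s=0$) bounds $\|\xi_{u_{N_k}}(t)\|_E$ uniformly in $k$ in terms of $\|\xi_{u_{N_k}}(0)\|_E$, which is itself bounded since $\xi_{u_{N_k}}(0)\rightharpoonup\xi_u(0)$ in $E$; hence the $\liminf$ over that particular sequence is finite, and the infimum over all admissible sequences is $\le$ that value, so finite. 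For the inequality $\|\xi_u(t)\|_E\le M_u(t)$: for every admissible sequence we have $\xi_{u_{N_k}}(t)\rightharpoonup\xi_u(t)$ weakly in $E$ (this follows from weak-$*$ convergence in $L^\infty_{loc}$ together with the equation, exactly as in \eqref{2.weak}), so by weak lower semicontinuity of the $E$-norm $\|\xi_u(t)\|_E\le\liminf_k\|\xi_{u_{N_k}}(t)\|_E$; taking the infimum over admissible sequences gives the claim.

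Next, the semigroup-type inequality $M_{T_hu}(t)\le M_u(t+h)$: if $\{\xi_{u_{N_k}}\}$ is an admissible sequence for $u$ on $\R_+$, then the shifted sequence $\{\xi_{u_{N_k}}(\cdot+h)\}$ converges weakly-$*$ in $L^\infty_{loc}(\R_+,E)$ to $T_hu$ and its initial data $\xi_{u_{N_k}}(h)$ converge weakly in $E$ to $(T_hu)(0)=u(h)$ (again by \eqref{2.weak}); hence it is admissible for $T_hu$, and its value at time $t$ is $\|\xi_{u_{N_k}}(t+h)\|_E$. Therefore $M_{T_hu}(t)\le\liminf_k\|\xi_{u_{N_k}}(t+h)\|_E$ for every admissible sequence for $u$, and taking the infimum over all such sequences yields $M_{T_hu}(t)\le M_u(t+h)$.

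The main point is the dissipative estimate \eqref{4.Men}, and here the subtlety noted before the corollary — the lack of strong convergence $\xi_{u_{N_k}}(\tau)\to\xi_u(\tau)$ in $E$ — is precisely what the definition of $M_u$ is designed to circumvent. Fix $t\ge s\ge0$. Given any admissible sequence $\{\xi_{u_{N_k}}\}$ for $u$ realizing the value $M_u(s)$ up to an arbitrarily small error at time $s$ (such sequences exist by definition of the infimum; strictly speaking one passes to a further subsequence along which $\|\xi_{u_{N_k}}(s)\|_E$ converges), apply the Galerkin estimate \eqref{4.energyN} with these two times to get
\begin{equation}\label{4.Mpre}
\|\xi_{u_{N_k}}(t)\|_E^2+\int_t^\infty\|\Dt u_{N_k}(\tau)\|^2_{L^2}\,d\tau\le C\|\xi_{u_{N_k}}(s)\|_E^2e^{-\alpha(t-s)}+C(1+\|g\|^2_{L^2}).
\end{equation}
Now pass to the limit $k\to\infty$. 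On the left, $\liminf_k\|\xi_{u_{N_k}}(t)\|_E^2\ge M_u(t)^2$ would require this sequence to be admissible as a competitor for $M_u(t)$ as well, which it is (it is the same sequence, which converges to $u$ with the right initial data), so $\|\xi_{u_{N_k}}(t)\|_E\ge$ does not immediately give $M_u(t)$; instead one uses $M_u(t)\le\liminf_k\|\xi_{u_{N_k}}(t)\|_E$ directly from the definition of $M_u(t)$ applied to this admissible sequence. For the dissipation integral, weak-$*$ convergence $\Dt u_{N_k}\rightharpoonup\Dt u$ in $L^2_{loc}(\R_+;L^2)$ gives $\int_t^\infty\|\Dt u(\tau)\|^2_{L^2}\,d\tau\le\liminf_k\int_t^\infty\|\Dt u_{N_k}(\tau)\|^2_{L^2}\,d\tau$ by weak lower semicontinuity (on each finite subinterval, then Fatou). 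On the right-hand side, our choice of sequence makes $\|\xi_{u_{N_k}}(s)\|_E^2\to M_u(s)^2$ (up to the small error, which we let tend to zero at the end). Combining,
\begin{equation}\label{4.Mfin}
M_u(t)^2+\int_t^\infty\|\Dt u(\tau)\|^2_{L^2}\,d\tau\le CM_u(s)^2e^{-\alpha(t-s)}+C(1+\|g\|^2_{L^2}),
\end{equation}
which is \eqref{4.Men}. The one genuine obstacle is the bookkeeping in the limit passage on \eqref{4.Mpre}: one must choose the admissible sequence so that it nearly realizes $M_u(s)$ at the earlier time $s$ while still being a legitimate competitor bounding $M_u(t)$ from above at the later time, and one must be careful that the infimum in the definition of $M_u(s)$ is attained along the chosen subsequence (otherwise insert an $\eb$ and send it to $0$). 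All of this is standard once the definition of $M_u$ is set up, and indeed this is exactly the argument carried out in \cite{ZelDCDS}.
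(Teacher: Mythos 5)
Your proposal is correct and follows essentially the same route as the paper, which simply states that \eqref{4.34} is immediate from the definitions of Galerkin solutions and of $M_u$, and that \eqref{4.Men} is obtained by passing to the limit $N_k\to\infty$ in the uniform Galerkin estimate \eqref{4.energyN}; your write-up just supplies the standard details (pointwise weak convergence $\xi_{u_{N_k}}(t)\rightharpoonup\xi_u(t)$, weak lower semicontinuity of the $E$-norm and of the dissipation integral, admissibility of shifted and of near-optimal subsequences, and the $\eb$-argument for the infimum).
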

Indeed, estimates \eqref{4.34} are immediate corollaries
of the definitions of the Galerkin solution and the functional  $M_u(t)$ and estimates
 \eqref{4.Men} follow from estimate \eqref{4.energyN} in which we pass to
the limit $N_k\to\infty$.

\begin{remark}\label{Rem4.M} It is well-known (see \cite{BV}) that, in the case $p\le1$,
 we have the strong convergence of Galerkin approximations and, consequently,
%$$
\begin{equation}\label{4.Mgood}
\|\xi_u(t)\|_{E}=M_u(t).
\end{equation}
%$$
So, in this case,  the $M$-energy coincides with the classical one.
Moreover, the same equality will hold in the case when $p\le3$ and $u$ is a Shatah-Struwe solution of problem \eqref{0.eqmain} due to Corollary \ref{Cor2.SSGalerkin}.
But to the best of our knowledge, neither identity
\eqref{4.Mgood}  nor the fact that any solution
$\xi_u\in L^\infty(\R_+,E)$ of \thetag{0.1} can
be obtained as a limit of the
 Galerkin approximations  are known
in the supercritical case $p>3$. Nevertheless, if the solution
$\xi_u(t)$ of problem \eqref{0.eqmain} is  sufficiently regular:
$$
\xi_u\in L^\infty(\R_+,\Cal E_1),\ \
 \Cal E_1:=[H^2(\Omega)\cap H^1_0(\Omega)]\times H^1_0(\Omega),
$$
then it is unique and, arguing as in Corollary \ref{Cor2.SSGalerkin}, we
can show the strong convergence of Galerkin approximations and equality \eqref{4.Mgood}.
\par
It also worth to emphasize that,  in contrast to  the usual energy functional,
the functional $M_u(t)$ is not a priori local with respect to $t$,
i.e. $M_u(T)$ depends not only on $\xi_u(T)$, but also on the whole
trajectory $u$.
\end{remark}
We are now ready to build up a trajectory dynamical system associated with equation \eqref{0.eqmain}, see \cite{CV,CV1,ZelDCDS} for more details.
\begin{definition}\label{Def4.tr} We define the trajectory phase space $K^+$ of
problem \eqref{0.eqmain} as the
 set of all its {\it Galerkin} solutions which correspond to all possible initial data $\xi_0\in E$,  see Definition \ref{Def1.Gal}.
Obviously, $K^+$ is a subset of $L^\infty(\R_+,E)$.
\par
We endow
the trajectory phase space $K^+$ with the topology induced by the
embedding
$$
K^+\subset \Theta^+:=[L^\infty_{loc}(\R_+,E)]^{w^*},
$$
i.e. by the weak-$*$ topology of the space $L^\infty_{loc}(\R_+, E)$.
\par
We also introduce the group of positive time shifts:
%$$
\begin{equation}\label{4.trsem}
T_h:\Theta^+\to\Theta^+,\ h\ge0,\  \ (T_hu)(t):=u(t+h).
\end{equation}
%$$
Then, as not difficult to see,  semigroup \eqref{4.trsem} acts on the trajectory
phase space $K^+$:
%$$
\begin{equation}\label{4.trdyn}
T_h: K^+\to K^+.
\end{equation}
%$$
Semigroup \eqref{4.trdyn} acting on the topological space $K^+$
is called the trajectory dynamical system associated with
equation \eqref{0.eqmain}.
\end{definition}
\begin{remark}\label{Rem4.equiv} As known,  in the  case $p\le1$, the Galerkin solution $u(t)$
of equation \eqref{0.eqmain} is unique and, consequently, this
equation generates a semigroup in the classical energy phase space $\Cal E$
in a standard way:
%$$
\begin{equation}\label{4.30}
S(t): \Cal E\to \Cal E,\ \ t\ge0,\ \ S(t)\xi_u(0):=\xi_u(t).
\end{equation}
%$$
Moreover, in this case, the map
%$$
\begin{equation}\label{4.31}
\Pi_{t=0}:K^+\to \Cal E,\ \ \Pi_{t=0}\xi_u=\xi_u(0),
\end{equation}
%$$
where, by definition, $\Pi_{t=0}u=\xi_u(0)$,
is one to one and realizes a (sequential) homeomorphism between
$K^+$ and $\Cal E^{w}$ (= the space $\Cal E$ endowed by the weak topology). Thus,
%$$
\begin{equation}\label{4.32}
S(t)=\Pi_{t=0}\circ T_t\circ (\Pi_{t=0})^{-1},
\end{equation}
%$$
and, therefore,
the trajectory dynamical system \eqref{4.trdyn} is conjugated
to the classical dynamical system \eqref{4.30} defined on the usual energy
phase space $\Cal E$ endowed with the weak topology.
\par
We note however  that, for fast growing nonlinearities, the uniqueness
problem for \eqref{0.eqmain} is not solved yet (in particular, even in the most interesting for our purposes quintic case $p=3$, the uniqueness of Galerkin solutions is not known) and under the classical approach,  semigroup
\eqref{4.30} can be defined as a semigroup of multivalued maps only. The use of the trajectory dynamical
system \eqref{4.trdyn} allows us  to avoid  multivalued maps
and to apply the standard  attractor theory
in order to study the long time behavior of solutions of~\eqref{0.eqmain}
in the supercritical case.
\end{remark}
As the next step, we intend to define the attractor of the introduced trajectory dynamical system. As usual (see e.g. \cite{CV,CV1,ZelDCDS}), in order to
define the global attractor of the semigroup \eqref{4.trdyn}, we first need to
define the class of bounded sets which will be attracted by
this attractor.
\begin{definition}\label{Def4.bound} A set $B\subset K^+$ is called
$M$-bounded  if the following quantity is finite:
%$$
\begin{equation}\label{4.bound}
\|B\|_M:=\sup_{\xi_u\in B}M_u(0)<\infty.
\end{equation}
%$$
In other words, the set $B\subset K^+$ is $M$-bounded if the modified
energy of all the solutions belonging to $B$ is uniformly bounded.
\end{definition}

\begin{definition}\label{Def4.trattr} A set $\Cal A^{tr}$ is a
global attractor of the trajectory dynamical system \eqref{4.trdyn}
(= the trajectory attractor of equation \eqref{0.eqmain}) if the
following conditions hold:
\par
1.\ \ The set $\Cal A^{tr}$ is a compact $M$-bounded set in $K^+$.
\par
2.\ \ This set is strictly invariant, i.e.  $T_h\Cal A^{tr}=\Cal
  A^{tr}$, for $h\ge0$.
\par
3.\ \ This set is an attracting set for semigroup \eqref{4.trdyn},
i.e. for every $M$-bounded  subset
$B\subset K^+$ and every neighborhood $\Cal O(\Cal A^{tr})$
of $\Cal A^{tr}$ in $K^+$,
 there exists  $T=T(B,\Cal O)$ such that
%$$
\begin{equation}\label{4.39}
T_hB\subset\Cal O(\Cal A^{tr}),\ \ \text{ for }\ \ h\ge T.
\end{equation}
%$$
\end{definition}
The next theorem establishes the existence of the attractor $\Cal A^{tr}$ for the
trajectory dynamical system associated with problem \eqref{0.eqmain}.
\begin{theorem}\label{Th4.trattr} Let the assumptions of Proposition \ref{Prop4.Galex} hold.
Then, semigroup \eqref{4.trdyn} possesses a global attractor $\Cal
A^{tr}$ in the sense of Definition \ref{Def4.trattr} which can be described in the
following way:
%$$
\begin{equation}\label{4.40}
\Cal A^{tr}=\Pi_{t\ge0}\Cal K,\ \ \Pi_{t\ge0}u:=u\big|_{t\ge0}.
\end{equation}
%$$
Here $\Cal K\subset L^\infty(\R, E)$ is the set of all the complete
solutions of problem \eqref{0.eqmain} which are defined
for all $t\in\R$ and can be obtained as a Galerkin limit, i.e.
$\xi_u\in\Cal K$  if and only if  there exist a sequence
of times
$t_k\to-\infty$ and a sequence of solutions $\xi_{u_{N_k}}(t)$
of the problems:
%$$
\begin{equation}\label{4.41}
\begin{cases}
\Dt^2 u_{N_k}+\gamma\Dt u_{N_k}-\Dx u_{N_k}+ P_{N_k} f(u_{N_k})=g_{N_k},
\\
 \xi_{u_{N_k}}(t_k)=\xi^0_k\in E_{N_k}, \ \ t\ge t_k,
\end{cases}
\end{equation}
%$$
where $E_{N_k}:=P_{N_k} E$, such that
%$$
\begin{equation}\label{4.42}
\|\xi_k^0\|_{E}\le C,\ \ \text{ and }\ \
\xi_u=\Theta-\lim_{k\to\infty}\xi_{u_{N_k}},
\end{equation}
%$$
where $C$ is independent of $k$ and
%$$
\begin{equation}\label{4.43}
\Theta:=\bigg[L^\infty_{loc}(\R, E)\bigg]^{w^*}.
\end{equation}
%$$
\end{theorem}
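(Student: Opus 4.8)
The plan is to apply the abstract attractor existence theorem for trajectory dynamical systems (as developed in \cite{CV,CV1,ZelDCDS}) to the semigroup $T_h:K^+\to K^+$. The three ingredients needed are: (i) a compact $M$-bounded absorbing set for $T_h$ in $K^+$; (ii) the continuity (or at least closedness of the graph) of the maps $T_h$ in the topology of $\Theta^+$; and (iii) identification of the attractor via the complete bounded trajectories. First I would use the dissipative $M$-energy estimate \eqref{4.Men}: for any Galerkin solution $u$ and any $M$-bounded set $B$, one has $M_u(t)^2\le CM_u(0)^2e^{-\alpha t}+C(1+\|g\|_{L^2}^2)$, so after a time $T=T(B)$ the shifted solutions $T_hu$ satisfy $M_{T_hu}(0)=M_{T_hu}(0)\le M_u(h)\le R_0^2:=2C(1+\|g\|_{L^2}^2)$, using also the monotonicity $M_{T_hu}(t)\le M_u(t+h)$ from \eqref{4.34}. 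Hence the set
$$
\Cal B:=\{\,\xi_u\in K^+:\ M_u(t)^2\le CR_0^2e^{-\alpha t}+R_0^2\ \ \forall t\ge0\,\}
$$
is $M$-bounded and absorbs every $M$-bounded subset of $K^+$ in finite time; moreover $T_h\Cal B\subset\Cal B$ for all $h\ge0$.

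The key point is then the compactness of $\Cal B$ in $\Theta^+=[L^\infty_{loc}(\R_+,E)]^{w^*}$ together with the closedness of $K^+$ and of $\Cal B$ in that topology. For compactness I would observe that membership in $\Cal B$ forces, via $\|\xi_u(t)\|_E\le M_u(t)$, a uniform bound of $\xi_u$ in $L^\infty(\R_+,E)$; by Banach--Alaoglu and a diagonal extraction over an exhausting sequence of intervals $[0,n]$, any sequence in $\Cal B$ has a subsequence converging weak-$*$ in $L^\infty_{loc}(\R_+,E)$, and since $E$ is reflexive (being a product of reflexive spaces) $\Theta^+$ restricted to such bounded sets is metrizable, so sequential compactness suffices. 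The more delicate task is to show that the weak-$*$ limit $\bar u$ of a sequence $u^{(m)}\in\Cal B\subset K^+$ is again a Galerkin solution, i.e. lies in $K^+$, and in fact lies in $\Cal B$. For this one uses that each $u^{(m)}$ is itself a weak-$*$ limit of genuine Galerkin approximations $u^{(m)}_{N}$ of \eqref{1.galerkin}; passing to a diagonal sequence of these approximations (exploiting that $\Theta^+$ on bounded sets is metrizable so one can pick, for each $m$, an index $N_m$ with $u^{(m)}_{N_m}$ within distance $1/m$ of $u^{(m)}$ on $[0,m]$) produces a single sequence of Galerkin approximations converging weak-$*$ to $\bar u$. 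The nonlinear term passes to the limit in the sense of distributions because the dissipativity assumption \eqref{00.3}(3) gives a uniform bound $P_{N}f(u_{N})$ in, say, $L^\infty_{loc}(\R_+,L^{(p+3)/(p+2)})$ together with a.e.\ convergence coming from the compactness of the embedding $E\hookrightarrow\hookrightarrow L^2_{loc}$ in space and an Aubin--Lions argument in time (using $\Dt^2 u_N\in L^\infty_{loc}(H^{-1})$ from the equation). Hence $\bar u$ solves \eqref{0.eqmain} and is a Galerkin solution; the defining inequalities of $\Cal B$ survive the limit because $M_{\bar u}(t)\le\liminf M_{u^{(m)}}(t)$ by the very definition \eqref{4.M} of the $M$-functional as an infimum of liminfs, so $\bar u\in\Cal B$. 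The same diagonal argument shows $K^+$ is closed in $\Theta^+$.

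With $\Cal B$ a compact, $M$-bounded, strictly forward-invariant absorbing set, the abstract theorem yields the trajectory attractor $\Cal A^{tr}=\bigcap_{h\ge0}\overline{\bigcup_{\tau\ge h}T_\tau\Cal B}$, which is compact, strictly invariant ($T_h\Cal A^{tr}=\Cal A^{tr}$), and attracts every $M$-bounded set. Finally, for the representation \eqref{4.40} I would argue as usual: if $\xi_u\in\Cal A^{tr}$ then by strict invariance there is, for every $k$, an element $\xi_{u^{(k)}}\in\Cal A^{tr}$ with $T_k u^{(k)}=u$; gluing these together using the backward shifts produces a complete trajectory $v\in L^\infty(\R,E)$ with $v|_{t\ge0}=u$, and by running the diagonal Galerkin argument above on each window $[-k,k]$ one realizes $v$ as a $\Theta$-limit of Galerkin approximations started at times $t_k=-k\to-\infty$ with uniformly bounded data — i.e.\ $v\in\Cal K$ and $u=\Pi_{t\ge0}v\in\Pi_{t\ge0}\Cal K$. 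Conversely, any $v\in\Cal K$ has $M_v(t)$ bounded for all $t$ by \eqref{4.Men} applied on $(-\infty,t]$ (letting $s\to-\infty$), hence $\Pi_{t\ge0}v$ lies in an $M$-bounded, shift-invariant subset of $K^+$, so it is contained in the attractor. I expect the main obstacle to be precisely the interchange of the two limiting procedures — limits of Galerkin solutions are not a priori Galerkin solutions — and the care needed to run the diagonal extraction while keeping control of the nonlinearity through the weak-$*$ bound plus a.e.\ convergence; the $M$-energy functional is the device that makes the resulting estimates stable under this double limit.
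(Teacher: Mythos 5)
Your plan is correct and follows essentially the same route as the paper, which does not prove Theorem \ref{Th4.trattr} itself but defers to \cite{ZelDCDS}: there the trajectory attractor is obtained exactly via the dissipative $M$-energy estimate \eqref{4.Men} giving an $M$-bounded absorbing set, weak-$*$ compactness plus a diagonal extraction showing that limits of Galerkin solutions are again Galerkin solutions (with the nonlinearity handled through the uniform $L^{(p+3)/(p+2)}$ bound and a.e.\ convergence), and the standard identification of the attractor with the complete bounded Galerkin trajectories $\Cal K$. Your outline correctly isolates the key difficulty (stability of the Galerkin class and of the $M$-functional under the double limit); the remaining points you gloss over (testing $P_Nf(u_N)$ against $P_M\phi$ for fixed $M$, and the per-$t$ choice of diagonal sequences in the lower semicontinuity of $M_u(t)$) are routine and handled as in \cite{ZelDCDS}.
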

For the proof of this theorem see \cite{ZelDCDS}.
\par
The next standard assertion utilizes the gradient structure of equation \eqref{0.eqmain}.
\begin{corollary}\label{Cor4.grad} Let the assumptions of Theorem \ref{Th4.trattr} hold and
let $\xi_u\in\Cal K$. Then,
%$$
\begin{equation}\label{4.disint}
\int_{-\infty}^{+\infty}\|\Dt u(s)\|_{L^2}^2\,ds\le C(1+\|g\|^2_{L^2}),
\end{equation}
%$$
where the constant $C$ is the same as in \eqref{4.Men},
and moreover, for every $1\ge\beta>0$,
%$$
\begin{equation}\label{4.Dtconv}
\Dt u\in C_b(\R, H^{-\beta}(\Omega)) \ \text{ and } \
\lim_{t\to\pm\infty}\|\Dt u(t)\|_{H^{-\beta}(\Omega)}=0.
\end{equation}
%$$
\end{corollary}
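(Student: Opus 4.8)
The plan is to exploit the two pieces of information we already have about a complete trajectory $\xi_u\in\Cal K$: the $M$-energy dissipation estimate \eqref{4.Men} and the fact (from Theorem \ref{Th4.trattr}) that the $M$-energy of any such trajectory is {\it uniformly} bounded on the whole line, $M_u(\tau)\le C_1$ for all $\tau\in\R$. First I would let $s\to-\infty$ in \eqref{4.Men}. Writing the inequality as
$$
M_u(t)^2+\int_t^\infty\|\Dt u(\sigma)\|^2_{L^2}\,d\sigma\le C M_u(s)^2 e^{-\alpha(t-s)}+C(1+\|g\|^2_{L^2}),
$$
we use $M_u(s)^2 e^{-\alpha(t-s)}\le C_1^2 e^{-\alpha(t-s)}\to0$ as $s\to-\infty$ with $t$ fixed, so that $\int_t^\infty\|\Dt u(\sigma)\|^2_{L^2}\,d\sigma\le C(1+\|g\|^2_{L^2})$ for every $t\in\R$; letting $t\to-\infty$ and applying monotone convergence gives \eqref{4.disint}. (One must be slightly careful that \eqref{4.Men} is stated for the Galerkin solution $u$ with $t\ge s\ge 0$; for a complete trajectory $\xi_u\in\Cal K$ one applies it to every shift $T_h u$, $h\ge0$, using the shift property $M_{T_h u}(t)\le M_u(t+h)$ from \eqref{4.34}, which propagates the dissipation estimate to all of $\R$.)

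Next I would prove \eqref{4.Dtconv}. The membership $\Dt u\in C_b(\R,H^{-\beta})$ for $\beta\in(0,1]$ follows from interpolation: $\Dt u\in L^\infty(\R,L^2)$ by the energy bound, while $\Dt^2 u=\Dx u-\gamma\Dt u-f(u)+g\in L^\infty(\R,H^{-1})$ (using the growth restriction on $f$ and the $L^\infty(\R,E)$ bound, exactly as in the discussion after Definition \ref{Def1.simple}). Hence $\Dt u\in C_b(\R,H^{-1})$, and interpolating between $C_b(\R,H^{-1})$ and $L^\infty(\R,L^2)$ gives $\Dt u\in C_b(\R,H^{-\beta})$ together with a modulus of continuity that is uniform in $\R$. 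For the decay at $\pm\infty$, suppose it fails, say along $t_k\to+\infty$ with $\|\Dt u(t_k)\|_{H^{-\beta}}\ge\delta_0>0$. By the uniform continuity just established there is a fixed $r>0$ such that $\|\Dt u(t)\|_{H^{-\beta}}\ge\delta_0/2$ for all $t\in[t_k-r,t_k+r]$. Using $\|\cdot\|_{H^{-\beta}}\le C\|\cdot\|_{L^2}$ when $\beta\le 1$ — wait, that inequality goes the wrong way, so instead I would interpolate the other direction: $\|\Dt u(t)\|_{H^{-\beta}}\le C\|\Dt u(t)\|_{L^2}^{1-\beta}\|\Dt u(t)\|_{H^{-1}}^{\beta}\le C'\|\Dt u(t)\|_{L^2}^{1-\beta}$, so $\|\Dt u(t)\|_{L^2}\ge c\,\delta_0^{1/(1-\beta)}$ on each interval $[t_k-r,t_k+r]$. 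Passing to a subsequence with disjoint such intervals, we get $\int_{-\infty}^{+\infty}\|\Dt u(s)\|_{L^2}^2\,ds=+\infty$, contradicting \eqref{4.disint}. The same argument handles $t_k\to-\infty$.

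The only genuinely delicate point is the bookkeeping in the first step: \eqref{4.Men} as written applies to the Galerkin solution $u$ defined on $\R_+$, whereas $\xi_u\in\Cal K$ is a complete trajectory obtained as a limit of solutions $\xi_{u_{N_k}}$ started at times $t_k\to-\infty$. I expect this to be the main obstacle, and I would resolve it by observing that each restriction $T_{-t_k}\circ(\text{something})$ — more precisely, each $u(\cdot+h)$ for $h\ge \sup_k t_k$ — is itself a Galerkin solution on $\R_+$ in the sense of Definition \ref{Def1.Gal} (it is the $\Theta$-limit of $\xi_{u_{N_k}}(\cdot+h)$), so \eqref{4.Men} and the $M$-energy bound $M_u\le C_1$ apply to it on $\R_+$; translating back and using \eqref{4.34} gives the estimate on all of $\R$. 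Everything else is a routine interpolation-and-contradiction argument of the type used elsewhere in the paper.
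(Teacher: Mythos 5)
Your proposal is correct and is essentially the paper's own (only sketched) argument: the dissipation integral is obtained from \eqref{4.Men}/\eqref{4.energyN} combined with the definition of $\Cal K$ — the approximating Galerkin solutions start at times $t_k\to-\infty$ with uniformly bounded energy, so the exponential term disappears and one passes to the limit, exactly the shift/limit bookkeeping you describe — and the decay \eqref{4.Dtconv} then follows from this integral together with the uniform continuity of $\Dt u$ in a negative Sobolev norm (the embedding $\Theta\subset C_{loc}(\R,H^{-\beta})$ invoked in the paper, with details in \cite{ZelDCDS}). Two small touch-ups: the inequality $\|v\|_{H^{-\beta}}\le C\|v\|_{L^2}$ that you discarded is in fact valid (it is just the embedding $L^2\subset H^{-\beta}$) and, used directly, also covers the endpoint $\beta=1$ where your interpolation exponent $1/(1-\beta)$ degenerates; and since Theorem \ref{Th4.trattr} allows supercritical $p>3$, $f(u)$ need only lie in $L^\infty(\R,L^{(p+3)/(p+2)})\subset L^\infty(\R,H^{-s})$ for some $s>1$ rather than $H^{-1}$, so the continuity argument should be run with $H^{-s}$ in place of $H^{-1}$, which changes nothing in the conclusion for $0<\beta\le1$.
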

  Indeed, the finiteness of the dissipation integral is an immediate corollary of estimate \eqref{4.Men} and the definition of the set $\Cal K$ and the convergence \eqref{4.Dtconv} follows from this integral and from the embedding $\Theta\subset C_{loc}(R,H^{-\beta}(\Omega))$, see \cite{ZelDCDS} for more details.
\par
The next theorem which establishes the backward regularity of solutions on the trajectory attractor $\Cal A^{tr}$ is crucial for our proof of asymptotic compactness for the quintic case, see the next section.
\begin{theorem}\label{Th4.backsmooth} Let the assumptions of Theorem \ref{Th4.trattr} hold. Then,
for every complete Galerkin solution $\xi_u\in\Cal K$ of equation
\eqref{0.eqmain}, there exists a time $T=T_u$ such that
%$$
\begin{equation}\label{4.bsm}
\xi_u\in C_b((-\infty,T], \Cal E_1)
\end{equation}
%$$
and
%$$
\begin{equation}\label{4.bbb}
\|\xi_u\|_{C_b(-\infty,T; \Cal E_1)}\le C,
\end{equation}
%$$
where the constant $C$ is independent of $u\in\Cal K$.
\end{theorem}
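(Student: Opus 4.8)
The plan is to exploit the finiteness of the dissipation integral \eqref{4.disint} together with the convergence \eqref{4.Dtconv} in order to find a time level $T_u$ at which the solution is ``almost stationary'' in a quantitative sense, and then to bootstrap from that smallness to full $\Cal E_1$-regularity, uniformly on $(-\infty, T_u]$. First, I would observe that by \eqref{4.Men} and the definition of $\Cal K$ the quantity $M_u(t)$ (equivalently, the energy norm $\|\xi_u(t)\|_E$) is bounded by an absolute constant $C_\ast = C(1+\|g\|_{L^2}^2)^{1/2}$ for all $t\in\R$ and all $u\in\Cal K$. Next, since $\int_{-\infty}^{+\infty}\|\Dt u(s)\|_{L^2}^2\,ds \le C(1+\|g\|_{L^2}^2)$, for every $\eb>0$ there is a $T_u$ (depending on $u$, but whose existence is guaranteed uniformly) such that $\int_{-\infty}^{T_u}\|\Dt u(s)\|_{L^2}^2\,ds \le \eb$. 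Combining this with \eqref{4.Dtconv}, one sees that on the half-line $t\le T_u$ the solution $u(t)$ is an $\eb$-small (in a space--time integrated sense, plus pointwise small in $H^{-\beta}$) perturbation of a solution of the stationary elliptic problem $-\Dx u_\infty + f(u_\infty) = g$.

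The core of the argument is then a regularity bootstrap for $u$ on $(-\infty,T_u]$. I would differentiate the equation in time and set $v := \Dt u$, so that $v$ solves $\Dt^2 v + \gamma\Dt v - \Dx v = -f'(u)v$ with $f'(u)\ge -K$ by \eqref{00.3}(3). Multiplying by $\Dt v$ and using the monotonicity sign of $f'(u)$ for the damping, together with $\int \|v(s)\|_{L^2}^2\,ds$ small, one obtains (via a Gronwall-type / uniform Gronwall argument over sliding unit time windows on the half-line $t\le T_u$) a uniform bound $\|\xi_v(t)\|_{\Cal E}\le C$ for $t\le T_u$; the smallness of the dissipation integral is exactly what lets one absorb the ``bad'' term $-(f'(u)v,\Dt v)$ and start the iteration without a priori control of the Strichartz norm (which, unlike in Section \ref{s2}, is not available here since $u$ need only be a Galerkin solution). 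Once $\Dt u \in L^\infty((-\infty,T_u];L^2)$ and $\Dt^2 u\in L^\infty((-\infty,T_u];H^{-1})$ are known, I would rewrite \eqref{0.eqmain} as the elliptic equation $\Dx u - f(u) = g - \Dt^2 u - \gamma\Dt u =: g_u(t) \in L^\infty((-\infty,T_u];L^2)$ and invoke maximal elliptic regularity for the monotone nonlinearity $f$ (using $f'(u)\ge -K$, exactly as in Proposition \ref{Prop2.smooth}, estimates \eqref{2.ell}--\eqref{2.maximal}) to conclude $u(t)\in H^2\cap H^1_0$ with $\|u(t)\|_{H^2}\le C$ uniformly for $t\le T_u$. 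This gives \eqref{4.bsm} and \eqref{4.bbb}, and the continuity in time follows from the energy identity for the regularized problem exactly as before.

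Throughout, all estimates must be carried out first on the Galerkin approximations $u_{N_k}$ (which are genuinely smooth) and then passed to the limit, since for $p>3$ the Galerkin solution is not known to be unique and the multiplication by $\Dt v$ and by $\Dx u$ is only formally justified on the limit; this is the same device used in Proposition \ref{Prop2.smooth}. The main obstacle, and the delicate point of the whole theorem, is making the bootstrap \emph{uniform in $u\in\Cal K$ and uniform as $t\to-\infty$}: the time $T_u$ itself genuinely depends on $u$ (because the dissipation integral, though bounded by a universal constant, concentrates at an $u$-dependent location), but once $T_u$ is chosen so that the tail integral is below a fixed threshold $\eb_0$ — a threshold determined only by $\gamma$, $K$ and the structure constants in \eqref{00.3}, not by $u$ — the resulting bound $C$ in \eqref{4.bbb} must come out independent of $u$. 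Verifying that the Gronwall/absorption step closes with an $\eb_0$ and a final constant $C$ depending only on $\|g\|_{L^2}$ and the fixed data, and not on the particular complete trajectory, is where the care is needed; the rest is routine elliptic bootstrapping, and I would refer to \cite{ZelDCDS} for the parallel computations.
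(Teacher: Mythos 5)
Your proposal has a genuine gap at its central step, and it is exactly the step on which the whole theorem hinges. When you set $v=\Dt u$ and multiply the differentiated equation by $\Dt v$, the term $(f'(u)v,\Dt v)$ cannot be estimated using only the regularity available for a Galerkin solution: for $p=3$ one has $u\in H^1_0\subset L^6$, hence $f'(u)\in L^{3/2}$, and even granting $v\in L^6$ (which is part of what you are trying to prove) the product $f'(u)v$ lies only in $L^{6/5}$, while $\Dt v$ is merely in $L^2$; the pairing is not even finite a priori, and for $p>3$ the situation is worse. To run a Gronwall argument one needs $|(f'(u)v,\Dt v)|\le \ell(t)\|\xi_v\|^2_{\Cal E}$ with $\ell(t)\sim 1+\|u(t)\|^4_{L^{12}}$, i.e.\ exactly the Strichartz control \eqref{2.03} that is unavailable for Galerkin solutions. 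The smallness of the tail $\int_{-\infty}^{T_u}\|\Dt u\|^2_{L^2}\,ds$ does not interact with this difficulty at all: it measures the size of $v$ in $L^2_tL^2_x$, whereas the obstruction is the spatial growth of $f'(u)$; no choice of the threshold $\eb_0$ makes the quartic (or $|u|^{p+1}$) weight admissible. Integrating by parts in time does not help either, since $(f'(u)v,\Dt v)=\tfrac12\frac d{dt}(f'(u)v,v)-\tfrac12(f''(u)v^3,1)$ produces terms that are quadratic and cubic in the unknown $\|\xi_v\|_{\Cal E}$ with constants that are not small. Moreover, on the half-line $(-\infty,T_u]$ there is no time at which $\|\xi_v\|_{\Cal E}$ is known to be finite, nor any uniform bound on $\int_t^{t+1}\|\xi_v\|^2_{\Cal E}\,ds$, so the ``uniform Gronwall over sliding windows'' has nothing to start from. (A smaller slip: for the elliptic step you need $\Dt^2u\in L^\infty L^2$, not $L^\infty H^{-1}$, to have $g_u\in L^2$; this would follow from the $\Cal E$-bound on $\xi_v$ you claim, but that bound is precisely what is not established.)

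Note also that the paper does not reprove this theorem; it refers to \cite{ZelDCDS}, where the dissipation integral \eqref{4.disint} is exploited in a different way: together with \eqref{4.Dtconv} it forces the backward trajectory to enter a small energy neighbourhood of the set of equilibria, which is bounded in $\Cal E_1$, and the smoothing estimates are then carried out for the deviation of $u$ from a smooth equilibrium. It is the smallness of that deviation in the energy norm — the same mechanism that gives small-energy solvability in Remark \ref{Rem2.difficult} — and not the smallness of the dissipation tail fed into a Gronwall inequality for $\Dt u$, that compensates the critical/supercritical growth of $f$ in the absence of Strichartz bounds. So your opening observation (choosing $T_u$ so that the trajectory is nearly stationary, hence close to the elliptic problem) points in the right direction, but the bootstrap you build on it would not close; the closeness to the smooth equilibria set has to enter the estimates themselves.
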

The proof of this theorem is essentially based on the finiteness of the dissipation integral \eqref{4.disint} and is given in \cite{ZelDCDS}, see also \cite{Zel1,Zel2} for the analogous results for the hyperbolic Cahn-Hilliard equations.
\par
To conclude the section, we state a version of the so-called weak-strong uniqueness result which shows that the solution
$\xi_u(t)\in\Cal K$ is unique until it is regular, so the non-uniqueness can appear only after the possible blow up of the strong solution.
\begin{theorem}\label{Th4.uni} Let the assumptions of Theorem \ref{Th4.trattr} hold and
$\xi_u\in\Cal K$ be a complete weak solution of \eqref{0.eqmain}
which satisfies \eqref{4.bsm}, for $t\le T$. We  also assume that
$\xi_v\in\Cal K$ is another complete weak solution which
satisfies
%$$
\begin{equation}\label{4.60}
\xi_u(t)=\xi_v(t),\ \ \text{ for all }\ t\le T'<T.
\end{equation}
%$$
Then, necessarily
$$
\xi_u(t)=\xi_v(t),\ \ \text{ for all }\ t\le T.
$$
\end{theorem}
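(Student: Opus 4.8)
The plan is to compare the two complete Galerkin solutions $\xi_u$ and $\xi_v$ on the interval where the first one is known to be strong, namely on $(-\infty,T]$. Set $w(t):=u(t)-v(t)$; by \eqref{4.60} we have $\xi_w(t)=0$ for $t\le T'$, and $w$ solves
\begin{equation}
\Dt^2 w+\gamma\Dt w-\Dx w+[f(u)-f(v)]=0
\end{equation}
on $[T',T]$ with $\xi_w(T')=0$. The key structural observation is that, although $v$ is only a (Galerkin) weak solution and we have no Strichartz control on it, the solution $u$ is regular on this interval by hypothesis \eqref{4.bsm}: $\xi_u\in C_b((-\infty,T],\Cal E_1)$, so in particular $u\in L^\infty(T',T;H^2\cap H^1_0)\subset L^\infty(T',T;C(\overline\Omega))$ by the Sobolev embedding $H^2\subset C$ in three dimensions. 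Thus the difference $f(u)-f(v)$ can be handled by writing $f(u)-f(v)=\ell(t,x)\,w$ with $\ell(t,x)=\int_0^1 f'(v+s w)\,ds$, where, using the growth restriction $|f'(r)|\le C(1+|r|^4)$, one has the pointwise bound $|\ell(t,x)|\le C(1+|u|^4+|v|^4)$.

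The next step is to derive the energy estimate for $w$. First I would justify multiplying the difference equation by $\Dt w$; this is legitimate because the regularity of $u$ is more than sufficient and, for $v$, one passes through the Galerkin approximations exactly as in the proof of Proposition~\ref{Prop2.eneq} and Corollary~\ref{Cor2.uni}, or alternatively one uses the fact that by Corollary~\ref{Cor4.grad} and Theorem~\ref{Th4.backsmooth} the solution $v$ on the relevant interval also has enough regularity coming from the finiteness of the dissipation integral. This yields
\begin{equation}
\frac12\frac d{dt}\|\xi_w(t)\|^2_{\Cal E}+\gamma\|\Dt w(t)\|^2_{L^2}=-(f(u)-f(v),\Dt w).
\end{equation}
To estimate the right-hand side I split $|f(u)-f(v)|\le C(1+|u|^4)|w|+C|v|^4|w|$. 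The term with $u$ is controlled using $u\in L^\infty C(\overline\Omega)$: $|( (1+|u|^4)w,\Dt w)|\le C\|w\|_{L^2}\|\Dt w\|_{L^2}\le C\|\xi_w\|^2_{\Cal E}$. The term with $v$ is the delicate one, since we do not control $v$ in $L^{12}$; here I would use instead the inequality $|v|^4=|u-w|^4\le C(|u|^4+|w|^4)$, absorb $|u|^4$ as before, and treat $\||w|^4 w\|$ by the embedding $H^1_0\subset L^6$: $(|w|^5,|\Dt w|)\le\|w\|_{L^6}^5\|\Dt w\|_{L^2}\le C\|\xi_w\|_{\Cal E}^6$. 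Altogether one obtains a differential inequality of the form $\frac d{dt}\|\xi_w\|_{\Cal E}^2\le C(t)\|\xi_w\|_{\Cal E}^2+C\|\xi_w\|_{\Cal E}^6$ with $C(t)$ integrable on $[T',T]$, $\xi_w(T')=0$. Since $\|\xi_w\|_{\Cal E}$ is continuous and vanishes at $T'$, a standard continuity/bootstrap argument (or Gronwall applied after noting $\|\xi_w\|_{\Cal E}$ stays small) forces $\xi_w\equiv 0$ on $[T',T]$, hence on all of $(-\infty,T]$.

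The main obstacle is precisely the last cubic-type term $|v|^4|w|$: because $v$ is merely a weak solution with no Strichartz regularity, one cannot mimic the uniqueness proof of Corollary~\ref{Cor2.uni} verbatim, and the naive estimate $\|v\|_{L^{12}}^4\|w\|_{L^6}\|\Dt w\|_{L^2}$ is unavailable. The resolution, as indicated above, is to transfer all the "bad" growth onto $u$ (which is bounded in $C(\overline\Omega)$) via $v=u-w$, leaving only the self-interaction term $|w|^5$ which is harmless by Sobolev embedding since it is quintic in the small quantity $w$ and appears with a supercritical-but-higher-order power. A secondary technical point, which I would dispatch by referring to Proposition~\ref{Prop2.eneq} and to \cite{ZelDCDS}, is the rigorous justification of the energy identity for $w$ given that $v$ is only a Galerkin weak solution; this is routine via the Galerkin approximations of $v$ together with the strong regularity of $u$.
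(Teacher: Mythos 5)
Your argument has a genuine gap, and it is located exactly at the point you present as the resolution of the main obstacle. The estimate $(|w|^5,|\Dt w|)\le\|w\|_{L^6}^5\|\Dt w\|_{L^2}$ is not a valid H\"older inequality: pairing $|w|^5$ against $\Dt w\in L^2$ requires $|w|^5\in L^2$, i.e. $w\in L^{10}$, whereas $\|w\|_{L^6}^5$ only controls $|w|^5$ in $L^{6/5}$, whose dual exponent is $6$ -- and neither $\Dt v$ nor $\Dt w$ has better than $L^2$ spatial regularity. Since $v$, and hence $w=u-v$, has only energy regularity on $[T',T]$ (no Strichartz/$L^{12}$ control), the term $\int_\Omega|w|^5|\Dt w|\,dx$ is precisely the critical pairing that cannot be bounded by the energy norm; the substitution $|v|^4\le C(|u|^4+|w|^4)$ merely moves the difficulty from $v$ onto $w$ without removing it. Consequently the differential inequality $\frac d{dt}\|\xi_w\|^2_{\Cal E}\le C(t)\|\xi_w\|^2_{\Cal E}+C\|\xi_w\|^6_{\Cal E}$ is not established and the Gronwall/continuation step has nothing to act on. The second weak point is the energy identity for $w$ itself: this is not ``routine'', and neither of your two justifications works. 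Proposition \ref{Prop2.eneq} needs the Strichartz regularity of the solution, which $v$ does not have on $[T',T]$; and invoking Theorem \ref{Th4.backsmooth} for $v$ is circular, since that theorem gives $\Cal E_1$-regularity of $v$ only up to a time $T_v$ depending on $v$, which may be smaller than $T'$ -- propagating the coincidence past the interval where $v$ is known to be regular is exactly the content of the statement being proved. The paper itself stresses, right after Definition \ref{Def1.simple}, that multiplication by $\Dt$ of the solution is not justified for weak solutions with super-cubic $f$.

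For comparison, the paper does not prove the theorem here but defers to \cite{ZelDCDS}, and the argument there is built so that nothing supercritical is ever paired with $\Dt v$ or $\Dt w$. The structural ingredients are the lower bound $f'\ge-K$ from \eqref{00.3} and a Taylor-remainder rewriting of the dangerous term: $(f(u)-f(v),\Dt w)=\frac d{dt}\int_\Omega\bigl[F(v)-F(u)-f(u)(v-u)\bigr]dx-\int_\Omega\Dt u\,\bigl[f(v)-f(u)-f'(u)(v-u)\bigr]dx$, so that the exact-derivative part is controlled from below by $-\tfrac K2\|w\|_{L^2}^2$, while the remaining term is genuinely quadratic in $w$ and is paired only with the regular factor $\Dt u\in H^1_0\subset L^6$, with $|f''(u+s(v-u))|\le C(1+|u|^3+|v|^3)$ handled via $u\in L^\infty$ (from \eqref{4.bsm} and $H^2\subset C$) and $|v|^3\in L^2$; Gronwall applied to $\tfrac12\|\xi_w\|^2_{\Cal E}+\int_\Omega[F(v)-F(u)-f(u)(v-u)]dx+\tfrac K2\|w\|^2_{L^2}$ then closes the estimate. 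The identity itself is derived through the Galerkin approximations of $v$ (energy inequality for $v$, energy equality for the regular $u$, and cross terms tested against the admissible regular functions built from $u$), which is where the $\Cal E_1$-regularity of $u$ is really used. Your proposal is missing both this use of the quasi-monotonicity of $f$ and this rearrangement, and without some device of that kind the proof does not close.
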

The proof of this theorem is also given in \cite{ZelDCDS}.

\section{Asymptotic compactness and attractors: the critical case}\label{s5}

In this concluding section, we establish the asymptotic compactness of the Shatah-Struwe solutions and the existence of the global attractor for the solutions semigroup $S(t)$, see \eqref{2.semigroup}, of equation \eqref{0.eqmain} in the critical quintic case. The crucial role in our proof of this fact is played by the trajectory attractor $\Cal A^{tr}$ for the Galerkin solutions of this equation and the backward regularity of solutions on it discussed in the previous section. Namely, combining the results of Section \ref{s2} on the Shatah-Struwe solutions with the trajectory attractor approach for the Galerkin solutions discussed in the previous section, we obtain the following regularity result.

\begin{proposition}\label{Prop5.attr-smooth} Let the nonlinearity $f$ satisfy \eqref{2.extradis} and \eqref{00.3} with $p=3$ and let $g\in L^2(\Omega)$. Then the trajectory attractor $\Cal A^{tr}$ of problem \eqref{0.eqmain} constructed in Theorem \ref{Th4.trattr} is generated by smooth complete solutions of \eqref{0.eqmain}, namely, for any $\xi_u\in \Cal K$,
%$$
\begin{equation}\label{5.smooth}
\xi_u(t)\in\Cal E_1,
\end{equation}
%$$
for all $t\in\R$.
\end{proposition}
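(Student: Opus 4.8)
The plan is to bridge two facts proved earlier: the backward $\Cal E_1$-regularity of every complete Galerkin trajectory (Theorem \ref{Th4.backsmooth}) and the global well-posedness and instantaneous smoothing of Shatah-Struwe solutions (Theorems \ref{Th2.SSgwp}, \ref{Th4.uni} and Proposition \ref{Prop2.smooth}). First I would fix $\xi_u\in\Cal K$ and use Theorem \ref{Th4.backsmooth} to pick a time $T=T_u$ with $\xi_u\in C_b((-\infty,T], \Cal E_1)$, so that in particular $\xi_u(T_0)\in\Cal E_1$ for every $T_0<T$. Since \eqref{00.3} with $p=3$ implies \eqref{1.f5}, \eqref{2.fdis} and \eqref{2.technical}, Theorem \ref{Th2.SSgwp} yields a unique global (forward) Shatah-Struwe solution $v$ of \eqref{0.eqmain} issued from $\xi_u(T_0)$ at time $T_0$, and Proposition \ref{Prop2.smooth} upgrades it to $\xi_v(t)\in\Cal E_1$ for all $t\ge T_0$. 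I then glue, setting $\tilde v(t):=u(t)$ for $t\le T_0$ and $\tilde v(t):=v(t)$ for $t\ge T_0$; this is easily seen to be a complete weak solution of \eqref{0.eqmain}, and by construction $\tilde v\in C_b((-\infty,S], \Cal E_1)$ for every finite $S$.

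The core of the argument is to show that $\tilde v\in\Cal K$, i.e. that $\tilde v$ is a complete Galerkin solution in the sense of Theorem \ref{Th4.trattr}. For each $a<T_0$ the embedding $H^2\subset L^{12}$ together with $\xi_u\in C_b((-\infty,T], \Cal E_1)$ shows that $u|_{[a,T_0]}$ is a Shatah-Struwe solution with initial datum $\xi_u(a)$; hence, by uniqueness of Shatah-Struwe solutions (Corollary \ref{Cor2.uni}) and their semigroup property, the global Shatah-Struwe solution issued from $\xi_u(a)$ coincides with $u$ on $[a,T_0]$, passes through $\xi_u(T_0)$ at time $T_0$, and therefore coincides with $\tilde v$ on the whole half-line $[a,\infty)$. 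By Corollary \ref{Cor2.SSGalerkin} this Shatah-Struwe solution is the strong $\Cal E$-limit of the Galerkin approximations of \eqref{1.galerkin} on $[a,\infty)$ with data $P_N\xi_u(a)\to\xi_u(a)$. A diagonal choice $a=a_k\to-\infty$, $N=N_k\to\infty$ — legitimate since all trajectories in $\Cal K$ are uniformly bounded in the energy space (cf. \eqref{4.energyN}, \eqref{4.34}), so $\|P_{N_k}\xi_u(a_k)\|_{\Cal E}\le C$ — then exhibits $\tilde v$ as a Galerkin limit with the structure \eqref{4.42}, which means $\tilde v\in\Cal K$.

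Once this is established the proof closes immediately by the weak-strong uniqueness principle. For every finite $S$ the trajectory $\tilde v$ satisfies \eqref{4.bsm} on $(-\infty,S]$ and agrees with $u$ on $(-\infty,T_0]$ with $T_0<S$; Theorem \ref{Th4.uni}, applied with $\tilde v$ in the role of the regular trajectory and $u$ in that of the other complete trajectory, forces $\xi_u(t)=\xi_{\tilde v}(t)$ for all $t\le S$. Letting $S\to+\infty$ we conclude $\xi_u(t)=\xi_{\tilde v}(t)\in\Cal E_1$ for all $t\in\R$, which is exactly \eqref{5.smooth}.

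The hard part is the middle step, $\tilde v\in\Cal K$. Since the uniqueness of Galerkin solutions is unknown in the quintic case, one cannot merely say that $\tilde v$ and $u$ solve the same Galerkin problem and so agree; instead one must genuinely use that on $(-\infty,T]$ the trajectory $u$ is a Shatah-Struwe solution and hence is recovered as a \emph{strong} Galerkin limit on compact time intervals (equivalently, there the $M$-energy coincides with the usual energy, cf. Remark \ref{Rem4.M}). It is precisely this strong convergence at the matching time $T_0$ that lets the continued Galerkin approximations converge to $v$ rather than to another, a priori admissible, Galerkin solution, and thereby places the glued trajectory $\tilde v$ in the trajectory attractor class.
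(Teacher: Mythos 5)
Your argument is correct and follows essentially the same route as the paper: backward $\Cal E_1$-regularity (Theorem \ref{Th4.backsmooth}), forward extension by the global Shatah--Struwe solution with $\Cal E_1$-propagation (Theorem \ref{Th2.SSgwp}, Proposition \ref{Prop2.smooth}), membership of the glued trajectory in $\Cal K$ via the strong Galerkin convergence of Corollary \ref{Cor2.SSGalerkin}, and conclusion by the weak--strong uniqueness Theorem \ref{Th4.uni}. The only difference is that you spell out explicitly (via the diagonal choice $a_k\to-\infty$, $N_k\to\infty$) the step that the paper handles by invoking the identity \eqref{4.Mgood} and the definition of $\Cal K$.
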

\begin{proof} Indeed, due to Theorem \ref{Th4.backsmooth}, we know that $\xi_u(t)\in\Cal E_1$ for all $t\le T$. Moreover, due to Theorem \ref{Th2.SSgwp} there is an extension $\bar u(t)$ for $t\ge T$ such that $\bar u(t)=u(t)$ for $t\le T$ and $\bar u(t)$ is a Shatah-Struwe solution of equation \eqref{0.eqmain} for all $t\in\R$. Then, due to Proposition \ref{Prop2.smooth} and the fact that $\xi_{\bar u}(T)\in\Cal E_1$, we conclude that $\xi_{\bar u}(t)\in\Cal E_1$ for all $t\in\R$.
\par
Furthermore, due to Corollary \ref{Cor2.SSGalerkin}, any Shatah-Struwe solution is a Galerkin solution as well and the Galerkin approximations converge even {\it strongly} in $\Cal E$ to that solution. By this reason, the modified energy coincides with the usual one (i.e., identity \eqref{4.Mgood} holds) for any Shatah-Struwe solution and this, together with the definition of the set $\Cal K$ implies that $\bar u\in\Cal K$.
Finally, due to the uniqueness Theorem \ref{Th4.uni}, $u(t)=\bar u(t)$ for all $t\in\R$. This gives \eqref{5.smooth} and finishes the proof of the proposition.
\end{proof}
\begin{remark}\label{Rem5.energy} Note that, at this stage we have established only the
 $\Cal E_1$-regularity of any solution $u\in\Cal K$ and the global boundedness of $\Cal K$ in $C_b(\R,\Cal E)$ (due to the energy estimate).
 However, since we do not control the growth rate of the Strichartz norm with respect to $T$ in estimate \eqref{2.badSS}, we still do not have boundedness of $\xi_u(t)$ as $t\to\infty$ in the $\Cal E_1$-norm. Nevertheless, we obviously have the energy {\it equality} for every $u\in\Cal K$. This, together with the standard energy method will allow us to establish the asymptotic compactness which a posteriori will give us the desired control of the Strichartz norm and, finally, we will verify that $\Cal K$ is bounded in $C_b(\R,\Cal E_1)$ as well.
\end{remark}
The next theorem can be considered as the main result of this section.

\begin{theorem}\label{Th5.main} Let the assumptions of Proposition \ref{Prop5.attr-smooth} hold. Then, the solution semigroup
 $S(t)$ generated by the Shatah-Struwe solutions of equation \eqref{0.eqmain} possesses a global attractor $\Cal A$ in the space $\Cal E$ (see Definition \ref{Def3.attr}) which is a subset of  $\Cal E_1$. Moreover,
 %$$
 \begin{equation}\label{5.gl-tr}
 \Cal A=\Pi_{t=0}\Cal A^{tr},
 \end{equation}
 %$$
 where $\Cal A^{tr}$ is a trajectory attractor of equation \eqref{0.eqmain} constructed in Theorem \ref{Th4.trattr} (based on the Galerkin solutions of equation \eqref{0.eqmain}).
\end{theorem}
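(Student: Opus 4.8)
The plan is to establish the existence of the global attractor $\Cal A$ via the so-called energy method (see \cite{ball,rosa}), using the smoothness of the trajectory attractor $\Cal A^{tr}$ obtained in Proposition \ref{Prop5.attr-smooth} as the key input. First I would record the structural facts already in hand: by Theorem \ref{Th2.SSgwp} the semigroup $S(t):\Cal E\to\Cal E$ is well-defined, continuous for each fixed $t$, and dissipative, i.e. there is a bounded absorbing set $\Cal B\subset\Cal E$ (any ball of radius $2Q(\|g\|_{L^2})$ will do by \eqref{2.disSS}). Moreover, every Shatah-Struwe solution satisfies the energy equality \eqref{2.energy} by Proposition \ref{Prop2.eneq}, and — by Corollary \ref{Cor2.SSGalerkin} — is a Galerkin solution, so the complete bounded orbits of $S(t)$ are precisely the restrictions to $t\ge0$ of elements of $\Cal K$. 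Consequently the natural candidate for the attractor is $\Cal A:=\Pi_{t=0}\Cal K=\{\xi_u(0):\xi_u\in\Cal K\}$, which by Proposition \ref{Prop5.attr-smooth} is a subset of the regular space $\Cal E_1$; the real work is to show this set is compact in $\Cal E$ and attracting.

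Next I would verify \textbf{asymptotic compactness}: if $\xi_n\in\Cal B$ and $t_n\to\infty$, then $\{S(t_n)\xi_n\}$ is precompact in $\Cal E$. Using the splitting $u=v+w$ as in Section \ref{s3} but now in the critical case, the linear part $v$ decays exponentially in $\Cal E$ by \eqref{1.disstr}, so it suffices to treat the nonlinear remainder $w$, or rather to argue directly with the energy equality. The clean way is Ball's energy method: by dissipativity we may pass to a subsequence so that $S(t_n)\xi_n\rightharpoonup\zeta$ weakly in $\Cal E$; translating in time and using a diagonal argument together with the uniform energy bound, the solutions through $S(t_n-s)\xi_n$ converge (weakly-$*$ in $L^\infty_{loc}(\R,\Cal E)$, and in $C_{loc}(\R,\Cal E_{-1})$) to a complete bounded trajectory, which is therefore a Galerkin solution lying in $\Cal K$; hence $\zeta\in\Cal A$ and in particular $\zeta\in\Cal E_1$. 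To upgrade weak convergence to strong convergence in $\Cal E$ one uses the energy equality exactly as in the proof of Corollary \ref{Cor2.SSGalerkin}: the energy functional $E(u)$ is lower semicontinuous with respect to weak $\Cal E$-convergence (by \eqref{2.weakxi} for the quadratic part and by Fatou via \eqref{2.extradis}(1) for the $F$-term), while the energy equality forces $\limsup E(S(t_n)\xi_n)\le E(\zeta)$; combining the two gives convergence of the energy, hence of $\|\xi\|_{\Cal E}$, hence strong convergence. This asymptotic compactness, together with the existence of the bounded absorbing set $\Cal B$ and the continuity of $S(t)$, is exactly the hypothesis of the abstract attractor existence theorem (see \cite{BV,tem}), so $S(t)$ possesses a compact global attractor $\Cal A$ in $\Cal E$.

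It then remains to identify $\Cal A$ with $\Pi_{t=0}\Cal K=\Pi_{t=0}\Cal A^{tr}$ and to deduce the $\Cal E_1$-regularity. The inclusion $\Cal A\subset\Pi_{t=0}\Cal K$ holds because the attractor consists of the endpoints $\xi_u(0)$ of complete bounded orbits of $S(t)$, and each such orbit is a Shatah-Struwe solution on all of $\R$, hence a Galerkin solution lying in $\Cal K$. Conversely, given $\xi_u\in\Cal K$, by Proposition \ref{Prop5.attr-smooth} and the weak-strong uniqueness (Theorem \ref{Th4.uni}) it coincides with a globally defined Shatah-Struwe solution, so $\xi_u(0)$ lies on a complete bounded orbit of $S(t)$ and therefore in $\Cal A$; this gives \eqref{5.gl-tr}. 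Finally, since $\Cal A$ is compact in $\Cal E$ and, by Proposition \ref{Prop5.attr-smooth}, every point of $\Cal A$ lies in $\Cal E_1$, the strict invariance $S(t)\Cal A=\Cal A$ and the $\Cal E_1$-estimate \eqref{2.maximal} (uniform over the compact set $\Cal A$, using \eqref{4.bbb} for the backward-in-time bound and Proposition \ref{Prop2.smooth} to propagate it forward) show that $\Cal A$ is a bounded subset of $\Cal E_1$, as claimed; a posteriori this also yields the missing uniform control of the Strichartz norm \eqref{2.badSS} on the attractor.

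I expect the \textbf{main obstacle} to be the passage from weak to strong convergence in the asymptotic compactness step, i.e. making the energy-method argument fully rigorous in the quintic critical setting where a priori one does not control the Strichartz norm uniformly in time. The delicate point is that, when extracting the limiting complete trajectory, one must know it is a \emph{Shatah-Struwe} solution (so that it satisfies the energy \emph{equality} and lies in $\Cal K$) rather than merely a weak solution; this is precisely where Proposition \ref{Prop5.attr-smooth}, together with the uniqueness Corollary \ref{Cor2.uni} and Theorem \ref{Th4.uni}, must be invoked, and one has to be careful that the diagonal/translation argument produces a limit that is genuinely in $\Cal K$ and not just an arbitrary weak energy solution. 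Everything else — the absorbing set, the continuity of $S(t)$, the lower semicontinuity of the energy, and the bootstrap to $\Cal E_1$ — is routine given the results already established in Sections \ref{s2} and \ref{s4}.
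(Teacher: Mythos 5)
Your overall strategy is the same as the paper's: asymptotic compactness via the Ball/Moise--Rosa--Wang energy method, with the smoothness of $\Cal K$ from Proposition \ref{Prop5.attr-smooth} (hence the energy equality on the limit trajectory and the weak--strong uniqueness) as the crucial input, and the identification $\Cal A=\Pi_{t=0}\Cal K$ exactly as in the paper. However, the pivotal step is stated in a form that does not work. From the plain energy equality on $[-t_n,0]$ you only get $E(u_n(0))+\gamma\int_{-t_n}^0\|\Dt u_n(s)\|^2_{L^2}\,ds=E(\xi_n)$; since $E(\xi_n)$ is merely bounded (not small) and the dissipation integral is only weakly lower semicontinuous (the inequality points the wrong way for your purpose), this does \emph{not} yield $\limsup_{n} E(S(t_n)\xi_n)\le E(\zeta)$. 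The situation is not ``exactly as in the proof of Corollary \ref{Cor2.SSGalerkin}'': there the approximating data converge strongly at a fixed initial time, whereas here the initial time recedes to $-\infty$. What is needed --- and what the paper does --- is the perturbed functional obtained by multiplying by $\Dt u_n+\alpha u_n$, which gives the identity \eqref{5.ena} for $E_\alpha(u_n)$ with a damping term $\kappa E_\alpha(u_n)$, where $\alpha,\kappa$ are chosen so that $G_\alpha$ is positive definite and $4\kappa\le\alpha$ makes $\Phi_\alpha$ bounded below via \eqref{2.extradis}. Integrating against the weight $e^{\kappa s}$ over $[-t_n,0]$, the far-past contribution enters as $E_\alpha(\xi_n)e^{-\kappa t_n}\to0$; Fatou and weak lower semicontinuity then give \eqref{5.en-lim}, and only at that point does the energy \emph{equality} for the smooth limit $u\in\Cal K$ (Proposition \ref{Prop5.attr-smooth}) force all the inequalities to be equalities, yielding convergence of the norms and hence strong convergence. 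Without this exponential-weight device (or an equivalent one) the weak-to-strong upgrade, which you yourself single out as the crux, is not justified.

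A smaller point: the theorem only asserts $\Cal A\subset\Cal E_1$, which follows at once from $\Cal A=\Pi_{t=0}\Cal K$ and Proposition \ref{Prop5.attr-smooth}. Your additional claim that $\Cal A$ is \emph{bounded} in $\Cal E_1$ does not follow from Proposition \ref{Prop2.smooth} and \eqref{2.maximal} alone, because the forward-propagation constant there depends on $\int_0^T\|u(s)\|^4_{L^{12}}\,ds$, which is not uniformly controlled on $\Cal A$ at this stage (estimate \eqref{2.badSS} gives no uniform bound in $T$ or in the initial data). In the paper this boundedness is a separate subsequent theorem, proved only after the compactness of $\Cal K$ in the Strichartz norm (Corollary \ref{Cor5.stri}) has been extracted a posteriori from the compact attractor constructed here.
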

\begin{proof} Indeed, due to estimate \eqref{2.disSS}, the ball
$$
B:=\{\xi\in\Cal E,\ \|\xi\|_{\Cal E}\le R\}
$$
is an absorbing ball for the semigroup $S(t)$ in $\Cal E$ and, arguing as in Corollary \ref{Cor2.uni}, we see that the semigroup $S(t)$ is
continuous in $\Cal E$ for every fixed $t$. Thus, according to the abstract attractor existence theorem (see \cite{BV,hale,MirZel}), we only need to verify the asymptotic compactness of the semigroup $S(t)$. Namely, we need to check that, for every sequence $\xi_n\in B$ and every sequence of times $t_n\to\infty$, the sequence
$S(t_n)\xi_n$ is precompact in $\Cal E$, i.e., that there exists a subsequence $n_k$ such that
%$$
\begin{equation}\label{5.strong}
S(t_{n_k})\xi_{n_k}\to\xi_\infty
\end{equation}
%$$
strongly in $\Cal E$.
\par
To prove the strong convergence we will utilize the so-called energy method, see e.g., \cite{rosa,ball}. We start with the elementary observation that, without loss of generality, we may assume that $S(t_n)\xi_n\to\xi_\infty$ {\it weakly} in $\Cal E$. This follows from the fact that the sequence  $S(t_n)\xi_n$ is bounded due to energy estimate and the Banach-Alaoglu theorem. Let us denote by $v_n(t):=S(t)\xi_n$ the corresponding Shatah-Struwe solutions of equation \eqref{0.eqmain} and fix $u_n:=T_{t_n}v_n$. Then, $u_n(t)$ are also Shatah-Struwe solutions of equation \eqref{0.eqmain} defined on time interval $t\in[-t_n,\infty)$ and
$$
\xi_{u_n}(0)\rightharpoonup\xi_\infty
$$
weakly in $\Cal E$. Since every Shatah-Struwe solution is a Galerkin solution and the $M$-energy of them coincide with the usual energy, by the definition of the trajectory attractor $\Cal A^{tr}$, we may assume without loss of generality that $\xi_{u_n}$ is weakly-star  convergent to some
Galerkin solution $\xi_u(t)\in\Cal A^{tr}$ in the space $L^\infty_{loc}(\R_+,\Cal E)$. Moreover, if we extend the functions $\xi_{u_n}(t)$, say, by zero for $t\le-t_n$, we also may assume that
$$
\xi_{u_n}(t)\to\xi_u(t), \ \ \text{ weakly star in } L^\infty_{loc}(\R,\Cal E)
$$
and that $\xi_u\in\Cal K$ with $\xi_u(0)=\xi_\infty$, see \cite{ZelDCDS} for the details.
\par
Multiplying now equations \eqref{0.eqmain} for $u_n$ by $\Dt u_n+\alpha u_n$, where $\alpha>0$ will be fixed below,
we end up with the following energy type identity:
%$$
\begin{equation}\label{5.ena}
\frac d{dt}E_\alpha(u_n)+\kappa E_\alpha(u_n)+G_\alpha(u_n)+(\Phi_\alpha(u_n),1)+(g_\alpha, u_n)=0,
\end{equation}
%$$
where $\kappa>0$ is a parameter, $g_\alpha=(\kappa-\alpha)g$, $\Phi_\alpha(u):=\alpha f(u)u-\kappa F(u)$,
$$
E_\alpha(u):=\frac12\|\xi_u\|^2_{\Cal E}+(F(u),1)-(g,u)+\alpha(u,\Dt u)+\frac12\alpha\gamma\|u\|^2_{L^2}
$$
and
$$
G_\alpha(u):=(\gamma-\alpha-\frac\kappa2)\|\Dt u\|^2_{L^2}+\(\alpha-\frac\kappa2\)\|\Nx u\|^2_{L^2}-\kappa\alpha(u,\Dt u)-\frac{\gamma\alpha\kappa}2\|u\|^2_{L^2}.
$$
We recall that the above calculations are justified since any Shatah-Struwe solution satisfies the energy equality. We now fix the positive constants $\alpha$ and $\kappa$ to be small enough that the quadratic form $G_\alpha$ is positive definite:
$$
K_1\|\xi_u\|^2_{\Cal E}\le G_\alpha(u)\le K_2\|\xi_u\|^2_{\Cal E}
$$
for some positive $K_1$ and $K_2$. We also assume that $4\kappa\le \alpha$ which guarantees that
$$
\Phi_\alpha(u)\ge-C,
$$
due to assumption \eqref{2.extradis}. Integrating now equality \eqref{5.ena} with respect to $t\in[-t_n,0]$, we arrive at
%$$
\begin{equation}\label{5.en-n}
E_\alpha(u_n(0))+\int_{-t_n}^0e^{\kappa s}\(G_\alpha(u_n(s))+(\Phi_\alpha(u_n(s)),1)+(g_\alpha,u_n(s))\)\,ds=E_\alpha(\xi_n)e^{-\kappa t_n}.
\end{equation}
%$$
 We want now  to pass to the limit $n\to\infty$ in equality \eqref{5.en-n}. To this end, we remind that $\xi_{u_n}$ is uniformly bounded in $L^\infty(\R_-,\Cal E)$ and is weakly-star convergent in this space to the solution $\xi_u\in\Cal K$. Moreover, we also know that $\xi_n(0)\to\xi_\infty=\xi_u(0)$ weakly in $\Cal E$. Using the compactness of the embedding $C_{loc}(\R_-,\Cal E)\subset C_{loc}(\R_-,L^2(\Omega))$, we also conclude that $u_n\to u$ {\it strongly} in $C_{loc}(\R_-,L^2(\Omega))$ and, in particular, almost everywhere. Therefore, since $\Phi_\alpha(u)$ is bounded from below and the quadratic form $G_\alpha(u)$ is positive definite, using also the Fatou lemma, we conclude that
%$$
\begin{multline*}
\liminf_{n\to\infty}\int_{-t_n}^0e^{\kappa s}\(G_\alpha(u_n(s))+(\Phi_\alpha(u_n(s)),1)+(g_\alpha,u_n(s))\)\,ds\ge\\\ge \int_{-\infty}^0e^{\kappa s}\(G_\alpha(u(s))+(\Phi_\alpha(u(s)),1)+(g_\alpha,u(s))\)\,ds
\end{multline*}
%$$
and, analogously,
%$$
\begin{equation}\label{5.wrong}
\liminf_{n\to\infty}E_\alpha(u_n(0))\ge E_\alpha(u(0)).
\end{equation}
%$$
Thus, taking into the account that $\xi_n$ is uniformly bounded in $\Cal E$, we end up with
%$$
\begin{equation}\label{5.en-lim}
E_\alpha(u(0))+\int_{-\infty}^0e^{\kappa s}\(G_\alpha(u(s))+(\Phi_\alpha(u(s)),1)+(g_\alpha,u(s))\)\,ds\le0.
\end{equation}
%$$
We now recall that $u\in\Cal K$, so, by Proposition \ref{Prop5.attr-smooth}, $u$ is smooth and, therefore, it satisfies the energy {\it equality}. Thus, repeating the derivation of \eqref{5.en-n}, but for the function $u$, we see that the last inequality is actually the equality. This is possible only if \eqref{5.wrong} is actually {\it equality}. Using now that, due to the Fatou lemma
$$
\liminf_{n\to\infty}(F(u_n(0)),1)\ge (F(u(0)),1)\ \text{and}\ \liminf_{n\to\infty}\|\xi_{u_n}(0)\|^2_{\Cal E}\ge\|\xi_u(0)\|^2_{\Cal E},
$$
we see that
$$
\|\xi_u(0)\|^2_{\Cal E}=\liminf_{n\to\infty}\|\xi_{u_n}(0)\|^2_{\Cal E}.
$$
Thus, since $\xi_{u_n}(0)\rightharpoonup\xi_{u}(0)$, we may assume without loss of generality that
$$
S(t_n)\xi_n=\xi_{u_n}(0)\to\xi_\infty=\xi_u(0)
$$
strongly in $\Cal E$. This proves the desired asymptotic compactness of the semigroup $S(t)$.
\par
Thus, by the abstract attractor existence theorem, there exists a global attractor $\Cal A$ for the semigroup $S(t)$ associated with equation \eqref{0.eqmain} and, obviously,
$$
\Cal A\subset\Pi_{t=0}\Cal K.
$$
The opposite inclusion follows from the fact that $\Cal K$ consists of smooth solutions which are the Shatah-Struwe ones. So, the equality \eqref{5.gl-tr} is also proved  and the theorem is proved.
\end{proof}
We now want to verify that the constructed attractor is {\it bounded} in $\Cal E_1$. To this end, we need the following result.

\begin{corollary}\label{Cor5.stri} Let the assumptions of Theorem \ref{Th5.main} hold. Then, the restriction of the trajectory set
$\Cal K$ to the time interval $t\in[0,1]$ is a compact set of $L^4(0,1;L^{12}(\Omega))$:
$$
\Cal K\big|_{t\in[0,1]}\subset\subset L^4(0,1;L^{12}(\Omega)).
$$
\end{corollary}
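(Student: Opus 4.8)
The plan is to exploit the key structural facts established so far: every element of $\Cal K$ is a smooth complete solution (Proposition~\ref{Prop5.attr-smooth}), hence a Shatah--Struwe solution satisfying the energy equality, and $\Cal K$ is bounded in $C_b(\R,\Cal E)$ by the energy estimate~\eqref{2.disest}. Compactness of $\Cal K\big|_{[0,1]}$ in $L^4(0,1;L^{12}(\Omega))$ will follow from the Hausdorff criterium once I show that from any sequence $u_n\in\Cal K$ one can extract a subsequence converging strongly in $L^4(0,1;L^{12}(\Omega))$.

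First I would invoke the trajectory attractor structure: since $\Cal A^{tr}$ is compact in $\Theta^+=[L^\infty_{loc}(\R_+,\Cal E)]^{w^*}$ and (extending to the whole line as in Section~\ref{s4}) $\Cal K$ is compact in $\Theta$, any sequence $u_n\in\Cal K$ has a subsequence (not relabeled) with $\xi_{u_n}\to\xi_u$ weak-star in $L^\infty_{loc}(\R,\Cal E)$ for some $u\in\Cal K$. By the compact embedding $C_{loc}(\R,\Cal E)\subset C_{loc}(\R,L^2(\Omega))$ (arguing via $\Dt^2 u\in L^\infty_{loc}(H^{-1})$ as in Definition~\ref{Def1.simple}), $u_n\to u$ strongly in $C([0,1];L^2(\Omega))$, hence a.e.\ on $[0,1]\times\Omega$. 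Then I would run the same energy-method argument as in the proof of Theorem~\ref{Th5.main}, but on the finite interval $[0,1]$ instead of $(-\infty,0]$: multiplying \eqref{0.eqmain} for $u_n$ by $\Dt u_n+\alpha u_n$, integrating over $[0,1]$, using that $\xi_{u_n}(0)\rightharpoonup\xi_u(0)$ (which holds along a further subsequence since $\Cal K$ is bounded in $\Cal E$ pointwise), Fatou's lemma for the terms bounded below, and the energy equality for the limit $u$, I would obtain $\|\xi_{u_n}(1)\|_{\Cal E}\to\|\xi_u(1)\|_{\Cal E}$ and in fact $\xi_{u_n}(t)\to\xi_u(t)$ strongly in $\Cal E$ for every $t\in[0,1]$; combined with the energy equality this upgrades to $\xi_{u_n}\to\xi_u$ strongly in $C([0,1];\Cal E)$, i.e.\ in $L^\infty(0,1;\Cal E)$.

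With strong convergence in $L^\infty(0,1;\Cal E)$ in hand, the remaining task is to pass from energy-space convergence to Strichartz-norm convergence. Here I would set $w_n:=u_n-u$, which solves a linear equation $\Dt^2 w_n+\gamma\Dt w_n-\Dx w_n=g-f(u_n)-(g-f(u))= f(u)-f(u_n)=:G_n$ with $\xi_{w_n}(0)\to0$ in $\Cal E$. By the Strichartz estimate~\eqref{1.linstr} (or its dissipative form~\eqref{1.disstr}),
$$
\|w_n\|_{L^4(0,1;L^{12}(\Omega))}\le C\(\|\xi_{w_n}(0)\|_{\Cal E}+\|f(u_n)-f(u)\|_{L^1(0,1;L^2(\Omega))}\).
$$
The first term tends to zero. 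For the second, using $|f(u_n)-f(u)|\le C(1+|u_n|^4+|u|^4)|u_n-u|$, Hölder and the Sobolev embedding $H^1_0\subset L^6$,
$$
\|f(u_n)-f(u)\|_{L^1(0,1;L^2)}\le C\(1+\|u_n\|_{L^4(0,1;L^{12})}^4+\|u\|_{L^4(0,1;L^{12})}^4\)\|u_n-u\|_{L^\infty(0,1;L^6)}.
$$
Since $u,u_n\in\Cal K$ and $\Cal K$ consists of Shatah--Struwe solutions with $\Cal E_1$-regularity, the $L^4(0,1;L^{12})$-norms are uniformly bounded (indeed $H^2\subset L^{12}$ in $3$D gives the bound directly from $\|\xi_u\|_{\Cal E_1}\le C$), while $\|u_n-u\|_{L^\infty(0,1;L^6)}\to0$ by the strong $\Cal E$-convergence and $H^1_0\subset L^6$. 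Hence $w_n\to0$ in $L^4(0,1;L^{12}(\Omega))$, giving the claimed precompactness.

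The main obstacle is getting the uniform bound on the $L^4(0,1;L^{12})$-norms of elements of $\Cal K$: Theorem~\ref{Th2.SSgwp} only provides a bound depending uncontrollably on the initial data and on $T$, so one cannot use it directly. The point is that Proposition~\ref{Prop5.attr-smooth} already bought us $\Cal E_1$-regularity of every $u\in\Cal K$, and one still needs the \emph{uniform} bound $\|\xi_u\|_{C_b(\R,\Cal E_1)}\le C$ independent of $u\in\Cal K$ --- which is precisely where the backward-regularity estimate~\eqref{4.bbb} of Theorem~\ref{Th4.backsmooth}, together with the fact that $\Cal K$ is shift-invariant and the $\Cal E_1$-estimate of Proposition~\ref{Prop2.smooth} propagates forward with constants controlled by the (uniformly bounded) energy, must be combined carefully. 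Once this uniform $\Cal E_1$-bound is established, $H^2(\Omega)\subset L^{12}(\Omega)$ in dimension three immediately yields a uniform $L^\infty(\R;L^{12})$-bound, hence a uniform $L^4(0,1;L^{12})$-bound, closing the argument.
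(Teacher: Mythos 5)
Your plan has a genuine gap at exactly the point you flag as ``the main obstacle'': the uniform bound on $\|u\|_{L^4(0,1;L^{12}(\Omega))}$ over $u\in\Cal K$, without which the H\"older estimate for $\|f(u_n)-f(u)\|_{L^1(0,1;L^2)}$ carries an uncontrolled factor. The route you sketch for it --- a uniform $C_b(\R,\Cal E_1)$ bound on $\Cal K$ obtained from Theorem \ref{Th4.backsmooth} and then propagated forward by Proposition \ref{Prop2.smooth} --- does not close and is in fact circular relative to the paper's logic. In Theorem \ref{Th4.backsmooth} the bound \eqref{4.bbb} holds only for $t\le T_u$ with $T_u$ depending on the individual trajectory, and shift invariance of $\Cal K$ does not transport it to the fixed window $[0,1]$ uniformly in $u$. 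Forward propagation via Proposition \ref{Prop2.smooth} does not help: its Gronwall constant is $\exp\bigl(CT+\int_0^T\|u(s)\|^4_{L^{12}}\,ds\bigr)$, i.e.\ it is controlled by the Strichartz norm rather than by the energy --- precisely the quantity you are trying to bound (cf.\ Remark \ref{Rem2.problems} and Remark \ref{Rem5.energy}, and note that \eqref{2.badSS} gives no uniformity in $T$ or in the data). Indeed, the uniform $\Cal E_1$-boundedness of $\Cal K$ is the content of the paper's final theorem, whose proof \emph{uses} the present corollary, so it cannot be invoked here.

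The missing idea is the one the paper uses to get the uniform Strichartz bound \eqref{5.str}: since Theorem \ref{Th5.main} has already produced a compact attractor $\Cal A$ in $\Cal E$ and $\Cal K\big|_{t=0}\subset\Cal A$, one reruns the local existence argument of Proposition \ref{Prop2.exist} with initial data ranging over the compact set $\Cal A$; Corollary \ref{Cor1.comp} then makes the linear part's Strichartz norm uniformly small on a uniform interval $[0,T(\eb)]$, giving $\|u\|_{L^4(0,T(\eb);L^{12})}\le\eb$ for every trajectory starting on the attractor, and the shift invariance $T_h\Cal K=\Cal K$ upgrades this to $\sup_{T\in\R}\|u\|_{L^4(T,T+1;L^{12})}\le C$ uniformly over $\Cal K$. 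With that bound in hand your concluding Strichartz step is essentially the paper's own argument: the paper proves Lipschitz continuity of the map $\xi_u(0)\mapsto u$ from $\Cal A$ to $L^4(0,1;L^{12}(\Omega))$ via the same difference estimate and concludes compactness as the continuous image of the compact set $\Cal A$; your first two paragraphs (weak-star extraction plus the energy method) only re-derive strong $\Cal E$-compactness that Theorem \ref{Th5.main} already supplies, so they could be replaced by simply selecting $\xi_{u_n}(0)\to\xi_u(0)$ strongly in $\Cal E$ along a subsequence.
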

\begin{proof} Indeed, due to Theorem \ref{Th5.main}, the attractor  $\Cal A$ is compact in $\Cal E$. Then, arguing as in the proof of Proposition \ref{Prop2.exist}, we see that, for every $\eb>0$, there exists $T=T(\eb)$ such that, for any Shatah-Struwe solution $u(t)$ starting from the attractor ($\xi_u(0)\in\Cal A$), we have
$$
\|u\|_{L^4(0,T(\eb);L^{12}(\Omega))}\le\eb
$$
or, in other words,
%$$
\begin{equation}\label{5.Kstr}
\|\Cal K\big|_{t\in[0,T(\eb)]}\|_{L^4(0,T(\eb);L^{12}(\Omega))}\le\eb.
\end{equation}
%$$
Since the set $\Cal K$ is invariant with respect to time shifts ($T_h\Cal K=\Cal K$), we have proved that, for any $u\in\Cal K$
%$$
\begin{equation}\label{5.str}
\sup_{T\in\R}\|u\|_{L^4(T,T+1;L^{12}(\Omega))}\le C,
\end{equation}
%$$
where the constant $C$ is independent of $u$.
\par
Since $\Cal A$ is compact, verifying the continuity of the solution map $S:\xi_u(0)\to u$
 as the map from $\Cal A$ to $L^4(0,1;L^{12}(\Omega))$ will prove the corollary. To this end, we first observe that using the uniform estimate \eqref{5.Kstr} and arguing as in the proof of Corollary \ref{Cor2.uni}, we see that
 %$$
 \begin{equation}\label{5.lip}
 \|\xi_{u_1}(t)-\xi_{u_2}(t)\|_{\Cal E}\le Ce^{Kt}\|\xi_{u_1}(0)-\xi_{u_2}(0)\|_{\Cal E},
 \end{equation}
 %$$
 where $C$ and $K$ are independent of $\xi_{u_i}(0)\in\Cal A$. Thus, the map $S$ is continuous as the map from $\Cal E$ to $C(0,1;\Cal E)$.
\par
To prove the continuity in the Strichartz norm, we note that analogously to \eqref{2.SSdif},
$$
\|f(u_1(t))-f(u_2(t))\|_{L^2(\Omega)}\le C(1+\|u_1(t)\|_{L^{12}(\Omega)}^4+\|u_2(t)\|^4_{L^{12}(\Omega)})\|\xi_{u_1}(t)-\xi_{u_2}(t)\|_{\Cal E}.
$$
This estimate, together with \eqref{5.lip} and \eqref{5.str}, gives
$$
\|f(u_1)-f(u_2)\|_{L^1(0,1;L^2(\Omega))}\le C\|\xi_{u_1}(0)-\xi_{u_2}(0)\|_{\Cal E},
$$
where the constant $C$ is independent of $\xi_{u_i}(0)\in\Cal A$. Applying now the Strichartz estimate \eqref{1.linstr} to
equation \eqref{2.dif}, we get
$$
\|u_1-u_2\|_{L^4(0,1;L^{12}(\Omega))}\le C\|\xi_{u_1}(0)-\xi_{u_2}(0)\|_{\Cal E}.
$$
Thus, the map $S$ is indeed continuous as a map from $\Cal E$ to $L^4(0,1;L^{12}(\Omega))$ and the corollary is proved.
\end{proof}
We are finally ready to state the result on the boundedness of the global attractor in $\Cal E_1$.

\begin{theorem} Let the assumptions of Theorem \ref{Th5.main} hold. Then the global attractor $\Cal A$ of the solution semigroup $S(t)$ associated with equation \eqref{0.eqmain} is a bounded set in $\Cal E_1$.
\end{theorem}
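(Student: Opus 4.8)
The plan is to upgrade the pointwise-in-time $\Cal E_1$-regularity of the complete trajectories on the attractor (Proposition~\ref{Prop5.attr-smooth}) to a \emph{uniform} bound. Since $\Cal A=\Pi_{t=0}\Cal K$ and $\Cal K$ is invariant under the time shifts $T_h$, it suffices to produce a constant $C$, independent of $u\in\Cal K$, with $\|\xi_u(0)\|_{\Cal E_1}\le C$. By the maximal-regularity inequality \eqref{2.maximal} from the proof of Proposition~\ref{Prop2.smooth} together with the uniform energy bound $\sup_{u\in\Cal K}\|\xi_u\|_{C_b(\R;\Cal E)}\le C$ (valid since $\Cal K\subset\Cal A^{tr}$, cf. Remark~\ref{Rem5.energy} and \eqref{2.disSS}), this reduces to a uniform bound $\|\xi_v(0)\|_{\Cal E}\le C$ for $v:=\d_t u$, where $v$ solves the differentiated equation \eqref{2.diffeq}. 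As in Proposition~\ref{Prop2.smooth}, the estimates below are formal and are to be justified through the Galerkin approximations and Corollary~\ref{Cor2.SSGalerkin}.

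Two ingredients feed the estimate for $\|\xi_v(0)\|_{\Cal E}$. First, backward smoothness: by Theorem~\ref{Th4.backsmooth} there is a time $T_u$ (depending on $u$, but with a uniform constant) such that $\|\xi_u\|_{C_b((-\infty,T_u];\Cal E_1)}\le C$; expressing $\d_t^2u$ from \eqref{0.eqmain} and using $H^2(\Omega)\subset C(\bar\Omega)$ to bound $\|f(u)\|_{L^2}$, this gives $\|\xi_v(t)\|_{\Cal E}\le C_1$ for all $t\le T_u$, with $C_1$ independent of $u$. If $T_u\ge0$ we are done, so assume $T_u<0$; the task is then to propagate this bound forward over the (possibly very long) interval $[T_u,0]$. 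Second, uniform Strichartz smallness on $\Cal K$: the argument proving Corollary~\ref{Cor5.stri} (compactness of $\Cal A$ in $\Cal E$, Corollary~\ref{Cor1.comp}, and the contraction estimate from the proof of Proposition~\ref{Prop2.exist}), combined with $T_h\Cal K=\Cal K$, shows that for every $\eb>0$ there is a length $\delta=\delta(\eb)\in(0,1]$ with $\|u\|_{L^4(T,T+\delta;L^{12})}\le\eb$ for all $T\in\R$ and all $u\in\Cal K$; thus the coefficient $\|u(t)\|^4_{L^{12}}$ occurring below, while only uniformly \emph{locally} integrable in time, has arbitrarily small integral over intervals of a \emph{fixed} length.

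I would then multiply \eqref{2.diffeq} by $\d_t v+\beta v$ for a small $\beta>0$ and bound the term $-(f'(u)v,\d_t v+\beta v)$ as in \eqref{2.03}, using the a priori bounds $\|\Nx u(t)\|_{L^2}\le C$ and $\|\d_t u(t)\|_{L^2}\le C$ available on $\Cal K$ to keep the lower-order part of the potential under control and to absorb it, together with the damping, into the dissipative part. This yields a dissipative differential inequality for $\|\xi_v(t)\|^2_{\Cal E}$ whose only dangerous coefficient is $C\|u(t)\|^4_{L^{12}}$. Integrating on $[T_u,0]$ over consecutive subintervals of the fixed length $\delta$ — on each of which $\int\|u\|^4_{L^{12}}\le\eb^4$ is as small as we wish, while the damping contributes a fixed gain — and choosing $\eb$ (hence $\delta$) small enough, a Gronwall-type argument for kernels with uniformly small integral over fixed-length intervals gives $\|\xi_v(0)\|_{\Cal E}\le C\,\|\xi_v(T_u)\|_{\Cal E}\le CC_1$ with $C$ independent of $u$ and of $T_u$. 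Substituting back into \eqref{2.maximal} yields $\|\xi_u(0)\|_{\Cal E_1}\le C$ for all $u\in\Cal K$; by shift invariance $\Cal K$ is bounded in $C_b(\R;\Cal E_1)$, hence $\Cal A=\Pi_{t=0}\Cal K$ is a bounded subset of $\Cal E_1$.

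The main obstacle is exactly this last dissipative estimate for the differentiated equation. Since $C\|u\|^4_{L^{12}}$ is neither pointwise bounded nor controlled once the Strichartz regularity is lost, a naive Gronwall inequality on $[T_u,0]$ would only give a bound blowing up like $e^{c|T_u|}$ as $T_u\to-\infty$; it is precisely the smallness of the integral of this coefficient over intervals of a fixed length — a consequence of the compactness of $\Cal A$ established in Theorem~\ref{Th5.main} — that lets the exponential decay from the damping beat the accumulated growth and close the estimate with a constant independent of the past time $T_u$. One must also ensure that the sign-indefinite potential $f'(u)$ (only bounded below, $f'(u)\ge-K$, by \eqref{00.3} and \eqref{2.technical}) does not destroy the coercivity of the energy of $v$, which is arranged — as in the energy estimate for \eqref{0.eqmain} itself — by a careful choice of the multiplier and the structural dissipativity assumptions.
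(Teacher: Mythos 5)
Your overall architecture matches the paper's: use the backward regularity \eqref{4.bbb} together with the shift invariance of $\Cal K$ to get a uniformly $\Cal E_1$-bounded starting point in the far past, then propagate forward uniformly in time by estimating $v=\Dt u$ through the differentiated equation \eqref{2.diffeq} with a multiplier of the form $\Dt v+\beta v$, exploiting the compactness of the attractor in the Strichartz norm to tame the coefficient coming from $f'(u)$, and conclude with the elliptic estimate \eqref{2.maximal}. The gap is precisely in the step you flag as the main obstacle. What you extract from compactness is: for every $\eb>0$ there is $\delta(\eb)\in(0,1]$ with $\|u\|_{L^4(T,T+\delta;L^{12})}\le\eb$ uniformly on $\Cal K$. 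But over a subinterval of length $\delta$ the damping contributes a factor $e^{-2\kappa\delta}$ --- not a ``fixed gain'', it degenerates as $\delta\to0$ --- while the dangerous term contributes $e^{C\eb^4}$. To iterate over arbitrarily many consecutive subintervals of $[T_u,0]$ and obtain a bound independent of $T_u$, you need $C\eb^4\le 2\kappa\,\delta(\eb)$, i.e.\ a quantitative lower bound on $\delta(\eb)$ in terms of $\eb$. The soft compactness (uniform integrability) argument that produces $\delta(\eb)$ gives no such rate, and the only per-unit-interval information available, \eqref{5.str}, gives $\int_t^{t+1}\|u\|^4_{L^{12}}\,dt\le C$, which is bounded but not small. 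So the Gronwall argument as described does not close: what is needed is smallness of the time average of the coefficient over intervals of a length bounded away from zero, and your preparation does not deliver it.

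The paper closes exactly this hole by a different use of compactness: since $\Cal K\big|_{[0,1]}$ is compact in $L^4(0,1;L^{12})$ (Corollary \ref{Cor5.stri}), for every $\eb>0$ one can split, uniformly in $u\in\Cal K$ and (by shift invariance) in the unit time interval, $u=\bar u+\tilde u$ with $\sup_T\|\tilde u\|_{L^4(T,T+1;L^{12})}\le\eb$ as in \eqref{5.small} and $\|\bar u(t)\|_{\Cal E_1}\le C_\eb$ as in \eqref{5.smooth1} (a finite $\eb$-net of smooth functions). Writing $f'(u)=\bigl(f'(u)-f'(\bar u)\bigr)+f'(\bar u)$ and using $H^2\subset C$, the estimate \eqref{2.03} improves to $|(f'(u)v,\Dt v)|\le l_\eb(t)\|\xi_v\|^2_{\Cal E}+C_\eb\|\Dt u\|^2_{L^2}$ with $\int_t^{t+1}l_\eb(s)\,ds\le C\eb$, see \eqref{5.l}: now the dangerous coefficient is small per interval of fixed length one, the extra term is bounded by the uniform energy bound on $\Cal K$, and the Gronwall estimate with the damping rate $\kappa$ closes for $\eb$ small, uniformly in the length of $[T_u,0]$. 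If you replace your short-interval smallness by this splitting, the rest of your argument (reduction via \eqref{4.bbb} and shift invariance, the multiplier $\Dt v+\beta v$ together with \eqref{2.technical}, and \eqref{2.maximal} at the end) goes through and essentially reproduces the paper's proof.
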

\begin{proof}Indeed, due to estimate \eqref{4.bbb} for any complete solution $u\in\Cal K$ and due to the invariance of $\Cal K$, it is sufficient to verify the following estimate:
%$$
\begin{equation}\label{5.e1}
\|\xi_u(t)\|_{\Cal E_1}\le Q(\|\xi_u(0)\|_{\Cal E_1}),\ \ t\ge0,
\end{equation}
%$$
where the monotone function $Q$ is independent of $t\ge0$ and $\xi_u(0)\in\Cal A$.
\par
We will proceed analogously to the proof of Proposition \ref{Prop2.smooth}, but will improve estimate \eqref{2.03} using the information on the compactness of $\Cal K$ in the Strichartz norm. Namely, due to that compactness and estimate \eqref{5.str}, for every $\eb>0$, we can split the solution  $u$ in a sum $u(t)=\bar u(t)+\tilde u(t)$, where
%$$
\begin{equation}\label{5.small}
\sup_{T\ge0}\|\tilde u\|_{L^4(T,T+1;L^{12}(\Omega))}\le \eb
\end{equation}
%$$
and the other function is smooth:
%$$
\begin{equation}\label{5.smooth1}
\|\bar u(t)\|_{\Cal E_1}\le C_\eb,\ \ t\ge0,
\end{equation}
%$$
where the constant $C_\eb$ depends on $\eb$, but is independent of $t$ and $u\in\Cal K$. Using this decomposition, we improve \eqref{2.03} as follows
%$$
\begin{multline}
|(f'(u)v,\Dt v)|\le (|f'(\bar u+\tilde u)-f(\bar u)|,|v|\cdot|\Dt v|)+(|f'(\bar u)|,|v|\cdot|\Dt v|)\le\\\le C((1+|\bar u|^3+|\tilde u|^3)|\tilde u|,|v|\cdot|\Dt v|)+C\|f'(\bar u)\|_{L^\infty}\|v\|_{L^2}\|\Dt v\|_{L^2}\le \\\le C(1+\|\tilde u\|^3_{L^{12}}+\| u\|^3_{L^{12}})\|\tilde u\|_{L^{12}}\|\xi_v\|^2_{\Cal E}+\eb\|\xi_v\|^2_{\Cal E}+C_\eb\|\Dt u\|^2_{L^2}=l_\eb(t)\|\xi_v\|^2_{\Cal E}+C_\eb\|\xi_u\|^2_{\Cal E},
\end{multline}
%$$
where $l_\eb(t):=\eb+C\(1+\|\tilde u\|^3_{L^{12}}+\| u\|^3_{L^{12}}\)\|\tilde u\|_{L^{12}})$. Then, due to \eqref{5.str} and \eqref{5.small}, we have
%$$
\begin{equation}\label{5.l}
\int_t^{t+1}l_\eb(t)\,dt\le C\eb,
\end{equation}
%$$
where the constant $C$ is independent of $\eb$ and on $u\in\Cal K$. Inserting this estimate into \eqref{2.02}, we have
%$$
\begin{equation}\label{5.better}
\frac12\frac d{dt}\|\xi_v(t)\|^2_{\Cal E}+\gamma\|\Dt v(t)\|^2_{L^2}\le l_\eb(t)\|\xi_v(t)\|^2_{\Cal E}+C_\eb\|\xi_u(t)\|^2_{\Cal E}.
\end{equation}
%$$
Multiplying now equation \eqref{2.diffeq} by $\alpha v$, where $\alpha>0$ is a small parameter, integrating over $\Omega$ and using \eqref{2.technical}, we derive
$$
\frac d{dt}(\alpha(v(t),\Dt v(t))+\frac12\alpha\gamma\|v(t)\|^2_{L^2})+\alpha\|\Nx v(t)\|^2_{L^2}\le K\alpha\|\xi_u(t)\|^2_{L^2}+\alpha\|\Dt v\|^2.
$$
Taking a sum of this inequality with \eqref{5.better} and fixing $\alpha>0$ to be small enough, we finally arrive at
$$
\frac d{dt}\(\frac12\|\xi_v(t)\|^2_{\Cal E}+\alpha(v(t),\Dt v(t))+\frac12\alpha\gamma\|v(t)\|^2_{L^2}\)+(\kappa-l_\eb(t))\|\xi_v(t)\|^2_{\Cal E}\le K\alpha\|\xi_u(t)\|^2_{\Cal E}
$$
for some positive constant $\kappa$ which is independent of $\eb$ and $u$. Fixing now $\eb>0$ to be small enough,
applying the Gronwall inequality and estimating the term containing $l_\eb(t)$ using \eqref{5.l}, we get
$$
\|\xi_v(t)\|^2_{\Cal E}\le Ce^{-\kappa t}\|\xi_v(0)\|^2_{\Cal E}+C\|\xi_u\|^2_{C(\R_+,\Cal E)}\le C\(\|\xi_v(0)\|^2_{\Cal E}+1\).
$$
Estimate \eqref{2.maximal} gives now the desired estimate \eqref{5.e1} and finishes the proof of the theorem.
\end{proof}
\begin{remark}\label{Rem5.problems} Since $H^2\subset C$ in the 3D case,
the proved boundedness of the global attractor $\Cal A$ in the space $\Cal E_1$ allows us to verify the further regularity of the attractor by straightforward bootstrapping, so, similarly to the subcritical case, the actual regularity of the attractor is restricted by the regularity of $f$ and $g$ only. Moreover, the finite-dimensionality of $\Cal A$ can be obtained also exactly as in the subcritical case.
\par
However, we emphasize that, in contrast to the subcritical case, our proof of the existence of the global attractor $\Cal A$ and its further regularity is strongly based on the gradient structure of equation \eqref{0.eqmain} and the finiteness of the dissipation integral \eqref{4.disint}.
Thus, the extension of the results of this section to the case of non-autonomous external forces $g=g(t)$ or to systems of equations of the form \eqref{0.eqmain} with non-gradient nonlinearity $f$ is still an open problem. As we have already mentioned,
the key difficulty in this problem is to establish the dissipative estimate for the Strichartz norm of any Shatah-Struwe solution $u$ of the form
%$$
\begin{equation}\label{5.key}
\|u\|_{L^4(T,T+1;L^{12}(\Omega))}\le Q(\|\xi_u(0)\|_{\Cal E})e^{-\alpha T}+Q(\|g\|_{L^2}).
\end{equation}
%$$
This estimate cannot be obtained directly from the proof of Theorem \ref{Th2.SSgwp} and we do not know whether or not it is actually true even in the autonomous case considered in this section. Nevertheless, we conjecture that it is true at least in the autonomous case since, a posteriori, based on the existence of the compact global attractor $\Cal A$, on can verify a slightly weaker version of \eqref{5.key}, namely, that for every bounded set $B\subset\Cal E$, there exists $T=T(B)$ such that
$$
\|u\|_{L^4(t,t+1;L^{12}(\Omega))}\le Q(\|g\|_{L^2}),\ \ t\ge T,
$$
so one only needs to verify \eqref{5.key} on a finite time interval and we expect that it can be done using the concentration compactness arguments, see e.g. \cite{tao}. On the other hand, up to the moment, we do not know how to verify \eqref{5.key} in the non-autonomous case.
\end{remark}

\end{document}